\numberwithin{equation}{section}
\newtheorem{thm}[equation]{Theorem}
\newtheorem{prop}[equation]{Proposition}
\newtheorem{lem}[equation]{Lemma}
\newtheorem{cor}[equation]{Corollary}
\theoremstyle{definition}
\newtheorem{defn}[equation]{Definition}
\newtheorem*{rem}{Remark}
\newcommand{\Uu}{\mathscr{U}}
\newcommand{\rat}{\mathrm{Rat}}
\newcommand{\gal}{\mathrm{Gal}}
\newcommand{\norm}[1]{\left\Vert#1\right\Vert}
\newcommand{\abs}[1]{\left\vert#1\right\vert}
\newcommand{\cx}{{\mathbb{C}}}
\newcommand{\rl}{{\mathbb{R}}}
\newcommand{\D}{\mathbb{D}}
\newcommand{\Z}{\mathbb{Z}}
\newcommand{\N}{\mathbb{N}}
\newcommand{\Q}{\mathbb{Q}}
\newcommand{\slb}{{\mathsf{L}(B)}}
\newcommand{\cb}{\mathbb C}
\newcommand{\zb}{\mathbb Z}
\newcommand{\gdot}{\odot}
\newcommand{\one}{\mathbbm{1}}
\newcommand{\hk}{\mathscr{H}_{k}}
\newcommand{\sk}{\mathscr{S}_{k}}
\newcommand{\ol}{\overline}
\newcommand{\dbar}{\ol\partial}
\newcommand{\wt}{\widetilde}
\newcommand{\Identity}{\normalfont\large I}
\newcommand{\rvline}{\hspace*{-\arraycolsep}\vline\hspace*{-\arraycolsep}}
\DeclareMathOperator{\Aut}{Aut} 
\DeclareMathOperator{\sgn}{sgn} 
\DeclareMathOperator{\adj}{adj}
\newcommand\ipr[1]{\left\langle #1 \right\rangle}
\title{$L^p$-regularity of the Bergman projection on quotient domains}
\subjclass[2010]{32A36}
\author{Chase Bender}
\address{Department of Mathematics, Central Michigan University, Mt Pleasant, MI 48859, USA.}
\email{bende1cc@cmich.edu}
\author{Debraj Chakrabarti}
\address{Department of Mathematics, Central Michigan University, Mt Pleasant, MI 48859, USA.}
\email{chakr2d@cmich.edu}
\urladdr{http://people.cst.cmich.edu/chakr2d}
\author{Luke  Edholm}
\address{Department of Mathematics, University of Michigan, Ann Arbor, MI 48109, USA.}
\email{edholm@umich.edu}
\author{Meera Mainkar}
\address{Department of Mathematics, Central Michigan University, Mt Pleasant, MI 48859, USA.}
\email{maink1m@cmich.edu }
\thanks{Chase Bender was supported by a Student Research and Creative Endeavors grant from Central Michigan University.\\
Debraj Chakrabarti  was partially supported by  National Science Foundation grant DMS-1600371.}
\begin{document}
\maketitle
\begin{abstract} 
We obtain sharp ranges of $L^p$-boundedness for domains in a wide class of Reinhardt domains representable as sub-level sets of monomials, by expressing them as quotients of simpler domains. We prove a general transformation law relating $L^p$-boundedness on a domain and its quotient by a finite group. The range of $p$ for which the Bergman projection is $L^p$-bounded on our class of Reinhardt domains is found to shrink as the complexity of the domain increases.
\end{abstract}

\section{Introduction}

\subsection{Main result}\label{sec-mainresult} Let  $n\geq 2$, and for each $1\leq j \leq n$,  let $b^j=(b^j_1,\dots, b^j_n)\in\Q^n$ be an $n$-tuple of rational numbers.  
Let $\mathscr{U}\subset \cx^n$ be a bounded domain (open connected subset) of the form
\begin{equation}
\label{eq-udef}
\mathscr{U}=\left\{z\in\cx^n: \text{ for } 1\leq j \leq n,\quad  \prod_{k=1}^n\abs{z_k}^{b^j_k} <1  \right\},
\end{equation}
where it is understood that a point $z\in \cx^n$
does not belong to $\mathscr{U}$, if for
some $1\leq j \leq n$,  the quantity $ \prod_{k=1}^n\abs{z_k}^{b^j_k}$ is not defined due to division by zero. We call a domain such as $\Uu$ a \emph{monomial polyhedron}.

 We refer the reader to  subsection~\ref{sec-u} below for  a discussion of the significance of monomial polyhedra in complex analysis. Our main result is the following:
 
 \begin{thm}
 	\label{thm-main}
 	Suppose that the monomial polyhedron $\mathscr{U}$ of \eqref{eq-udef} is bounded. Then there is a positive integer $\kappa(\mathscr{U})$ (the \emph{complexity} of $\Uu$, whose computation is described below)  such that the Bergman projection 
 	on $\Uu$ is bounded on $L^p(\mathscr{U})$ if and only if
 	\begin{equation}
 	\label{eq-bounds}\frac{2\kappa(\mathscr{U})}{\kappa(\mathscr{U})+1}<p< \frac{2\kappa(\mathscr{U})}{\kappa(\mathscr{U})-1}.
 	\end{equation}
 \end{thm}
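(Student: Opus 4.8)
The plan is to reduce the $L^p$-mapping problem on $\Uu$ to a weighted estimate for the Bergman projection on a polydisc, exploiting the realization of $\Uu$ as a finite quotient of a product domain. First I would clear denominators so that the exponent matrix $B=(b^j_k)$ is integral, and use the boundedness hypothesis to show that $B$ is invertible with $B^{-1}$ entrywise nonnegative; this is exactly what makes $\Uu=\Phi_B^{-1}(\D^n)$ for the Laurent-monomial map $\Phi_B(z)_j=\prod_{k}z_k^{b^j_k}$. Writing $B=UDV$ in Smith normal form with $U,V\in\mathrm{GL}_n(\Z)$ and $D=\diag(d_1,\dots,d_n)$, the unimodular factors $U,V$ induce monomial biholomorphisms while $D$ induces the proper power map $z\mapsto(z_1^{d_1},\dots,z_n^{d_n})$, whose deck group is the finite abelian group $\Gamma=\prod_i \Z/d_i\Z$ acting by roots of unity. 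Thus, up to monomial biholomorphism, $\Uu$ is the quotient of a product domain by $\Gamma$.

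Next I would prove the transformation law that is the technical heart of the argument: if $q\colon\Omega\to\Omega/\Gamma$ is the quotient map for a finite group $\Gamma$, then $B_{\Omega/\Gamma}$ is bounded on $L^p(\Omega/\Gamma)$ if and only if $B_\Omega$ is bounded on the $\Gamma$-invariant subspace of the weighted space $L^p\!\big(\Omega,\abs{\det q'}^{2-p}\,dV\big)$. The two ingredients are Bell's transformation formula $B_\Omega\big((\det q')\,(f\circ q)\big)=(\det q')\,(B_{\Omega/\Gamma}f)\circ q$ and the change of variables $\int_{\Omega/\Gamma}\abs{f}^p=\tfrac{1}{\abs{\Gamma}}\int_\Omega \abs{f\circ q}^p\abs{\det q'}^2$, which together convert unweighted $L^p$ downstairs into weighted $L^p$ upstairs. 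Since the monomial biholomorphisms likewise convert $L^p(\Uu)$ into $L^p\big(\D^n,\prod_k\abs{w_k}^{s_k}\,dV\big)$ with $s_k=e_k(2-p)$ for Jacobian exponents $e_k$ read off from $U,V$, the whole question collapses to: for which $p$ is $B_{\D^n}$ bounded on this monomially weighted space, restricted to $\Gamma$-invariant functions?

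Finally I would carry out the model computation. Because $B_{\D^n}=\bigotimes_k B_{\D}$ and the weight splits over coordinates, boundedness holds iff each one-variable projection $B_\D$ is bounded on $L^p(\D,\abs{w}^{s_k}\,dA)$; the sharp Bekoll\'e--Bonami condition for such a power weight is that both the weight and its dual be integrable, namely $-2<s_k<2(p-1)$. Substituting $s_k=e_k(2-p)$ turns this into an interval in $p$ whose endpoints are governed by the extremal Jacobian exponent: for the binding coordinate this reads $\tfrac{2(e+1)}{e+2}<p<\tfrac{2(e+1)}{e}$, so that the quantity the paper calls $\kappa(\Uu)$ is exactly the extremal value produced this way, and intersecting the per-coordinate intervals yields precisely \eqref{eq-bounds}. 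Sufficiency inside the range then follows from a weighted Schur test applied factor by factor, and necessity at and beyond the endpoints follows by testing $B_\Uu$ on explicit monomials pulled back from $\D^n$ and watching their images leave $L^p$ as $p$ approaches an endpoint.

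I expect the main obstacle to be the transformation law in the $L^p$ (rather than $L^2$) setting: because $L^p$-boundedness is not a biholomorphic invariant, the Jacobian weights must be tracked exactly through both the unimodular biholomorphisms and the finite-group quotient, and one must verify that restricting to the $\Gamma$-invariant subspace is compatible with the weighted estimates and does not enlarge the range of admissible $p$ (one checks that the binding test monomials are already $\Gamma$-invariant). Getting this bookkeeping right, and reading off the correct value of $\kappa(\Uu)$ from the extremal exponents of $B$, is the crux of the proof.
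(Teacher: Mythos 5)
Your overall architecture --- realize $\Uu$ as a finite quotient of a product domain, transfer the projection through a weighted transformation law, and finish with estimates on the polydisc --- is the same as the paper's (Theorems \ref{thm-geometry} and \ref{thm-transformation}, whose statement your transformation law matches), but your computation of the sharp range contains a genuine error that would produce the wrong $\kappa(\Uu)$. In the model computation you drop the $\Gamma$-invariance constraint and tensor the one-variable power-weight condition $-2<s_k<2(p-1)$; with the quotient map $\Phi_A$, $A=\adj B$, the weight $\abs{\det \Phi_A'}^{2-p}$ has exponent $(\one\cdot a_j-1)(2-p)$ in $z_j$, so your recipe yields the interval $\frac{2\,\one\cdot a_j}{\one\cdot a_j+1}<p<\frac{2\,\one\cdot a_j}{\one\cdot a_j-1}$, with endpoints governed by $\one\cdot a_j$, whereas the theorem's endpoints are governed by $\kappa(\Uu)=\max_j \one\cdot a_j/\gcd(a_j)$. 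Your final-paragraph claim that restricting to the invariant subspace ``does not enlarge the range of admissible $p$'' is exactly backwards: the enlargement is where the arithmetic lives. Concretely, write the Hartogs triangle redundantly as $\{\abs{z_1}^2<\abs{z_2}^2<1\}$, so that $B=\left(\begin{smallmatrix}2&-2\\0&1\end{smallmatrix}\right)$ and $A=\left(\begin{smallmatrix}1&2\\0&2\end{smallmatrix}\right)$: your computation gives $8/5<p<8/3$, but the true range is $4/3<p<4$ (here $\kappa=2$); worse, your answer changes when a defining row is rescaled, though the domain does not (cf.\ Proposition \ref{prop-invariance}). The missing mechanism is Corollary \ref{cor-invfun}: every $\Gamma$-invariant $f\in A^p(\D^n)$ factors as $z^{g(A)-\one}h(z)$, i.e.\ it vanishes to order $\gcd(a_j)-1$ in $z_j$, and this forced vanishing offsets the negative weight (Lemmas \ref{lem-polydisc} and \ref{lem-secondestimate}), pushing boundedness out to $p<2\kappa/(\kappa-1)$. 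Symmetrically, your necessity test requires a monomial realizing the extremal invariant exponent, namely an integral $\beta$ with $(\beta+\one)a_J=\gcd(a_J)$ that simultaneously lies in $A^2(\Uu)$; producing it is a nontrivial arithmetic step (Proposition \ref{prop-2allow}(2) via Lemma \ref{lem-part2}), not a matter of inspecting the binding coordinate.

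There is also a structural flaw in your reduction: for $U\in GL_n(\Z)$ the monomial map $\Phi_U$ is a biholomorphism of the torus $(\cx^*)^n$ only; it does not carry $\D^n$ to $\D^n$ nor preserve boundedness, so the Smith factorization $B=UDV$ does not exhibit $\Uu$ as a quotient of a product domain. The intermediate domain $\Phi_U^{-1}(\D^n)$ is a general Reinhardt domain whose log-image is a simplicial cone, and its preimage under the diagonal power map $\Phi_D$ is not a product, so there is no polydisc on which to run your coordinate-by-coordinate estimates. The paper's choice $A=\adj B$ is engineered precisely to avoid this: since $BA=\det B\cdot I$, the composite is a pure power map, the covering domain is the product $\D^n_\slb$ (the polydisc minus coordinate hyperplanes, harmless since $A^p(\D^n_\slb)=A^p(\D^n)$ for $p\geq 2$), at the cost of the larger deck group $\Z^n/A(\Z^n)$ of order $\det A=(\det B)^{n-1}$ --- Smith normal form enters the paper only to count that order. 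In short: your weighted transformation law and quotient philosophy are sound and agree with the paper, but (i) the intermediate geometry of the Smith-form route does not produce a product covering domain, and (ii), fatally, the unrestricted weighted polydisc estimate computes the wrong exponent whenever some $\gcd(a_j)>1$, so without the invariance refinement your argument cannot yield \eqref{eq-bounds}.
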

 To compute $\kappa(\Uu)$, first define for a vector $x\in \mathbb{Q}^n\setminus\{0\}$ the positive integer 
 $\mathsf{h}(x)$ (the \emph{projective height} of $x$) 
 as follows. If we think of $x$ as the homogeneous coordinates of a point $[x]$ in the rational projective space $\mathbb{P}^{n-1}(\mathbb{Q})$, there is clearly an \emph{integer} vector $y\in \Z^n$ such that 
  $[y]=[x]$ (i.e., there is a $\lambda \in \mathbb{Q}\setminus \{0\}$ such that $y=\lambda x$), and 
 we have additionally that $\gcd(y_1,\dots, y_n)=1$.
  We then set
  \begin{equation}
      \label{eq-height}\mathsf{h}(x)=\sum_{j=1}^n \abs{y_j}.
  \end{equation}
 We can think of $\mathsf{h}$ as a \emph{ height function} on $\mathbb{P}^{n-1}(\mathbb{Q})$ 
 in the sense of Diophantine geometry, uniformly comparable to the standard multiplicative  
 height function (see \cite[pp. 174 ff.]{silverman}).

Let $B$ be the $n\times n$ matrix  whose entry in the $j$-th row and $k$-th column is $b^j_k$, i.e., the $j$-th row of $B$ is the multi-index $b^j$ in \eqref{eq-udef}. It will follow from our work below (Proposition~\ref{prop-uopen})  that the matrix $B\in M_n(\mathbb{Q})$ is invertible. We define
\begin{equation}
\label{eq-kappa}
\kappa(\mathscr{U})= \max_{1\leq k \leq n} \mathsf{h}(B^{-1}e_k),
\end{equation}
where $e_k$ denotes the $n\times 1$ column vector all whose entries are zero, except the $k$-th, which is 1. 
Notice that $B^{-1}e_k$ is simply the  $k$-th \emph{column} of the matrix  $B^{-1}$, 
that is, the arithmetic complexity of  the monomial polyhedron $\mathscr{U}$  is the maximum projective height of the columns of $B^{-1}$, where $B$ is the rational matrix whose rows are the multi-indices occuring in the $n$ inequalities that define $\mathscr{U}$ in \eqref{eq-udef}. It will be shown below in Proposition~\ref{prop-invariance} that the integer $\kappa(\Uu)$ is determined only by the domain $\Uu$ and not 
the particular representation on the right hand side of \eqref{eq-udef}.

\subsection{Singular Reinhardt domains in complex analsysis}
\label{sec-u}
Except in the degenerate case when it reduces to a polydisc (e.g. when $b^j=e_j$, the $j$-th natural basis vector of $\Q^n$), the domain $\Uu$ is a Reinhardt pseudoconvex domain (with center of symmetry at the origin)
such that the origin is a boundary point.  These \emph{ singular Reinhardt domains} (their boundaries are not Lipschitz at 0) display 
pathological holomorphic extension phenomena: the best-known example is that 
the Hartogs triangle $\{\abs{z_1}<\abs{z_2}<1\}\subset \cx^2$, corresponding to a $\Uu$ with $b^1=(1,-1), b^2=(0,1)$
(see \cite{behnke1933,sibony1975}). For example,
on  a singular Reinhardt domain, each holomorphic function smooth up to the closure extends holomorphically to a fixed
neighborhood of the closure (see \cite{sibony}), something which is impossible for smoothly bounded pseudoconvex
domains (\cite{hakimsibony,catlin}).  Therefore, a profound understanding of function theory on these domains
is an important step in extending classical results  on the regularity of the $\dbar$-problem
(and associated operators such as the Bergman projection) to new and more general settings (see \cite{jarpflubook}). Monomial polyhedra are an interesting class of such singular Reinhardt pseudoconvex domains with
tractable geometry and some very interesting properties. They can be compared to analytic polyhedra in the classical theory of pseudoconvex domains, model exhausting domains where 
explicit computations are possible (see \cite[Section 24]{vladi}).

The striking phenomenon  observed in Theorem~\ref{thm-main}  was first noticed (see \cite{EdhMcN16b}) in the setting of 
the so-called {\em generalized Hartogs triangles},  defined for coprime positive integers $k_1,k_2$ as
\begin{equation}\label{eq-genhart}
H_{k_1/k_2}=\{(z_1,z_2)\in \cx^2: \abs{z_1}^{k_1/k_2}<\abs{z_2}<1\},
\end{equation}
which corresponds to $b^1=\left(\frac{k_1}{k_2},-1\right)$ and $b^2=(0,1)$.  
It is striking that the range \eqref{eq-bounds} should depend, not on the shape of the domain as a subset 
of $\cx^2$ (which is determined in the case of $H_{k_1/k_2}$ by the ``fatness exponent"  $\frac{k_1}{k_2}$ ),
but on the complexity (which, for $H_{k_1/k_2}$ is $k_1+k_2$) , the range becoming narrower as the complexity rises. As a limiting case, if $\gamma>0$ is irrational, on the domain $\{\abs{z_1}^\gamma<\abs{z_2}<1\}\subset \cx^2$ (a domain of infinite complexity!), the Bergman projection is bounded in the $L^p$-norm, only if $p=2$
The proofs of the precise range of $L^p$-boundedness of the Bergman projection 
on $H_{k_1/k_2}$  and its generalizations  in earlier
work (\cite{EdhMcN16b,Chen17,huo1,zhang1,zhang2}) consist of an explicit computation of the Bergman kernel, followed by an application of Schur's theorem on $L^p$ boundedness of operators defined by integral kernels to determine the range of $L^p$-boundedness. Other authors have used techniques of classical 
harmonic analysis, such as weak-type endpoint estimates along with interpolation in $L^p$-spaces, Muckenhoupt $A_p$ weights etc. to study related questions.  See \cite{chakzeytuncu, Edh16,EdhMcN16,ChEdMc19, wick,CKY19,EdhMcN20,yuan2} for other results in this circle of ideas.

Theorem~\ref{thm-main} not only 
encompasses  the known examples of domains on which the relation between 
the regularity of the Bergman projection and arithmetic complexity has been observed, but also
substantially extends this class of domains. Its proof is based on an understanding of the geometry of monomial polyhedra as quotient domains.  It is hoped that this will eventually lead to a 
deeper understanding of this mysterious notion of complexity, and its extension to other contexts.

\subsection{Ingredients in the proof of Theorem~\ref{thm-main}} 
The range of $L^p$-boundedness of the Bergman projection on a domain  is a function theoretic property determined by its  Hermitian geometry, but the full extent of this relationship is yet to be understood.  This article brings to bear a new perspective on this problem, in the case of monomial polyhedra: one in which the domain is realized as a \emph{quotient} of a simpler domain under the action of a group of biholomorphic automorphisms $\Gamma$ (Theorem~\ref{thm-geometry} below).  

Our approach to the geometry of $\Uu$ in Section~\ref{sec-geometry} below
is inspired by the observation 
that in the ``log-absolute coordinates" $\xi_k=\log\abs{z_k}$, it is represented
as
\begin{equation}
    \label{eq-logabs}\sum_{k=1}^n b^j_k\xi_k<0,\quad \text{ for each } 1\leq j \leq n,
\end{equation}
which is an \emph{open polyhedral cone} (intersection of open half-spaces) in the sense of convex geometry. By a classical result 
 (see \cite[Theorem~1.3, page~30]{ziegler}, also \cite[Section~3.1]{gb}), such a polyhedral 
cone can also be represented as the cone generated by its extreme points, i.e., it is the  image of an orthant (in a possibly higher dimensional Euclidean space) under a linear map. In Section~\ref{sec-geometry}, we prove an analogous statement for $\Uu$: there is a domain $\D^n_\slb\subset\cx^n$, which is  the product of a certain number of unit discs with a certain number of punctured unit discs, and a proper holomorphic map  $\Phi_A:\D^n_\slb\to \Uu$
which is of the ``quotient type" (see Definition~\ref{def-quotienttype} below), i.e., off some small analytic sets in the source and target, it is essentially a quotient map by a group $\Gamma$ of automorphisms of the source. It turns out that the map $\Phi_A$ is of ``monomial type" in the sense of  \cite{nagelpramanik}, i.e. an $n$-dimensional analog of the branched covering map from the disc to itself given by $z\mapsto z^a$ for an integer $a>0$. This fact has several pleasant consequences and  facilitates  computations.

One of the consequences of the existence of the map $\Phi_A$ is that it allows us to compute the Bergman kernel of $\Uu$ explicitly. This has been a crucial step in the study of the Bergman projection on such domains in all prior investigations.  In this paper, however, we avoid computing Bergman kernels and directly study the transformation properties of $L^p$-Bergman spaces. However, we do show in Proposition~\ref{prop-rationality}, using Theorem~\ref{thm-geometry} that the Bergman kernel of $\Uu$ is a rational function. 

In  Section~\ref{sec-transformation} we study how  $L^p$-Bergman spaces
and the Bergman projection acting on $L^p$-Bergman spaces transform under proper holomorphic maps of quotient type.
This point of view leads to a transformation law (Theorem~\ref{thm-transformation} below) relating the $L^p$-Bergman spaces and Bergman projections of the source and target -- one that is closely connected to the well-known  Bell's transformation law relating the Bergman kernels. One new ingredient here is the use of subspaces \emph{invariant} under the action of the deck-transformation group of the proper holomorphic map, which allows us to state a sharp result which can be used in the Proof of Theorem~\ref{thm-main}.
We believe that the considerations of Section~\ref{sec-transformation}  have an independent interest beyond their application here.

The transformation law Theorem~\ref{thm-transformation} 
can be used to pull back the problem from $\Uu$ to the well-understood domain $\D^n_\slb$, which is the polydisc, except for a missing analytic hypersurface, 
and the pulled-back problem can be solved using classical estimates on the polydisc. This is achieved in Sections~\ref{sec-unbounded} and \ref{sec-bounded}, completing the proof of Theorem~\ref{thm-main}.

The germ of the idea of relating the $L^p$-regularity of the Bergman projection with the properties of a ``uniformizing" map from a simpler domain may already be 
found in  \cite{chakzeytuncu, CKY19}. The sharper version of this technique presented in this paper may be thought of as a step towards a unified understanding of the way in which boundary singularities affect the mapping properties of the Bergman projection.

\subsection{Examples in $\cx^2$} It is not difficult to see that a monomial polyhedron in $\cx^n$ is bounded by $n$ Levi-flat ``faces" (in the log-absolute representation \eqref{eq-logabs}, these faces are linear hyperplanes; see the proof of Proposition~\ref{prop-invariance}).
The generalized Hartogs triangles of \eqref{eq-genhart} are special in that one of the faces is a ``coordinate face", i.e.,
represented by a coordinate hyperplane in log-absolute coordinates. In \eqref{eq-genhart}, this is
$\{\abs{z_2}=1\}$ which corresponds to $\{\xi_2=0\}$ in the log-absolute representation.
In two dimensions, 
 the generic monomial polyhedron has two non-coordinate faces, and can be thought of as an intersections of two generalized Hartogs triangles. The Reinhardt shadows in $\rl^2$ (the image of $z\mapsto (\abs{z_1}, \abs{z_2})$) of three monomial polyhedra in $\cx^2$ can be seen in Figure~\ref{fig-1}. The domains corresponding to $(a), (b),$ and $(c)$ are respectively given by:
 \[ \left\{ \abs{z_1}^4 < \abs{z_2} < \abs{z_1}^{1/3} \right\}, \quad \left\{ \abs{z_1}^{1/2} < \abs{z_2} < \abs{z_1}^{1/4} \right\}, \quad \text{ and }\left\{ \abs{z_1}^{1/2} < \abs{z_2} < 1 \right\}.  \]

\begin{figure}
\begin{tikzpicture}[scale=2.25]
\draw[-{latex}, thick] (0,0) -- (1.5,0) node[above] {$|z_1|$};
\node [below] at (.75,0) {$(a)$};
\draw[-{latex}, thick] (0,0) -- (0,1.5) node[right] {$|z_2|$};
\draw [fill=gray!50] (0,0) to [out=90,in=-126.9] (.125,.5) to [out=53.1,in=-161.6] (1,1) to [out=-104,in=26.6] (.5,.0675) to [out=-153.4, in=0] (0,0);

\draw[-{latex}, thick] (2.5,0) -- (4,0) node[above] {$|z_1|$};
\node [below] at (3.25,0) {$(b)$};
\draw[-{latex}, thick] (2.5,0) -- (2.5,1.5) node[right] {$|z_2|$};
\draw [fill=gray!50] (2.5,0) to [out=90,in=-116.6] (2.5675,.5) to [out=63.4,in=-166] (3.5,1) to [out=206.6,in=45] (2.75,.5) to [out=-135,in=90] (2.5,0);

\draw[-{latex}, thick] (5,0) -- (6.5,0) node[above] {$|z_1|$};
\node [below] at (5.75,0) {$(c)$};
\draw[-{latex}, thick] (5,0) -- (5,1.5) node[right] {$|z_2|$};
\draw [fill=gray!50] (5,0) -- (5,1) -- (6,1) to [out=206.6,in=45] (5.25,.5) to [out=-135,in=90] (5,0);
\end{tikzpicture}
\caption{}
\label{fig-1}
\end{figure}

\subsection{$L^p$-theory of the Bergman projection}
We collect here some general information about the Bergman projection, and set up notation for later use. 
  
Let $\Omega$ be a domain (an open connected set) in $\cx^n$. The \emph{Bergman space} $A^2(\Omega)$ is the Hilbert space of holomorphic functions on $\Omega$ which are square integrable with respect to the Lebesgue measure; see \cite{krantzbergman} for a modern treatment.  The space $A^2(\Omega)$ is  a closed subspace of $L^2(\Omega)$, the  usual Hilbert space of measurable functions square integrable with respect to the Lebesgue measure. The \emph{Bergman projection} is the orthogonal projection 
\[
\bm{B}_{\Omega}:L^2(\Omega)\to A^2(\Omega).
\]

The construction of Bergman spaces has a contravariant functorial character. If $\phi:\Omega_1\to \Omega_2$ is a {suitable} holomorphic map of 
domains, we can associate a continuous linear mapping of Hilbert spaces $\phi^\sharp: L^2(\Omega_2)\to L^2(\Omega_1)$ defined for each $f\in L^2(\Omega_2)$ by
\begin{equation}\label{eq-sharp}
\phi^\sharp(f)= f\circ \phi\cdot \det \phi',
\end{equation}where $\phi'(z):\cx^n\to \cx^n$ is the complex derivative the map $\phi$ at $z\in \Omega_1$.  It is clear that $\phi^\sharp$ restricts to a map  $A^2(\Omega_2) \to A^2(\Omega_1)$.  We will refer to $\phi^\sharp$ as the \emph{pullback} induced
by $\phi$.  It is not difficult to see that if $\phi$ is a biholomorphism, then the pullback
$\phi^\sharp$ is an  isometric isomorphism of 
Hilbert spaces $L^2(\Omega_2)\cong L^2(\Omega_1)$, and restricts to an isometric isomorphism $A^2(\Omega_2) \cong A^2(\Omega_1)$.  This biholomorphic invariance of 
Bergman spaces  can be understood intrinsically by interpreting the Bergman space as a space of top-degree holomorphic forms (see \cite{kobayashi} or \cite[pp. 178 ff.]{krantzbergman}), and the map $\phi^\sharp$ as the pullback map of forms induced by the holomorphic map $\phi$. This invariance can be extended to proper holomorphic mappings via Bell's transformation formula, and  lies at the heart  of classical applications  of Bergman theory to the boundary regularity of holomorphic maps; see \cite{bellduke,belltransactions,difo,bellcat}.

For $0<p<\infty$, define $L^p$-Bergman spaces $A^p(\Omega)$ of  $p$-th power integrable holomorphic functions on $\Omega$.  For $p\ge 1$, these are Banach spaces when equipped with the $L^p$-norm.  An extensive theory of these spaces on the unit disc has been developed, in analogy with the theory of Hardy spaces (cf. \cite{durenbergman,zhubergman}). Unlike the $L^2$-Bergman space, the general $L^p$-Bergman space is not invariantly determined by the complex structure alone, but also depends on the Hermitian structure of the domain as a subset of $\cx^n$.  An important question about these spaces is the boundedness of the Bergman projection in the $L^p$-norm.  After initial results were obtained  for discs and balls (\cite{ZahJud64, rudin}), the problem was studied on various classes of smoothly bounded pseudoconvex domains using estimates on the kernel (e.g. \cite{PhoSte77,McNSte94}). On these domains the Bergman projection is bounded in $L^p$ for $1<p<\infty$.  Many examples have been given which show that there are domains on which the Bergman projection fails to be bounded in $L^p$ for certain $p$.  See \cite{barrett84, SonmezBarrett,KrantzPeloso08,Hed02}, in addition to the singular Reinhardt domains already mentioned in \cite{chakzeytuncu,EdhMcN16,EdhMcN16b,Chen17,CKY19,huo1,wick}. This paper proves sharp $L^p$-regularity results on   a large class of singular Reinhardt domains. 

\subsection{Two examples of Theorem~\ref{thm-main}} We illustrate the application of Theorem~\ref{thm-main} to two   families of domains generalizing the Hartogs triangle to higher dimensions, recapturing the results of \cite{zhang1,zhang2}.

Let $k = (k_1, \ldots , k_n)$ be an $n$-tuple of positive integers. The domain
\begin{equation}\label{eq-hk}
\hk=\left\{z\in \mathbb{D}^n: \abs{z_1}^{k_1} < \prod_{j=2}^n \abs{z_j}^{k_j}\right\},
\end{equation}
was introduced in \cite{pjm} where it was called \emph{elementary Reinhardt domains of signature 1}, and its Bergman kernel was  computed  explicitly. We  see that  $\hk$ is a monomial polyhedron as in  (\ref{eq-udef}), 
since
\begin{equation}\label{eq-hku}\hk=\left\{z\in \mathbb{C}^n: \abs{z_1}^{k_1} \abs{z_2}^{-k_2}\cdots \abs{z_n}^{-k_n}< 1,\text{ and } \abs{z_j} < 1 \text{ for } 2 \leq j \leq n \right\}.
\end{equation}

 The  matrix $B$ whose rows are multi-indices occuring in the inequalities in (\ref{eq-hku}) is then given by

\[B =
\begin{pmatrix}
 k_1 &\rvline & \begin{matrix}   -k_2 & \hdots & -k_n \end{matrix}\\
\hline
\begin{matrix}
  0 \\ \vdots \\ 0
\end{matrix} & \rvline &
  \begin{matrix} &&\\ &\Identity_{n-1} &\\
  &&
  \end{matrix}
 \end{pmatrix}, \text{ so that }  \  B^{-1} = \frac{1}{k_1}
\begin{pmatrix} 
 1 & \rvline & \begin{matrix}   k_2 & \hdots & k_n \end{matrix}\\
\hline
\begin{matrix}
  0 \\ \vdots \\ 0
\end{matrix} & \rvline &
  \begin{matrix} &&\\ & k_1\cdot \Identity_{n-1} &\\
  &&
  \end{matrix}
 
\end{pmatrix},\] 
where $\Identity_{n-1}$ is the identity matrix of size $n-1$. 
The projective height  of the first column of $B^{-1}$ is 1, and for $2\leq j\leq n$ that of the $j$-th column  is $\dfrac{k_1+k_j}{\gcd(k_1,k_j)}$, using \eqref{eq-height}.
Therefore, we get the complexity  of $\hk$  as (see (\ref{eq-kappa})):
\[\displaystyle{\kappa({\hk}) = \max \left\{ 1, \frac{k_1+k_2}{\mathrm{gcd}(k_1, k_2)}, \ldots, \frac{k_1+k_n}{\mathrm{gcd}(k_1, k_n)}  \right\}}= \max_{2\leq j \leq n} \left\{\frac{k_1+k_j}{\mathrm{gcd}(k_1, k_j)}\right\}. \]
Noting that the function $x\mapsto \frac{x}{x-1}$ is decreasing for $x>1$, and the function $x\mapsto \frac{x}{x+1}$ is increasing,  by 
 Theorem \ref{thm-main}, the Bergman projection on $\hk$ is bounded on $L^p(\hk)$ if and only if 
\[\max_{2\leq j \leq n}\, \frac{2(k_1+k_j)}{k_1+k_j+\mathrm{gcd}(k_1, k_j)} < p < \min_{2\leq j \leq n}\, \frac{2(k_1 +k_j)}{k_1+k_j- \mathrm{gcd}(k_1, k_j)},\]
in consonance with the result of \cite{zhang2}.

The second family of the domains which we denote by $\sk$ gives a different type of generalization   of the Hartogs triangle. For an $n$-tuple of positive integers $(k_1,\dots, k_n)$, we define:
\begin{equation}\label{eq-sk}
\sk =  \left\{ z\in \mathbb{C}^n: \abs{z_1}^{k_1}< \abs{z_2}^{k_2}<\dots <\abs{z_n}^{k_n}<1\right\}.
\end{equation}
In \cite{park}, the Bergman kernel of $\mathscr{S}_{k}$  was explicitly computed for $n=3$, and in \cite{Chen17} the special case  $k=(1,1,\dots, 1)$ was considered and it was shown that the Bergman projection is $L^p$-bounded on $\mathscr{S}_{(1,1,\dots,1)}$ if and only if $\frac{2n}{n+1}<p< \frac{2n}{n-1}$. In \cite{zhang1}, the  Bergman kernel of $\sk$ was computed  in general, and the range of $L^p$-boundedness of the Bergman projection was determined. Since $\sk$ is a monomial polyhedron given by
\begin{equation}\label{eq-sku}\sk=\left\{z\in \mathbb{C}^n: \text{ for  } 1 \leq j \leq n-1, \abs{z_j}^{k_j}  \abs{z_{j+1}}^{-k_{j+1}} <1, \text{ and } \abs{z_n}^{k_n} < 1 \right\},
\end{equation}
the  matrix $B$ in the definition of complexity and its inverse $B^{-1}$ are given as below:
\[B =
\begin{pmatrix}
 k_1 & -k_2  &  0 & 0 &  \hdots&  0 \\ 
 0&  \phantom{-} k_2 & -k_3 & 0&  \hdots & 0\\
 \vdots & \phantom{k_2} & \ddots & \ddots & \phantom{0} &\vdots \\
 \vdots & \phantom{k_2} & \phantom{k_2}  & \ddots & \ddots& \vdots \\
0 & 0& \hdots & 0 &  k_{n-1} & - k_n \\
 0 & 0 & 0 & \hdots& 0 & \phantom{-} k_n
 \end{pmatrix}, \quad   \  B^{-1} = \frac{1}{K}
\begin{pmatrix} 
 \ell_1 & \ell_1  &  \ell_1 &   \hdots& \hdots&\ell_1 \\ 
 0&  \ell_2 & \ell_2 &  \hdots & \hdots &  \ell_2\\
0 & 0 & \ell_3 &  \hdots & \hdots& \ell_3 \\
 \vdots & \vdots & \phantom{k_2}  & \ddots & \phantom{k_2} & \vdots \\
 0 & 0 & \hdots& \hdots&  0 & \ell_n

\end{pmatrix},\] 

where $K = \prod_{j=1}^n k_j$ and $\ell_j =\frac{K}{k_j} $ for $1 \leq j \leq n.$ Note that the height 
of the $m$-th column of $B^{-1}$ is
\[ h_m= \frac{\sum_{j=1}^m \ell_j}{\mathrm{gcd}(\ell_1, \hdots, \ell_m)}. \]
Now as $m$ increases the numerator $\sum_{j=1}^m \ell_j$ of $h_m$ increases
and the denominator $\gcd(\ell_1, \ldots, \ell_m)$ decreases, so $h_m$ increases with $m$. Therefore, the
complexity of $\sk$  is 
\[\kappa(\sk) = \max_{1\leq m\leq n} h_m =h_n= \frac{\sum_{j=1}^n \ell_j}{\mathrm{gcd}(\ell_1, \hdots, \ell_n)}  .\]
 Therefore,  Theorem \ref{thm-main} shows that the Bergman projection on $\sk$ is bounded on $L^p(\sk)$ if and only if 
\[ \frac{2 \sum_{j=1}^n \ell_j}{ \sum_{j=1}^n \ell_j + \gcd (\ell_1, \hdots \ell_n)} < p < \frac{2 \sum_{j=1}^n \ell_j}{  \sum_{j=1}^n \ell_j  - \gcd (\ell_1, \hdots \ell_n)},\]
recapturing the main result of \cite{zhang1}.

 \subsection{Acknowledgments} We would like to thank Yuan Yuan for  very interesting discussions with DC and LE about this problem and the results of \cite{CKY19} during the 2019 Midwest Several Complex Variables Conference at Dearborn, MI,  Steven Krantz for comments on the same paper made to LE during a visit to Washington University at St. Louis, MO, in  the same year. DC would like to thank the mathematics department of the University of Michigan, Ann Arbor, for its hospitality in Fall 2019.
 We would also like 
to thank Shuo Zhang for pointing out his work in \cite{zhang1, zhang2},  and Włodzimierz Zwonek for 
pointing out his work in \cite{zwonek99, zwonekhab}. We were unfortunately unaware of these contributions
when earlier versions of this paper were posted on the arxiv preprint server. 

\section{Notation}\label{sec-notation}

\subsection{Elementwise operations on Matrices}  
Let $A$ be an $m\times n$ matrix. We denote the entry of $A$ at the $j$-th row and $k$-th column by $a^j_k$, where $1\leq j \leq m, 1\leq k \leq n$.  We use two kinds of products of 
matrices: one is the standard matrix multiplication, denoted by simple juxtaposition $AB$ or sometimes a  dot for clarity: $A\cdot B$.

We also need a second type of multiplication, the \emph{elementwise} or \emph{Hadamard-Schur} product, in which the product of two $m\times n$ matrices $A=(a^j_k)$
and $B=(b^j_k)$ is the $m\times n$ matrix $C$, for which we have $c^j_k=a^j_kb^j_k$, i.e., the element at a certain position of the product is the product of the corresponding entries of the factors. We denote this by
\begin{equation}\label{eq-gdot}
    C=A\gdot B.
    \end{equation}

It will be important to distinguish between column and row vectors.
We denote the $R$-module of $n\times 1$ column vectors by $R^n$, where $R$ can be one of $\Z,\mathbb{Q},\rl, \cx$. The $R$-module
of $1\times n$ \emph{row} vectors is denoted by $(R^n)^\dagger$. We write the entries of the row
vector $a$ (resp. the column vector $b$) as $(a_1,\dots, a_n)$ (resp. as $(b_1,\dots,b_n)^T$).
We let $\one$  denote a $1\times n$ row vector, all whose entries are 1. The positive integer $n$ will be clear from the context: \begin{equation}
    \label{eq-unitm}
    \one=({1,\dots,1}). \quad \text{ ($n$ ones)}
\end{equation}
Similarly, $\one^T$ is the $n\times 1$ column vector, all whose entries are 1. Notice that these are 
identity elements for the elementwise multiplication of row and column vectors.

For real matrices of the same size $A,B$, the notations
\begin{equation}
    \label{eq-order}
    A \succ B, A \succeq B, A\prec B, A \preceq B
\end{equation}
stand for the natural elementwise order, e.g. $A \succ B$ denotes that $a^j_k> b^j_k.$

\subsection{Vector and matrix powers}
If $\alpha$ is a row vector of size $n$, and $z$ is column vector of the same size, we will 
denote
\[z^\alpha =z^{\alpha_1}\dots z^{\alpha_n}=\prod_{j=1}^n z_j^{\alpha_j},\]
whenever the powers $z_j^{\alpha_j}$ make sense, and where we use the convention 
$0^0=1.$
For example, we could have
 $\alpha\in (\mathbb{Z}^n)^\dagger$ and $z\in \cx^n$ such that for each  $j$ such that $\alpha_j<0$, we have $z_j\not =0$. We then say that $\alpha$ is a \emph{multi-index} and $z^\alpha$ is a \emph{(Laurent)  monomial.}
We also set\begin{equation}
\label{eq-monomial}
\varphi_\alpha(z) = z^\alpha.
\end{equation}
Informally, we think of a monomial function as a ``vector power" of a vector variable. 

We will also use ``matrix powers". Recall from the previous subsection that for an $n\times n$ matrix $A$, we denote the element at the $j$-th row and $k$-th column of $A$ by $a_k^j$. Let $a^j$ denote the $j$-th row of $A$, so that each row can be thought of as a multi-index.
We then define for a column vector $z$ of size $n$,
\begin{equation}
    \label{eq-matrixpower}
    z^A=(z^{a^1},\dots, z^{a^n})^T,
\end{equation}
provided each of the monomials is defined. We will also set
\begin{equation}
    \label{eq-Phi}
   \Phi_A(z)=z^A,  
\end{equation}
which is called a \emph{monomial map}.

For row vectors $\alpha, \beta$, square matrices $A,B$ and column vectors { $z,w$} such that the vector and matrix powers are well-defined,
the following  simple algebraic properties of monomials and monomial mappings can be easily verified. Not unexpectedly, these mirror familiar rules of elementary algebra for exponents.

\begin{subequations}
\begin{equation}
    (z\gdot w)^\alpha =z^\alpha\cdot w^\alpha \label{eq-prop1}
    \end{equation}
    \begin{equation}
    (z\gdot w)^A=z^A\gdot w^A \label{eq-prop2}
    \end{equation}
    \begin{equation}
    z^{\alpha+\beta}=z^\alpha\cdot z^\beta \label{eq-prop3}
    \end{equation}
        \begin{equation}
        z^{A+B}=z^A\gdot z^B\label{eq-prop4}
        \end{equation}
        \begin{align}
    (z^A)^\alpha&=z^{\alpha A}& \text{ i.e.}\quad \varphi_\alpha\circ \Phi_A= \varphi_{\alpha A} \label{eq-prop5}
    \end{align}
    \begin{align}
    (z^A)^B&=z^{BA}& \text{ i.e.}\quad \Phi_B\circ \Phi_A= \Phi_{B A}\label{eq-prop6}
    \end{align}
\end{subequations}
\subsection{Two other maps} 
\begin{enumerate}[wide]
    \item The (elementwise) \emph{exponential} map
    \begin{equation}
        \label{eq-expdef}
        \exp: \cx^n \to (\cx^*)^n, \quad \exp(z)=(e^{z_1},\dots, e^{z_n})^T
    \end{equation}
    can be thought of as the exponential map associated to the abelian Lie group $(\cx^*)^n$. 
   Notice that for $\alpha\in (\Z^n)^\dagger$ and $A\in M_n(\Z)$ we have:
    \begin{equation}
        \label{eq-expprop}\exp(z)^\alpha=e^{\alpha z}\text{ and } \exp(z)^A=\exp(Az).
    \end{equation}
  
    \item  The \emph{multi-radius} map
    \begin{equation}
        \label{eq-rhodef}
         \rho : \cx^n\to \rl^n \quad z\mapsto(\abs{z_1}, \dots , \abs{z_n} )^T
    \end{equation}
    restricts to a surjective group homomorphism $(\cx^*)^n \to (\rl^+)^n$, and satisfies, for $\alpha\in (\Z^n)^\dagger$ and $A\in M_n(\Z)$
    \begin{equation}
    \label{eq-rhoprop}
        \rho(z)^\alpha=\abs{z^\alpha} \text{ and } \rho(z)^A=\rho(z^A).
    \end{equation}
\end{enumerate}

 \section{Geometry of monomial polyhedra}\label{sec-geometry}
The main goal of this section is Theorem~\ref{thm-geometry}, which gives a `` uniformization" of a monomial polyhedron
by a monomial proper holomorphic map. This construction is crucial for 
everything that follows. We also show that the Bergman kernel of a monomial polyhedron is rational, though this fact is not used in the sequel. 

\subsection{The matrix $B$}
The definition of the domain $\Uu$ as given in \eqref{eq-udef} can be succinctly rewritten, using the notation 
introduced in the previous Section~\ref{sec-notation} as
\begin{equation}
    \label{eq-udef2}\Uu=\left\{ z\in \cx^n: \rho(z)^B \text{ is defined  and } \rho(z)^B \prec \one^T\right\}, 
\end{equation}
where $B\in M_n(\Q)$ is the matrix whose $j$-th row is $b^j$. In the next proposition, we show
that the matrix $B$ in the definition of $\Uu$ can always be taken to be an integer matrix which is \emph{monotone} in the sense of Collatz (see \cite[pp. 376 ff.]{collatz}):
\begin{prop}\label{prop-uopen} In the representation \eqref{eq-udef2} of the bounded monomial polyhedron $\Uu$ of Theorem~\ref{thm-main}, we may assume without loss of generality (after switching two rows, if necessary) that
\begin{equation}
    \label{eq-B}B\in M_n(\Z), \quad \det B>0, \quad \text{ and } \quad B^{-1}\succeq 0.
\end{equation}
\end{prop}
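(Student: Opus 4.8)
The plan is to pass to the log-absolute coordinates $\xi_k=\log\abs{z_k}$, in which $\Uu$ becomes a polyhedral cone, and then to read off the three assertions of \eqref{eq-B} from the convex geometry of this cone together with the hypothesis that $\Uu$ is bounded. First I would record that on the torus $(\cx^*)^n$ the defining condition $\rho(z)^B\prec \one^T$ is equivalent, under $z\mapsto \xi=(\log\abs{z_1},\dots,\log\abs{z_n})^T$, to $B\xi\prec 0$; thus the log-absolute image of $\Uu\cap(\cx^*)^n$ is the open convex cone $\Sigma=\{\xi\in\rl^n: B\xi\prec 0\}$. Since $\Uu$ is open and nonempty while $\{z: z_k=0 \text{ for some }k\}$ has empty interior, $\Sigma$ is nonempty and hence full-dimensional, and a short limiting argument shows its closure is $\overline\Sigma=\{\xi: B\xi\preceq 0\}$.

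The boundedness of $\Uu$ says precisely that each coordinate functional $\xi\mapsto\xi_k$ is bounded above on $\Sigma$; because $\Sigma$ is a cone with apex at the origin, a functional bounded above on it must be $\le 0$ throughout, so $\overline\Sigma\subseteq(\rl_{\le 0})^n$. Two consequences follow. First, if $Bv=0$ then $\pm v\in\overline\Sigma\subseteq(\rl_{\le 0})^n$, forcing $v=0$; hence $B$ is invertible. Second, the implication $B\xi\preceq 0\Rightarrow\xi_k\le 0$ is exactly the negation of one alternative in Farkas' lemma applied to the data $(B,e_k)$, so the other alternative must hold: there is a vector $y^k\succeq 0$ with $B^Ty^k=e_k$. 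As $B^T$ is invertible, $y^k=(B^{-1})^Te_k$ is the $k$-th row of $B^{-1}$ written as a column, so every entry of $B^{-1}$ is nonnegative, i.e. $B^{-1}\succeq 0$.

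At this stage $B$ is a rational invertible matrix with $B^{-1}\succeq 0$, and it remains to arrange integrality and the sign of the determinant without changing $\Uu$. Scaling the $j$-th row $b^j$ by a positive rational $c_j$ replaces the $j$-th inequality $\prod_k\abs{z_k}^{b^j_k}<1$ by its $c_j$-th power, an equivalent inequality; so with $D=\diag(c_1,\dots,c_n)\succ 0$ chosen to clear all denominators, $\widetilde B=DB$ is an integer matrix defining the same $\Uu$, with $\det\widetilde B$ of the same sign as $\det B$ and $\widetilde B^{-1}=B^{-1}D^{-1}\succeq 0$. Finally, since $n\ge 2$, if $\det\widetilde B<0$ I would interchange two rows: this multiplies the determinant by $-1$, merely reorders the defining inequalities, and permutes the corresponding two columns of $\widetilde B^{-1}$, so $\widetilde B^{-1}\succeq 0$ is preserved.

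The main obstacle is the middle step: correctly translating the analytic hypothesis of boundedness into the geometric containment $\overline\Sigma\subseteq(\rl_{\le 0})^n$, and then extracting the sign information on $B^{-1}$ through the Farkas alternative with the inequalities oriented the right way. Once that translation is set up cleanly, invertibility, nonnegativity of $B^{-1}$, and the bookkeeping for integrality and $\det B>0$ are all routine.
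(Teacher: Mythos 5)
Your proof is correct, but it takes a genuinely different route from the paper's for the key step. The paper, like you, first clears denominators by scaling rows and swaps rows to fix the sign of the determinant; but it establishes both $\det B\neq 0$ and $B^{-1}\succeq 0$ by holomorphic constructions rather than convex duality. For invertibility, the paper takes $x\in\ker B$ and exhibits the explicit unbounded curve $t\mapsto r\gdot\exp(tx)$ inside $\Uu$ (essentially your observation that $\pm v\in\overline\Sigma$, transplanted back through the exponential); for nonnegativity, instead of Farkas' lemma it sets $A=\adj B$, uses $BA=(\det B)I$ to prove the inclusion $\Phi_A((\D^*)^n)\subset\Uu$, and then shows that a single negative entry $a^j_k<0$ would make the monomial $z^{a^j}$ blow up as $z_k\to 0$, contradicting boundedness. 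Your Farkas argument --- reading the implication $\{B\xi\preceq 0\}\subseteq(\rl_{\le 0})^n$ off the cone $\Sigma$ and dualizing to get $B^Ty^k=e_k$ with $y^k\succeq 0$ --- is arguably the more conceptual one, and it sits naturally with the paper's own log-absolute viewpoint (compare the proof of Proposition~\ref{prop-invariance}, where $B^{-1}$ carries the cone $\mathscr{C}_\Uu$ onto the negative orthant); you were also right to flag the two delicate points, namely that the closure identity $\overline\Sigma=\{B\xi\preceq 0\}$ requires $\Sigma\neq\emptyset$, and that the concluding row swap needs $n\ge 2$. What the paper's route buys instead is reuse: the inclusion $\Phi_A((\D^*)^n)\subset\Uu$ and the computation $\rho(z)^{BA}=\rho(z)^{(\det B)I}$ are exactly the germ of the uniformization map of Theorem~\ref{thm-geometry}, so its proof of this proposition doubles as setup for the central construction of Section~3, whereas your argument, while cleaner in isolation, is a dead end with respect to that later machinery.
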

\begin{proof} 
Let $B\in M_n(\Q)$ be the matrix whose rows are $b^1,\dots,b^n$, where 
for $1\leq j \leq n$, the vector $b^j\in (\Q^n)^\dagger$, 
$b^j=(b^j_1,\dots, b^j_n)$ is as in \eqref{eq-udef}, which  can be written in 
the form \eqref{eq-udef2} using the notation introduced above. 
Notice that if any one of 
the vectors $b^j$ is 0, then $\Uu$ is empty, since the inequality $\rho(z)^{b^j}<1$ becomes $1<1$.

If $\delta_j>0$ is a common denominator for the rational numbers 
$b^j_1,\dots, b^j_n$ , then notice that for a $z\in \cx^n$, the quantity $\rho(z)^{b^j}$ is defined and less than 1 if and only if $\rho(z)^{\delta_j b^j}$ is defined and less than 1. Therefore, we can assume without loss of 
generality that the matrix $B$ has integer entries. Note also that interchanging 
the rows of $B$ simply corresponds renumbering the equations in \eqref{eq-udef}, 
so we can further assume without loss of generality that $\det B\geq 0$.

Now, assuming that $B\in M_n(\Z)$ and $\det B\geq 0$,
suppose for a contradiction that $\det B=0$. We will  show that $\Uu$
is unbounded, which will contradict the hypothesis of Theorem~\ref{thm-main}. Since $\det B=0$, there is a nonzero 
vector $x\in \rl^n$ such that $Bx=0$. Let $r\in \Uu$ be such that $r \succ 0$ (such an $r$ exists since $\Uu$ is non-empty, open, and Reinhardt). Consider the curve $f : \rl \to \cx^n$ parametrized by
\[ f(t)=r\gdot \exp(tx),\]
where $\exp$ is as in \eqref{eq-expdef}. Observe that
\[f(t)^B=r^B\gdot \exp(tx)^B=r^B\gdot \exp(tBx)= r^B, \]
since $Bx=0$. Therefore,
\[\rho(f(t))^B= \rho(f(t)^B)=r^B. \]
But since $r \in \Uu$, it follows that $ \rho(f(t))^B = r^B\prec \one^T$, so in fact $f$ is a curve in $\Uu$.
As $x \in \rl^n $ is nonzero, some component $x_k$ of $x$ is nonzero, so that the $k$-th component of the curve $f$,  given by $f_k(t)= r_k e^{tx_k}$
is unbounded as $t\to \pm\infty$. Thus $\Uu$ contains the unbounded image of the curve $f$, showing that $\Uu$ is unbounded if $\det B=0$. Since by assumption $\Uu$ is bounded, we have $\det B\neq 0$. So $\det B>0$.

To show that $B^{-1}\succeq 0$, we let $A$ be the adjugate of $B$, i.e. $A = \adj B$, and we show that 
\[ \Phi_A((\D^*)^n) \subset \Uu,\]
where $\Phi_A$ is the monomial map as in \eqref{eq-Phi}, and $(\D^*)^n$ is the product of $n$ punctured unit disks.
Since  $\Uu$ is bounded this implies that $\Phi_A((\D^*)^n)$ is bounded. 
Let $w \in \Phi_A((\D^*)^n)$ and let $z \in (\D^*)^n$ be such that $z^A=w$. Notice that $z \in (\D^*)^n$ is equivalent to $0 \prec \rho(z) \prec \one^T$, and we also have $z^A=w \in (\cx^*)^n$. By applying \eqref{eq-rhoprop} and Cramer's rule, we find
\[\rho(w)^{B} = \left(\rho(z)^A\right)^{B}= \rho(z)^{BA}=\rho(z)^{(\det B) I}=\left(\abs{z_1}^{\det B},\dots, \abs{z_n}^{\det B}\right)^T.\]
As $\det B > 0$ we find that $\rho(w)^B \prec \one^T$, i.e. $w \in \Uu$, yielding the desired inclusion $\Phi_A((\D^*)^n) \subset \Uu$.

 If $A \not\succeq 0$ then there exist some $1 \leq j,k \leq n$ such that $a^j_k < 0$. 
Let $z\in (\D^*)^n$ be such that all its components, except the $k$-th are $1/2$. Then the $j$-th component of $\Phi_A(z)$ is given by
\[ z^{a^j} = {z_k^{a^j_k}}\cdot \frac{1}{2^m},\quad \text{where }\quad
m= \sum_{\substack{1 \leq \ell \leq n \\ \ell \neq k}} a^j_\ell. \]
As $a^j_k < 0$ we see that as $z_k\to 0$, the monomial $z^{a^j}$ is unbounded, showing that $\Phi_A((\D^*)^n)$ is unbounded. Therefore, the boundedness of $\Uu$ guarantees that $A = \adj B \succeq 0$.

To complete the proof note that $A \succeq 0$  implies that $B^{-1}=(\det B)^{-1}\adj B\succeq 0$.
\end{proof}
Due to Proposition~\ref{prop-uopen} we can easily establish a more concrete bound on monomial polyhedra:
\begin{cor}
A bounded monomial polyhedron is contained in the unit polydisc $\D^n$, and its boundary contains the unit torus $\mathbb{T}^n=\{\abs{z_j}=1, 1\leq j\leq n\}$.
\end{cor}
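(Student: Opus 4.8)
The plan is to work in the log-absolute coordinates $\xi_k=\log\abs{z_k}$ of \eqref{eq-logabs} and to exploit the normalization granted by Proposition~\ref{prop-uopen}, namely that we may take $B\in M_n(\Z)$ with $\det B>0$ and $B^{-1}\succeq 0$. Since $\Uu$ is open, $\partial\Uu=\overline{\Uu}\setminus\Uu$, so the two assertions amount to proving $\Uu\subseteq\D^n$, and then $\T^n\cap\Uu=\emptyset$ together with $\T^n\subseteq\overline{\Uu}$.

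For the containment $\Uu\subseteq\D^n$, I would first treat $z\in\Uu\cap(\cx^*)^n$. For such $z$ set $\xi=(\log\abs{z_1},\dots,\log\abs{z_n})^T$; taking logarithms in the defining relation $\rho(z)^B\prec\one^T$ (via \eqref{eq-rhoprop}) turns it into $B\xi\prec 0$. Writing $\xi=B^{-1}(B\xi)$ and using $B^{-1}\succeq 0$, each coordinate $\xi_k$ is a nonnegative combination of the strictly negative entries of $B\xi$; because $B^{-1}$ is invertible its $k$-th row is nonzero, hence has a positive entry, so $\xi_k<0$, i.e.\ $\abs{z_k}<1$. This gives $\Uu\cap(\cx^*)^n\subseteq\D^n$. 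To pass to all of $\Uu$ I would invoke openness: if some $z^0\in\Uu$ had $\abs{z^0_m}\geq 1$ then $z^0_m\neq 0$, and a sufficiently small perturbation making every vanishing coordinate nonzero while scaling the $m$-th coordinate up to modulus $>1$ would remain in $\Uu$ and lie in $(\cx^*)^n$, contradicting the previous step.

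For the torus, the exclusion $\T^n\cap\Uu=\emptyset$ is immediate: if $\abs{w_j}=1$ for all $j$ then $\rho(w)=\one^T$, so $\rho(w)^{b^j}=1$ for each $j$, whence $\rho(w)^B=\one^T\not\prec\one^T$ and $w\notin\Uu$. For $\T^n\subseteq\overline{\Uu}$ I would observe that the solution set $S=\{\xi\in\rl^n:B\xi\prec 0\}$ of \eqref{eq-logabs} is an open cone: $\xi\in S$ forces $t\xi\in S$ for all $t>0$. As $\Uu$ is nonempty there is some $\xi^0\in S$, and $t\xi^0\to 0$ as $t\to 0^+$, so $0\in\overline{S}$. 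Translating back, the radii $r^{(t)}=\exp(t\xi^0)$ satisfy $(r^{(t)})^B\prec\one^T$ and $r^{(t)}\to\one^T$; given $w\in\T^n$, the points $z^{(t)}$ with $z^{(t)}_j=r^{(t)}_j w_j$ lie in $\Uu$ (since $\Uu$ is Reinhardt and $r^{(t)}\succ 0$) and converge to $w$, so $w\in\overline{\Uu}$.

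The algebraic manipulations are routine; the one step needing genuine care -- the \emph{main obstacle} -- is upgrading $\Uu\cap(\cx^*)^n\subseteq\D^n$ to all of $\Uu$, because the logarithmic change of coordinates degenerates on the coordinate hyperplanes $\{z_k=0\}$, which are exactly the places where $\Uu$ may meet its lower-dimensional faces. The openness-and-perturbation argument circumvents this, and I would be careful that the perturbation stays inside the ball about $z^0$ furnished by openness.
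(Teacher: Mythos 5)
Your proof is correct, but on the first assertion it takes a detour the paper avoids. The paper's own argument stays entirely multiplicative: since $B^{-1}\succeq 0$ and (by invertibility) no row of $B^{-1}$ vanishes, any $r\in\rl^n$ with $0\preceq r\prec\one^T$ satisfies $r^{B^{-1}}\prec\one^T$; applying this to $r=\rho(z)^B$ and using $\left(\rho(z)^B\right)^{B^{-1}}=\rho(z)^{B^{-1}B}=\rho(z)$ gives $\rho(z)\prec\one^T$ for every $z\in\Uu$ in one stroke --- including points with vanishing coordinates, because nonnegative matrix powers of vectors with entries in $[0,1)$ never involve division by zero. So what you single out as the main obstacle, the degeneracy of $\xi_k=\log\abs{z_k}$ on the coordinate hyperplanes, is an artifact of your choice to linearize via \eqref{eq-logabs}; your openness-and-perturbation patch is valid (and could be streamlined: since your first step already gives the strict bound $\abs{z_m}<1$ on $\Uu\cap(\cx^*)^n$, it suffices to keep $z^0_m$ fixed and perturb only the vanishing coordinates, with no rescaling), but the paper's elementwise monotonicity buys uniformity over all of $\Uu$ with no case distinction. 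On the torus assertion the two proofs are the same idea: the paper compresses it into the remark that the origin is a boundary point of the cone in the log-absolute representation, which is exactly your scaling $t\xi^0\to 0$ as $t\to 0^+$ combined with Reinhardt symmetry; your explicit verification that $\T^n\cap\Uu=\emptyset$, and your observation that a point $\xi^0$ in the cone requires a point of $\Uu$ with all coordinates nonzero (supplied by openness), usefully unpack what the paper leaves implicit in calling the origin a boundary point.
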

\begin{proof}
For a  bounded monomial polyhedron $\Uu$, let $z\in \Uu$ so that $\rho(z)^B\prec 1$ where $B$ is as in Proposition~\ref{prop-uopen}. Since $B^{-1}\succeq 0$, we have for any $r\in \rl^n$ with 
$0\preceq r \prec 1$ that $r^{B^{-1}}\prec 1$. Therefore we have $(\rho(z)^B)^{B^{-1}}\prec 1$, i.e.
$\rho(z)^{B^{-1}B}=\rho(z)\prec 1$. It follows that $z\in \D^n$. The second assertion follows on noting that in the log absolute representation \eqref{eq-logabs} of $\Uu$, the origin (which corresponds to the unit torus of $\cx^n$) is a boundary point.
\end{proof}

In the definition \eqref{eq-udef2}, it is clear that the matrix $B$ is not unique: permuting the order of inequalities, does not change to domain $\Uu$, and multiplying for each $j$ the row $b^j$ by the same rational $\delta_j>0$ also gives exactly the same $\Uu$. Therefore, for any permutation matrix $P$ and any positive
diagonal matrix $D$ with rational entries, the domains corresponding to $B$ and $DPB$ are the same. The following proposition shows that the complexity $\kappa(\Uu)$ is independent of the matrix $B$ and depends only on $\Uu$.
\begin{prop}\label{prop-invariance}
  The complexity $\kappa(\Uu)$ is independent of the choice of the representing matrix $B$ in \eqref{eq-udef2}.
\end{prop}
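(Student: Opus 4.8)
The plan is to pass to the log-absolute picture of \eqref{eq-logabs} and exploit the uniqueness of the facet description of a full-dimensional simplicial convex cone. Writing $\xi_k=\log\abs{z_k}$, the map $z\mapsto(\log\abs{z_1},\dots,\log\abs{z_n})^T$ carries $\Uu\cap(\cx^*)^n$ onto the open polyhedral cone
\[
C=\{\xi\in\rl^n:\ B\xi\prec 0\},
\]
and since the coordinate hyperplanes are nowhere dense, $\Uu\cap(\cx^*)^n$ is dense in $\Uu$; hence $C$ is intrinsically determined by the domain $\Uu$, independently of the matrix $B$ chosen to present it. By Proposition~\ref{prop-uopen} we may take $B\in M_n(\Z)$ invertible, so $C$ is cut out by $n$ linearly independent halfspaces, making $C$ a full-dimensional simplicial cone whose $j$-th facet lies in the hyperplane $\{\xi:\ \langle b^j,\xi\rangle=0\}$.

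First I would show that the presenting matrix is unique up to the obvious ambiguities, i.e.\ that any two integer matrices $B,B'$ representing the same $\Uu$ satisfy $B'=DPB$ for a permutation matrix $P$ and a positive rational diagonal matrix $D$. The key point is irredundancy: because $B$ is invertible, for each $j$ the linear system $\langle b^j,\xi\rangle=1$, $\langle b^i,\xi\rangle=-1\ (i\neq j)$ has a solution, which lies in every defining halfspace except the $j$-th; hence no inequality is implied by the others and all $n$ inequalities are facet-defining. Since the outward normal to a facet of a full-dimensional cone is unique up to a positive scalar, the rows of $B$, viewed as oriented elements of $\mathbb{P}^{n-1}(\Q)$, are determined by $C$ up to order and positive rescaling; this is precisely the relation $B'=DPB$.

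It remains to check that $\kappa$ is insensitive to the replacement $B\mapsto DPB$. I would compute $(DPB)^{-1}=B^{-1}P^{T}D^{-1}$, so that the $k$-th column of $(DPB)^{-1}$ equals $d_{k}^{-1}\,B^{-1}e_{\sigma(k)}$, where $\sigma$ is the permutation determined by $P^{T}e_k=e_{\sigma(k)}$ and $d_k>0$ is the corresponding diagonal entry of $D$. Since $\mathsf{h}$ descends to a function on $\mathbb{P}^{n-1}(\Q)$ — by construction $\mathsf{h}(cx)=\mathsf{h}(x)$ for every nonzero rational $c$ — we obtain $\mathsf{h}\bigl((DPB)^{-1}e_k\bigr)=\mathsf{h}(B^{-1}e_{\sigma(k)})$. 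Taking the maximum over $k$ and reindexing by $\sigma$ then yields
\[
\max_{1\le k\le n}\mathsf{h}\bigl((DPB)^{-1}e_k\bigr)=\max_{1\le j\le n}\mathsf{h}(B^{-1}e_j)=\kappa(\Uu),
\]
as desired.

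The main obstacle will be making the convex-geometry step rigorous: confirming that $C$ is genuinely intrinsic to $\Uu$ (via density of $\Uu\cap(\cx^*)^n$ and the Reinhardt symmetry) and that the only freedom in presenting a simplicial cone by linear inequalities is positive rescaling and reordering of the facet normals. Once the normalization $B'=DPB$ is established, the height computation is immediate, precisely because $\mathsf{h}$ is by design a projective, scale-invariant height.
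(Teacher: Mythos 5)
Your proposal is correct and takes essentially the same route as the paper's own proof: pass to the log-absolute cone $\{B\xi\prec 0\}$, show every row of a representing matrix is facet-defining so that any two representations are related by $C=DPB$ with $P$ a permutation matrix and $D$ a positive diagonal matrix, and conclude from the projective (scale) invariance of the height $\mathsf{h}$. The only differences are cosmetic: the paper establishes facet-ness by transporting the cone to the negative orthant via $B$ and deduces $D\succ 0$ from the positivity of $B^{-1}$ and $C^{-1}$ given by Proposition~\ref{prop-uopen}, whereas you use an explicit irredundancy point together with the uniqueness (up to positive scalar) of outer facet normals of a full-dimensional cone --- both standard and valid.
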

\begin{proof} Notice that switching two rows of $B$ does not change the complexity of $\Uu$, we can assume that 
the conditions \eqref{eq-B} hold for the representing matrix.
In the log-absolute coordinates $\xi_k=\log\abs{z_k}$ the domain $\Uu$ is represented by the equations
\eqref{eq-logabs}, which define the open polyhedral cone $\mathscr{C}_\Uu=\{B\xi\prec 0\}\subset\rl^n$. We claim
that for each $1\leq j\leq n$, the hyperplane $H_j=\{ b^j \xi = 0 \}$ determined by the $j$-th row of the matrix
$B$ is a face of $\mathscr{C}_\Uu$, i.e. the intersection $\ol{\mathscr{C}_\Uu}\cap H_j\not=\emptyset$.
Indeed, since by Proposition~\ref{prop-uopen}, $B^{-1}\succ 0$ and $B$ is a linear automorphism of $\Q^n$, it is easy to verify that 
the image $B^{-1}(\mathscr{C}_\Uu)$ is precisely the negative orthant $\{\eta\prec 0\}\subset\rl^n$. But
the negative orthant clearly has faces $\{\eta_j=0\}, j=1,\dots, n$, which corresponds to the faces
$H_j$ of $\mathscr{C}_\Uu$.

Now, if $C\in M_n(\Q)$ is another matrix (satisfying \eqref{eq-B}) such that 
\[\Uu= \left\{ z\in \cx^n: \rho(z)^C \text{ is defined  and } \rho(z)^C \prec \one^T\right\},\]
then the faces of $\mathscr{C}_\Uu$ are also given by $\{ c^j \xi = 0 \}, 1\leq j\leq n$,
so that  there exists a permutation $\sigma \in S_n$ such that 
\[ \{ c^j \xi = 0 \} = H_{\sigma(j)}= \{b^{\sigma(j)} \xi = 0 \} \]
for each $1 \leq j \leq n$. Then the transposes  $(c^j)^T$ and $(b^{\sigma(j)})^T$ are both normal to $H_{\sigma(j)}$, so that there is a $\delta_j \in \Q\setminus\{0\}$
such that $c^j = \delta_j b^{\sigma(j)}$. Therefore, $C=DPB$, where $D$ is the diagonal matrix $\mathrm{diag}(\delta_1,\dots,\delta_n)$ and $P$ is the permutation matrix whose $j$-th column is $e_{\sigma(j)}$.
Since $C^{-1}=B^{-1}P^{-1}D^{-1}$ and we have $C^{-1}\succ 0, B^{-1}\succ 0$ by Proposition~\ref{prop-uopen}, it follows that $D\succ 0$. Therefore,
\begin{align*}
            \max_{1 \leq j \leq n} \mathsf{h}(C^{-1}e_j) &= \max_{1 \leq j \leq n} \mathsf{h}(B^{-1}P^{-1}D^{-1}e_j) = \max_{1 \leq j \leq n} \mathsf{h}(\frac{1}{\delta_j}B^{-1}e_{\sigma(j)}) \\
           &=\max_{1 \leq j \leq n} \mathsf{h}(B^{-1}e_{\sigma(j)})   =\max_{1 \leq j \leq n} \mathsf{h}(B^{-1}e_j).
\end{align*}
This shows that $\kappa(\Uu)$ as in \ref{eq-kappa} is well-defined independent of the matrix $B$.
\end{proof}

\subsection{The Jacobian determinant of $\Phi_A$}
For a subset $E \subseteq \{1 ,\dots , n \}$, define
\begin{equation}
    \cx^n_E := \{z \in \cx^n : z_k \neq 0 \text{ for all } k \in E \} = \prod_{k=1}^n \Omega_k
\end{equation}
where $\Omega_k = \cx$ if $k \not\in E$ and $\Omega_k = \cx^*$ if $k \in E$. 
Then for $P \in M_n(\Q)$ define
\begin{equation}\label{eq-kdef}
\mathsf{K}(P) = \{ 1 \leq k \leq n : p^j_k < 0 \text{ for some } 1 \leq j \leq n \},
 \end{equation}
 that is, $\mathsf{K}(P)$ is the set of indices of those columns of $P$ which have at least one negative entry. 
 Notice that if $P\in M_n(\Z)$ the matrix power $z^P$ is defined for $z\in \cx^n$ if and only if $z\in \cx^n_{\mathsf{K}(P)}$, i.e. in computing $z^P$ we do not raise zero to a negative power. Similarly, for $P\in M_n(\Q)$ and a vector $r\in \rl^n$ with $r_j\geq 0$, the rational power $r^P$ is defined provided $r \in \rl^n_{\mathsf{K}(P)} $ for 
 exactly the same reason.
 
The following  analog of the calculus formula $\frac{d}{dx}x^n=nx^{n-1}$  may be found in  \cite[Lemma~4.2]{nagelduke}, 
and will be needed frequently in the sequel.
\begin{lem}
\label{lem-PhiA'}
For any $A \in M_n (\Z)$ and any $z \in \cx^n_{\mathsf{K}(A)}$ we have
\begin{equation}
     \det\Phi'_A(z) =\det A\cdot{z^{\one A-\one }},\label{eq-det2}
\end{equation}
   where $\Phi_A(z)=z^A$ and $\Phi_A'(z):\cx^n\to \cx^n$ is the 
   complex derivative of $\Phi_A$ at the point $z\in \cx^n_{\mathsf{K}(A)}$.
\end{lem}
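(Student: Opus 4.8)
The plan is to compute the Jacobian matrix of $\Phi_A$ entrywise, recognize it as a product of two diagonal matrices with the constant matrix $A$, and then read off the determinant from multiplicativity. First I would work on the open dense subset $(\cx^*)^n\subset\cx^n_{\mathsf{K}(A)}$, where every coordinate is nonzero and all partial derivatives are classical. Writing $a^j$ for the $j$-th row of $A$, the $j$-th component of $\Phi_A$ is the monomial $z^{a^j}=\prod_{k=1}^n z_k^{a^j_k}$, and differentiating gives
\[
\frac{\partial}{\partial z_\ell} z^{a^j} = a^j_\ell\,\frac{z^{a^j}}{z_\ell},
\]
so that the $(j,\ell)$ entry of $\Phi'_A(z)$ equals $z^{a^j}\cdot a^j_\ell\cdot z_\ell^{-1}$.

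Next I would record the resulting factorization. Setting $D_1=\diag(z^{a^1},\dots,z^{a^n})$ and $D_2=\diag(z_1^{-1},\dots,z_n^{-1})$, the entrywise formula above is precisely $\Phi'_A(z)=D_1\cdot A\cdot D_2$ on $(\cx^*)^n$. Taking determinants and using multiplicativity,
\[
\det\Phi'_A(z)=\det(D_1)\cdot\det A\cdot\det(D_2)=\left(\prod_{j=1}^n z^{a^j}\right)\cdot\det A\cdot\left(\prod_{\ell=1}^n z_\ell^{-1}\right).
\]
The two products simplify to monomials: by the additivity rule \eqref{eq-prop3}, $\prod_{j=1}^n z^{a^j}=z^{a^1+\dots+a^n}=z^{\one A}$, since the total exponent of $z_k$ is $\sum_{j=1}^n a^j_k=(\one A)_k$, while $\prod_{\ell=1}^n z_\ell^{-1}=z^{-\one}$. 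Combining these, again via \eqref{eq-prop3}, yields $\det\Phi'_A(z)=\det A\cdot z^{\one A-\one}$ on $(\cx^*)^n$.

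Finally I would upgrade this identity from $(\cx^*)^n$ to all of $\cx^n_{\mathsf{K}(A)}$. Both sides are holomorphic on $\cx^n_{\mathsf{K}(A)}$: the left side because $\Phi_A$ is holomorphic there, and the right side because the exponent $\one A-\one$ has a negative $k$-th entry only when $\sum_{j}a^j_k\le 0$; if $\det A\neq 0$ no column of $A$ vanishes, so this forces some $a^j_k<0$, i.e. $k\in\mathsf{K}(A)$, where $z_k\neq0$ is guaranteed, and hence the monomial $z^{\one A-\one}$ is genuinely holomorphic on $\cx^n_{\mathsf{K}(A)}$, whereas if $\det A=0$ both sides vanish identically. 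Since $(\cx^*)^n$ is dense in the connected set $\cx^n_{\mathsf{K}(A)}$ and the two holomorphic functions agree there, they agree everywhere, proving \eqref{eq-det2}. The computation is entirely elementary; the only point requiring a little care is this identity-theorem extension to the coordinate hyperplanes $\{z_k=0\}$ with $k\notin\mathsf{K}(A)$, where the factorization through $D_2$ is merely formal.
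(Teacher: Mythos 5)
Your proof is correct and takes essentially the same route as the paper: the paper likewise establishes the entrywise formula $\partial z^{a^j}/\partial z_k = a^j_k\, z^{a^j}/z_k$ on the set where all coordinates are nonzero and then extracts $\det A\cdot z^{\one A-\one}$, merely via the Leibniz permutation-sum expansion of the determinant rather than your diagonal factorization $\Phi'_A(z)=D_1\cdot A\cdot D_2$, which is the same computation in tidier packaging. If anything your write-up is more careful at the end: the paper dismisses the passage from $(\cx^*)^n$ to $\cx^n_{\mathsf{K}(A)}$ with the single phrase ``the result follows by analytic continuation,'' whereas you verify that $z^{\one A-\one}$ is genuinely holomorphic on $\cx^n_{\mathsf{K}(A)}$ when $\det A\neq 0$ and dispose of the degenerate case $\det A=0$ explicitly.
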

\begin{proof} First assume that $z$ is such that $z_k{\neq}0$ for each $k$.
For $1\leq j,k \leq n$, the entry in the $j$-th row and $k$-th column of the complex derivative matrix $\Phi_A'(z)$ is $\dfrac{\partial z^{a^j}}{\partial z_k}$, where recall that $a^j$ denotes the multi-index in $(\Z^n)^\dagger$ whose entries constitute the $j$-th row of $A$. If $a_k^j \neq 0$ then 
\[\frac{\partial z^{a^j}}{\partial z_k}=a_k^j z_k^{a_k^j-1} \prod_{\substack{\ell=1 \\ \ell\neq k}}^n z_\ell^{a_\ell^j} = a_k^j\cdot \frac{z^{a^j}}{z_k}.\]
If $a_k^j=0$, then $\dfrac{\partial z^{a^j}}{\partial z_k}=0$, so in fact the above formula holds for all $j,k$. Now, by the representation of a determinant as an explicit polynomials in the matrix entries,  we find
\begin{align*}
    \det \Phi_A'(z) &=\sum_{\sigma \in S_n} \sgn(\sigma) \prod_{j=1}^n \frac{\partial z^{a^j}}{\partial z_{\sigma(j)}}
    =\sum_{\sigma \in S_n} \sgn(\sigma) \prod_{j=1}^n \left(a_{\sigma(j)}^j \frac{z^{a^j}}{z_{\sigma(j)}}\right)\\
    &= \sum_{\sigma \in S_n} \sgn(\sigma) {\prod_{j=1}^n a_{\sigma(j)}^j}   \cdot \frac{\prod_{j=1}^n z^{a^j}}{\prod_{j=1}^n z_{\sigma(j)}}\\
    &= \det A \cdot \frac{\prod_{j=1}^n z^{a^j}}{z^{\one}}
\end{align*}
where we have used the fact that
$\displaystyle{\prod_{j=1}^n z_{\sigma(j)}=\prod_{j=1}^n z_j=z^{\one}}.$ 
Also,
\[\displaystyle{ \prod_{j=1}^n z^{a^j}=\prod_{j=1}^n\prod_{k=1}^n z_k^{a^j_k}= \prod_{k=1}^n z_k^{\sum_{j=1}^na^j_k}=z^{\one A}. 
}\]
Therefore,
\[ \det \Phi_A'(z)=\det A \cdot \frac{z^{\one A}}{z^{\one}}=\det A\cdot{z^{\one A-\one }},
\]
for $z$ such that $z_k{\neq}0$ for each $k$. The result follows by analytic continuation.
\end{proof}

\subsection{A branched covering of the domain $\Uu$}
In this subsection, we will construct a quotient map under a group action from  a domain with simple geometry (a product of
some copies of discs with some copies of punctured discs) to the domain $\Uu$, and this construction will be 
fundamental in the proof of Theorem~\ref{thm-main}.  We first specify what we mean by a quotient map.

\begin{defn}\label{def-quotienttype}Let $\Omega_1, \Omega_2\subset\cx^n$ be domains, let $\Phi: \Omega_1\to \Omega_2$ be a proper holomorphic mapping. Let $\Gamma\subset \mathrm{Aut}(\Omega_1)$ be a group of biholomorphic automorphisms of $\Omega_1$. We will say that $\Phi$ is of \emph{quotient type
	with group }$\Gamma$
if there exist closed lower-dimensional complex-analytic subvarieties $Z_j\subset \Omega_j, j=1,2$ such that $\Phi$ restricts to a covering map
\[ \Phi: \Omega_1\setminus Z_1 \to \Omega_2\setminus Z_2\]
and
for each $z\in \Omega_2\setminus Z_2$, the action of $\Gamma$ on $\Omega_1$ restricts to a transitive action on the fiber $\Phi^{-1}(z)$. Other names in the literature for such proper maps include \emph{regular} and \emph{Galois} proper maps. The group $\Gamma$ will be referred to as the group of
deck-transformations of $\Phi$ (sometimes called the Galois group). 
\end{defn}
Notice that the restricted map $\Phi:\Omega_1\setminus Z_1 \to \Omega_2\setminus Z_2$ becomes a so called 
\emph{regular} covering map (see \cite[page~135 ff.]{massey}), i.e., the covering map gives rise to a biholomorphism between
$\Omega_2\setminus Z_2$ and the quotient $(\Omega_1\setminus Z_1)/\Gamma$, where it can be shown  that $\Gamma$ acts properly and discontinuously on $ \Omega_1\setminus Z_1$. Further, it follows that $\Gamma$ is in fact the full group of deck transformations of the  covering map $\Phi:\Omega_1\setminus Z_1 \to \Omega_2\setminus Z_2$,
and that this covering map has exactly $\abs{\Gamma}$ sheets. Notice that by analytic continuation, the relation $\Phi\circ\sigma=\Phi$ holds for each $\sigma$ in $\Gamma$ on all of $\Omega_1$.

Now let $B$ be a matrix, let the set $\mathsf{K}(B)\subset \{1,\dots, n\}$ be as in \eqref{eq-kdef}, the set of indices of those columns of $B$ which have at least one negative entry. Define a subset $\mathsf{L}(B) \subset \{1,\dots, n\}$ by 
setting
\begin{equation}
    \mathsf{L}(B)=\{1 \leq \ell \leq n: a^k_\ell \neq 0 \text{ for some } k\in \mathsf{K}(B) \}.
\end{equation}
We define the domain $\D^n_\slb =\D^n \cap \cx^n_\slb $, i.e.
\[ \D^n_\slb=\{ z\in \D^n: z_\ell{\neq}0 \text{ for all } \ell \in \mathsf{L}(B) \}. \]
Notice that $\D^n_\slb$ is the product of some copies of the unit disc with some copies of
the punctured unit disc. 
\begin{thm}
\label{thm-geometry}
Let $\Uu$ be the domain of Theorem~\ref{thm-main} above, and assume (without loss of generality) that $\Uu$ is represented as in \eqref{eq-udef2},
where the matrix $B=(b^j_k)$ satisfies the conditions \eqref{eq-B}, and let
\[A = \adj B. \]
Then  the mononial map $\Phi_A$ of \eqref{eq-Phi} maps $\D^n_\slb$ onto 
$\Uu$, and the map so defined
\begin{equation}\label{eq-PhiA}  
    \Phi_{A}: \D^n_\slb \to \Uu
\end{equation} is  a proper holomorphic map of quotient type with group $\Gamma$
consisting of the automorphisms
	$\sigma_\nu: \D^n_{\mathsf{L}(B)} \to \D^n_{\mathsf{L}(B)}$ given by 
	\begin{equation}
	    \label{eq-sigmanu}
	    \sigma_\nu(z)=  \exp\left(2\pi i A^{-1}\nu\right)\gdot z
	\end{equation}
	for  $\nu \in \Z^n$. Further, the group $\Gamma$ has exactly $\det A$ elements.
\end{thm}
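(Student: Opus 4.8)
The plan is to set $A=\adj B$ and to exploit that, by Proposition~\ref{prop-uopen}, $A\in M_n(\Z)$ with $A\succeq 0$ and $\det A=(\det B)^{n-1}>0$; in particular $\mathsf{K}(A)=\emptyset$, so $\Phi_A$ is an honest polynomial map, holomorphic on all of $\cx^n$, and its restriction to $\D^n_\slb$ is automatically holomorphic. The computational heart of the argument is the ``master identity''
\[ \rho(\Phi_A(z))^B=\bigl(\rho(z)^A\bigr)^B=\rho(z)^{BA}=\rho(z)^{(\det B)I}=\bigl(\abs{z_1}^{\det B},\dots,\abs{z_n}^{\det B}\bigr)^T, \]
obtained from \eqref{eq-prop6} and $BA=(\det B)I$, just as in the proof of Proposition~\ref{prop-uopen}. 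First I would check that this chain is legitimate on $\D^n_\slb$: applying $B$ (which has negative entries exactly in the columns indexed by $\mathsf{K}(B)$) to $\rho(z)^A$ requires $\rho(z)^A\in\rl^n_{\mathsf{K}(B)}$, and this is precisely what the definition of $\slb$ secures, since for $k\in\mathsf{K}(B)$ every index $\ell$ with $a^k_\ell\neq 0$ lies in $\slb$, so $z_\ell\neq 0$ and hence $\rho(z)^{a^k}\neq 0$. As $z\in\D^n_\slb\subset\D^n$ forces $\abs{z_j}<1$, the master identity gives $\rho(\Phi_A(z))^B\prec\one^T$ and $\Phi_A(z)\in\cx^n_{\mathsf{K}(B)}$, i.e. $\Phi_A(\D^n_\slb)\subseteq\Uu$.

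Next I would pin down the group. Each $\sigma_\nu$ multiplies coordinates by the unimodular vector $\exp(2\pi i A^{-1}\nu)$ (note $A^{-1}\nu\in\Q^n\subset\rl^n$), so it preserves $\rho$, hence $\D^n$ and every condition $z_\ell\neq 0$; thus $\sigma_\nu\in\Aut(\D^n_\slb)$. Using \eqref{eq-prop2} and \eqref{eq-expprop}, $\sigma_\nu(z)^A=\exp(2\pi i A^{-1}\nu)^A\gdot z^A=\exp(2\pi i\nu)\gdot z^A=z^A$, so $\Phi_A\circ\sigma_\nu=\Phi_A$. Since $\sigma_\mu\circ\sigma_\nu=\sigma_{\mu+\nu}$ and $\sigma_\nu=\id$ iff $A^{-1}\nu\in\Z^n$ iff $\nu\in A\Z^n$, we get $\Gamma\cong\Z^n/A\Z^n$, whose order is the lattice index $[\Z^n:A\Z^n]=\det A$; this already yields the final cardinality assertion. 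For the covering structure I would first treat the torus: via $\Phi_A\circ\exp=\exp\circ A$ and the fact that $A$ is a linear isomorphism, $\Phi_A\colon(\cx^*)^n\to(\cx^*)^n$ is a regular covering with $\det A$ sheets whose deck transformations are the rotations by the finite group $\{u\in(\cx^*)^n:u^A=\one^T\}=\{\exp(2\pi i A^{-1}\nu):\nu\in\Z^n\}=\Gamma$. Passing to log-absolute coordinates $\xi=\log\rho(z)$, $\eta=\log\rho(w)$, the relation $w=z^A$ reads $\xi=A^{-1}\eta=(\det B)^{-1}B\eta$, so $w\in\Uu\cap(\cx^*)^n$ (i.e. $B\eta\prec 0$) holds exactly when $\xi\prec 0$, i.e. $z\in(\D^*)^n$; hence $(\D^*)^n=\Phi_A^{-1}(\Uu\cap(\cx^*)^n)\cap(\cx^*)^n$ is a saturated open set and $\Phi_A\colon(\D^*)^n\to\Uu\cap(\cx^*)^n$ is a covering on which $\Gamma$ acts transitively on fibers.

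The step I expect to be the \emph{main obstacle} is properness of $\Phi_A\colon\D^n_\slb\to\Uu$, where one must control the boundary behaviour created by the punctured coordinates indexed by $\slb$. I would argue by sequences: if $z^{(m)}\to z_0\in\ol{\D^n}$ and $\Phi_A(z^{(m)})\to w_0\in\Uu$, then continuity of $w\mapsto\rho(w)^B$ at $w_0\in\Uu\subseteq\cx^n_{\mathsf{K}(B)}$ together with the master identity gives $\abs{z_{0,k}}^{\det B}=(\rho(w_0)^B)_k<1$, so $z_0\in\D^n$. The delicate point is to exclude $z_{0,\ell_0}=0$ for some $\ell_0\in\slb$: by definition of $\slb$ there is $k_0\in\mathsf{K}(B)$ with $a^{k_0}_{\ell_0}>0$, and then $\Phi_A(z^{(m)})_{k_0}=(z^{(m)})^{a^{k_0}}\to 0$ because $A\succeq 0$ makes all exponents nonnegative and the remaining factors stay bounded in $\ol{\D^n}$; this would force $w_{0,k_0}=0$, contradicting $w_0\in\Uu\subseteq\cx^n_{\mathsf{K}(B)}$. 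Hence $z_0\in\D^n_\slb$, which proves properness.

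Finally I would assemble the conclusions. Properness makes the image closed in $\Uu$, and it contains $\Uu\cap(\cx^*)^n$, which is dense in the connected manifold $\Uu$ (its complement lies in finitely many coordinate hyperplanes), so $\Phi_A(\D^n_\slb)=\Uu$, giving surjectivity. Taking $Z_2=\Uu\setminus(\cx^*)^n$ and $Z_1=\D^n_\slb\setminus(\D^*)^n$, which are closed lower-dimensional complex-analytic subvarieties, the covering $\Phi_A\colon\D^n_\slb\setminus Z_1=(\D^*)^n\to\Uu\setminus Z_2=\Uu\cap(\cx^*)^n$ with transitive $\Gamma$-action established above shows, via Definition~\ref{def-quotienttype}, that $\Phi_A$ is a proper holomorphic map of quotient type with group $\Gamma$, and $\abs{\Gamma}=\det A$, completing the proof.
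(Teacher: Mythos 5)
Your proposal is correct, and its skeleton coincides with the paper's: the same ``master identity'' $\rho(\Phi_A(z))^B=\rho(z)^{BA}=\rho(z)^{(\det B)I}$ (legitimized on $\D^n_\slb$ exactly as you do, via the definition of $\slb$) shows $\Phi_A(\D^n_\slb)\subseteq\Uu$, and the same kernel computation $\sigma_\nu=\id\iff A^{-1}\nu\in\Z^n\iff\nu\in A(\Z^n)$ identifies $\Gamma\cong\Z^n/A(\Z^n)$. You diverge in three places, all legitimately. First, properness: the paper proves the global inclusion $\Phi_A^{-1}(\Uu)\subseteq\D^n_\slb$ for the polynomial extension $\Phi_A:\cx^n\to\cx^n$ and then notes that the preimage of a compact $K\subset\Uu$ is closed in $\cx^n$ and bounded; your sequential argument uses the identical two mechanisms (the master identity forcing $\abs{z_{0,k}}<1$, and the vanishing of the monomial $(z^{(m)})^{a^{k_0}}$ ruling out $z_{0,\ell_0}=0$ for $\ell_0\in\slb$, since $A\succeq 0$ keeps the other factors bounded), merely recast as a limit argument -- equivalent in substance. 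Second, the covering structure: the paper invokes the structure theory of proper holomorphic maps (a proper map is a covering over its regular values, with regularity supplied by Lemma~\ref{lem-PhiA'}) and obtains fiber-transitivity from the polar-coordinate computation $z=\rho(w)^{A^{-1}}\gdot\exp(iA^{-1}\theta)$ with the additive $2\pi\Z^n$ ambiguity in $\theta$; you instead build the covering from scratch via $\Phi_A\circ\exp=\exp\circ A$ on the algebraic torus and restrict to the saturated open set $(\D^*)^n=\Phi_A^{-1}(\Uu\cap(\cx^*)^n)\cap(\cx^*)^n$ read off in log-absolute coordinates. Your route is more self-contained (no appeal to branched-cover theory) and has the additional virtue of making the surjectivity onto $\Uu$ explicit (closed image by properness plus density of $\Uu\cap(\cx^*)^n$), a point the paper leaves largely implicit; the paper's route is shorter because the proper-map theorem does that gluing for free. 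Third, you cite $[\Z^n:A(\Z^n)]=\det A$ as the standard lattice-index fact, where the paper proves it twice (Smith canonical form; flat tori) -- acceptable, since the paper itself concedes that any proof uses some machinery. One half-sentence you should add: when you identify the fibers of $\Phi_A:\D^n_\slb\to\Uu$ over $\Uu\setminus Z_2$ with the torus-covering fibers, observe that such a fiber cannot contain a point with a zero coordinate, because every column of $A$ has a positive entry (a zero column would force $\det A=0$), so some component of $z^A$ would vanish; with that remark, the transitivity requirement of Definition~\ref{def-quotienttype}, stated for fibers taken in all of $\D^n_\slb$, follows exactly as you say.
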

Some related results may be found in \cite{nagelpramanik,zwonek99}.
\begin{proof}
By the conditions \eqref{eq-B}, we have $A =(\det B)B^{-1} \succeq 0$,
so $\Phi_A$ is defined on all of $\D^n_\slb$.
We first claim that the image $\Phi_A(\D^n_\slb)$ is contained in $\cx^n_{\mathsf{K}(B)}$. Indeed, if 
$z\in \Phi_A(\D^n_\slb)$, then $z_\ell{\neq}0$ if $a^k_\ell{\neq}0$ for some $k\in \mathsf{K}(B)$. 
Therefore the $k$-th element of the vector $z^A$, i.e.
\begin{equation}\label{eq-zak}
    z^{a^k}=z_1^{a^k_1}\dots z_n^{a^k_n}
\end{equation}
is nonzero, so $z^A\in \cx^n_{\mathsf{K}(B)}.$ Now notice that 
\[\Uu= \{z\in \cx^n: \rho(z)^B \text{  is defined and  } \rho(z)^B\prec \one^T \}\subset \cx^n_{\mathsf{K}(B)}, \]
since $\rho(z)^B$ (and $z^B$) are defined if and only if $z\in \cx^n_{\mathsf{K}(B)}.$ Therefore, as in the proof of Proposition~\ref{prop-uopen}, we have for each point $z\in \D^n_\slb $, that
\[\left(\rho(z)^A\right)^{B}= \rho(z)^{BA}=\rho(z)^{(\det B) I}\prec \one, \]
since $z\in \D^n$ and $\det B>0$. This shows that 
$\Phi_A : \D^n_\slb \to \Uu$ is a well defined holomorphic map. 

We now show that if we think of $\Phi_A$ as a map from $\cx^n$ to itself (recall that $A\succeq 0$), then we have
\begin{equation}
    \label{eq-preimage}\Phi_A^{-1}(\Uu)\subset \D^n_\slb.
\end{equation}
If $z\in \cx^n$ is such that $\Phi_A(z)=z^A\in \Uu$, then 
$\rho(z^A)^B$ is well defined, and $\rho(z^A)^B\prec \one^T$. The fact that
$\rho(z^A)^B$ is well defined is equivalent to $z^A\in \cx^n_{\mathsf{K}(B)}$, i.e., for each $k\in \mathsf{K}(B)$, we have the entry $z^{a^k}{\neq}0.$ Now from
\eqref{eq-zak} we see that whenever $a^k_\ell{\neq}0$ (so $a^k_\ell\geq 1$ since 
$A\succeq 0$), we must have $z_\ell{\neq}0$,
i.e. $z\in\cx^n_\slb$. The other condition $\rho(z^A)^B\prec \one^T$ on the point $z$ shows that 
\[\rho(z)^{(\det B)I}=\rho(z)^{BA} =\rho(z^A)^B \prec \one^T. \]
Since $\det B>0$, this shows that $\rho(z)\prec \one^T $, i.e. $z\in \D^n$. Together these conditions show that $z\in \D^n_\slb$, proving the desired inclusion of \eqref{eq-preimage}.

We can now show that $\Phi_A:\D^n_\slb\to\Uu$ is a proper holomorphic map.
Let $K \subset \Uu$ be compact. Since the topology induced on $K$ from $\Uu$ is 
the same as that induced from $\cx^n$, it follows that  $K$ is closed in $\cx^n$ (and bounded). As $\Phi_A$ is continuous, $\Phi_A^{-1} (K)$ is closed in $\cx^n$ and in fact $\Phi_A^{-1} (K) \subseteq \D^n_\slb$ by \eqref{eq-preimage}, so that it is bounded as well. Thus $\Phi_A^{-1} (K)$ is compact for every compact $K \subset \Uu$, which is to say $\Phi_A$ is proper. 

Now we want to verify that the proper holomorphic map $\Phi_A:\D^n_\slb \to \Uu$ is of quotient type
in the sense of Definition~\ref{def-quotienttype} with group
$\Gamma= \{\sigma_\nu: \nu\in \Z^n \},$
with $\sigma_\nu$ as in \eqref{eq-sigmanu} above. Let 
\[Z_1=Z_2= \{z^\one =0\} =\bigcup_{j=1}^n\{z\in \cx^n: z_j=0\}\]
be the union of the coordinate hyperplanes, which is an analytic variety in $\cx^n$ of codimension one.
Notice that the formula \eqref{eq-det2} shows that the set of regular points of the map $\Phi_A$ (i.e.
the set of points $\{z\in \D^n_\slb: \det\Phi_A'(z){\neq}0 \}$) contains $\D^n_\slb\setminus Z_1$, 
and from the fact that $\Phi_A(z)=z^A$, we see that the regular values of $\Phi_A$ contains the open set $\Uu\setminus Z_2$. Notice that both $\D^n_\slb\setminus Z_1$ and $\Uu\setminus Z_2$ are open subsets of the ``algebraic torus" $(\cx^*)^n$, and
it is clear that $\Phi_A$ maps $(\cx^*)^n$ into itself (and is even a group homomorphism, if $(\cx^*)^n$ is given the group structure from the elementwise multiplication operation $\gdot$). Since a proper holomorphic map
is a covering map restricted to its regular points,  it now follows that 
\[\Phi_A: \D^n_\slb\setminus Z_1\to \Uu\setminus Z_2 \]
is a holomorphic covering map. We need to show that it is regular. 

Let $w\in \Uu\setminus Z_2$ have polar representation $w=\rho(w)\gdot \exp(i\theta)$, where $\theta\in \rl^n$.
Let $z\in \D^n_\slb$ be such that $\Phi_A(z)=z^A=w$. Comparing the radial and angular parts, we see that
\[z=\rho(w)^{A^{-1}}\gdot \exp\left( i A^{-1}\theta\right). \]
Now notice that the angular vector $\theta$ is known only up to an additive ambiguity of $2\pi \Z^n$, i.e., two values of $\theta$ corresponding to the same $w$ differ by $2\pi \nu$ for some integer vector $\nu\in \Z^n$. Therefore, two 
different preimages $z,z'$ of the point $w$ (correspoding to the choices $\theta$ and $\theta+2\pi\nu$ of the angular vector in the polar representation of $w$) are related by 
\begin{align*}
   z' &= \rho(w)^{A^{-1}}\gdot \exp\left( i A^{-1}(\theta+2\pi\nu)\right) 
   = \rho(w)^{A^{-1}}\gdot \exp\left( i A^{-1}\theta\right)\gdot \exp\left(2\pi i A^{-1}\nu\right)\\
   &= \exp\left(2\pi i A^{-1}\nu\right)\gdot z\\
   &=\sigma_\nu(z).
\end{align*}
which shows that the group $\Gamma$ acts transitively on the fiber $\Phi^{-1}(w)$.

Now we want to show that the  group $\Gamma$ has exactly $\det A$ elements.
Consider the map $\psi: \Z^n\to\Gamma$ given by $\psi(\nu)= \sigma_\nu. $
For any $\nu, \mu \in \Z^n$ and any $z \in \D^n_\slb$ we find that 
\begin{equation*}
\begin{split}
    \psi(\nu+\mu)(z)=\sigma_{\nu + \mu}(z) &=\exp \left(2 \pi i A^{-1}(\nu + \mu) \right) \gdot z \\
    &= \exp \left(2 \pi i A^{-1}\nu \right)\gdot \left( \exp\left(2 \pi i A^{-1} \mu \right) \gdot z \right) \\
    &= \sigma_\nu \circ \sigma_\mu (z)\\
    &=\psi(\nu) \circ \psi(\mu) (z).
\end{split}
\end{equation*}
Hence $\psi : \Z^n \to \Gamma$ is a  surjective group homomorphism, and so we have an isomorphism
\[\ol{\psi}: \Z^n/\ker\psi\to \Gamma. \]
given by $\psi([\nu])=\sigma_\nu$ where $[\nu] \in \zb^n/ \ker \psi$ is the class of $\nu \in \zb^n$. Notice that $\nu \in \ker \psi$ if and only if  $\exp(2 \pi i A^{-1} \nu) = \one^T$. This means that 
$A^{-1}\nu\in \Z^n$, so it follows that $\ker \psi = A(\zb^n)$. Therefore, the
	group $\Gamma$ is isomorphic to the quotient $\Z^n/A(\Z^n)$ by the isomorphism $ \ol{\psi}: \Z^n/A(\Z^n)\to \Gamma$
	
To complete the proof it is sufficient to recall that for any matrix $A\in M_n(\Z)$ with $\det A{\neq}0$ we have 
\begin{equation}
    \label{eq-card}
    \abs{\Z^n/A(\Z^n)}=\abs{\det A}.
\end{equation}
For completeness we give two proofs of \eqref{eq-card},  neither of which unfortunately  avoids high technology. The first (see \cite{nagelpramanik} and \cite[Theorem~2.1]{zwonek99}) is based on the Smith Canonical Form 
of an integer matrix (see \cite[pp. 53 ff.]{munkres}): for any $A \in M_n (\zb)$ there exist $P,Q \in GL_n(\zb)$ and a diagonal matrix $D \in M_n(\zb)$, say $D = \mathrm{diag}(\delta_1 ,\dots ,\delta_n)$, such that $A = PDQ$. 
Consider the automorphism of $\Z^n$ given by $x\mapsto P^{-1}x$. This maps $A(\Z^n)$ to $D(\Z^n)$ isomorphically, and therefore leads to an isomorphism $\Z^n/A(\Z^n)$ with $\Z^n/D(\Z^n)$, but $\Z^n/D(\Z^n)$ is isomorphic to the product abelian group $\prod_{j=1}^n\Z/\delta_j\Z$, which has exactly 
$\prod_{j=1}^n\abs{\delta_j}$ elements. On the other hand note that 
\[\abs{\det A}=\abs{ \det(PDQ)}= \abs{\det P\det D\det Q}= \abs{\pm 1\cdot \prod_{j=1}^n \delta_j \cdot \pm 1},\]
which completes the first proof.

The second proof of \eqref{eq-card} is geometric, and based on noticing that the inclusion of abelian groups $A(\Z^n)\hookrightarrow \Z^n$ gives rise to a covering map of  tori
\begin{equation}
    \label{eq-toricover}
    p:\rl^n/A(\Z^n)\to \rl^n/\Z^n
\end{equation}
given by $x+A(\Z^n)\mapsto x+\Z^n$. If we endow $\rl^n$ with the Euclidean metric (thought of as a Riemannian metric), we obtain quotient (flat) Riemannian structures
on both $\rl^n/A(\Z^n)$ and $\rl^n/\Z^n$, so that the covering map \eqref{eq-toricover} is actually a local isometry. Therefore, the number of sheets of this covering map coincides with the ratio $\mathrm{vol}(\rl^n/A(\Z^n))/\mathrm{vol}( \rl^n/\Z^n)$. But we know that $\mathrm{vol}( \rl^n/\Z^n)$ is 1, being also the volume of the unit parallelepiped, and similarly
$\mathrm{vol}(\rl^n/A(\Z^n))$ is the volume of the image of the unit parallelepiped under the action of $A$, which is therefore $\abs{\det A}$. On the other hand, by covering space theory, we see that the number of sheets is equal to the order of the group of deck transformations of the 
covering $p$, which is in turn isomorphic to $\pi_1(\rl^n/\Z^n)/p_*\pi_1(\rl^n/A(\Z^n))=\Z^n/A(\Z^n)$.\end{proof}

\subsection{Rationality of the Bergman kernel of $\Uu$}
 For a domain $\Omega\subset \cx^n$, denote by $\rat\left(\Omega\right)$
 the rational functions of $\cx^n$ restricted to $\Omega$.
 An element $f$ of $\Omega$ is defined on $\Omega\setminus Z_f$, where $Z_f$ is an affine algebraic variety in $\cx^n$.
Algebraically,
$\rat\left(\Omega\right)$ is a field, and is isomorphic to the field of rational functions
$\cx(z_1,\dots,z_n)$ in $n$ indeterminates. The map $\Phi_A:\D^n_\slb\to\Uu$ of \eqref{eq-PhiA} induces a mapping of fields
\[ \Phi_A^*: \rat\left(\Uu\right)\to \rat\left(\D^n_\slb\right)\]
given by $\Phi_A^*(f)= f\circ \Phi_A$.
Denote by $k$ the image 
\begin{equation} \label{eq-kfield}
k=\Phi_A^*(\rat\left(\Uu\right))\subset\rat\left(\D^n_\slb\right).
\end{equation}
Then $k$ is 
clearly a subfield of $\rat\left(\D^n_\slb\right)$. The following lemma records a special case of a very 
general phenomenon:
\begin{lem}
\label{lem-galois} The field extension $\rat\left(\D^n_\slb\right)/k$ is  Galois, and the map
\begin{equation}
    \label{eq-mapj}
\Theta:\Gamma\to \gal\left(\rat\left(\D^n_\slb\right)/k\right), \quad \Theta(\sigma)(f)=f\circ \sigma^{-1} 
\end{equation}
 where $f\in \rat\left(\D^n_\slb\right), \sigma\in \Gamma $, is an isomorphism of the group of deck transformations of the proper map $\Phi_A$ with the 
 Galois group of the extension. 
\end{lem}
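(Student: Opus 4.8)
The plan is to identify $\Phi_A^*$ with the standard way a Galois covering induces a Galois field extension, and to verify each clause of the Galois correspondence directly using the structure of $\Phi_A$ established in Theorem~\ref{thm-geometry}. First I would observe that since $\Phi_A:\D^n_\slb\to\Uu$ is of quotient type with group $\Gamma$, the relation $\Phi_A\circ\sigma=\Phi_A$ holds on all of $\D^n_\slb$ for every $\sigma\in\Gamma$ (noted after Definition~\ref{def-quotienttype}). Hence for any $f\in\rat(\Uu)$ and any $\sigma\in\Gamma$ we have $(\Phi_A^*f)\circ\sigma^{-1}=f\circ\Phi_A\circ\sigma^{-1}=f\circ\Phi_A=\Phi_A^*f$, so every element of $k$ is fixed by $\Theta(\sigma)$. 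This shows $\Theta$ lands in $\gal(\rat(\D^n_\slb)/k)$, and that $k$ is contained in the fixed field of $\Theta(\Gamma)$.

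Next I would check that $\Theta$ is a group homomorphism: for $\sigma,\tau\in\Gamma$ and $f\in\rat(\D^n_\slb)$ we have $\Theta(\sigma)\Theta(\tau)(f)=(f\circ\tau^{-1})\circ\sigma^{-1}=f\circ(\sigma\tau)^{-1}=\Theta(\sigma\tau)(f)$, so the contravariance in the formula $f\mapsto f\circ\sigma^{-1}$ is exactly what makes $\Theta$ covariant. Injectivity is the crucial point: if $\Theta(\sigma)=\id$ then $f\circ\sigma^{-1}=f$ for all rational $f$, and taking $f$ to be the coordinate functions $z_1,\dots,z_n$ forces $\sigma^{-1}=\id$, hence $\sigma=\id$. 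Thus $\Theta$ is an injective homomorphism of $\Gamma$ into the finite automorphism group $\Theta(\Gamma)\subset\Aut(\rat(\D^n_\slb)/k)$.

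To conclude that the extension is Galois and that $\Theta$ is an isomorphism onto the full Galois group, I would invoke Artin's theorem on finite groups of field automorphisms: if $G$ is a finite group of automorphisms of a field $L$ with fixed field $L^G$, then $L/L^G$ is Galois with group exactly $G$ and $[L:L^G]=\abs{G}$. Applying this with $L=\rat(\D^n_\slb)$ and $G=\Theta(\Gamma)$ (a finite group by the previous paragraph, of order $\det A=\abs{\Gamma}$ by Theorem~\ref{thm-geometry}), the extension $\rat(\D^n_\slb)/\rat(\D^n_\slb)^{\Theta(\Gamma)}$ is Galois with group $\Theta(\Gamma)\cong\Gamma$. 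It therefore remains only to identify the fixed field $\rat(\D^n_\slb)^{\Theta(\Gamma)}$ with $k$, for which the containment $k\subseteq\rat(\D^n_\slb)^{\Theta(\Gamma)}$ is already established above.

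The main obstacle is the reverse inclusion $\rat(\D^n_\slb)^{\Theta(\Gamma)}\subseteq k$, i.e. showing that every $\Gamma$-invariant rational function on $\D^n_\slb$ descends to a rational function on $\Uu$ via $\Phi_A$. The strategy here is to use that $\Phi_A$ is a quotient map off the analytic sets $Z_1,Z_2$: a $\Gamma$-invariant function $g$ is constant on fibers $\Phi_A^{-1}(w)$, so the set-theoretic factorization $g=\tilde g\circ\Phi_A$ defines a function $\tilde g$ on $\Uu\setminus Z_2$, which is holomorphic there because $\Phi_A$ is a local biholomorphism on $\D^n_\slb\setminus Z_1$. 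The delicate part is verifying that $\tilde g$ is in fact \emph{rational} on $\Uu$, not merely holomorphic off $Z_2$; I would handle this either by a direct degree/counting argument via Artin's theorem (since $[\rat(\D^n_\slb):k]$ equals $\abs{\Gamma}=\det A$, the fixed field cannot be strictly larger than $k$ once one knows $\rat(\D^n_\slb)/k$ has degree exactly $\det A$), or by an explicit symmetrization using the monomial structure $z\mapsto z^A$, which exhibits $\rat(\D^n_\slb)$ as generated over $k$ by the coordinate functions subject to the relations coming from $A$. The former is cleaner: once $[\rat(\D^n_\slb):k]\le\abs{\Gamma}$ is known (which follows since $\rat(\D^n_\slb)=k(z_1,\dots,z_n)$ and each $z_j$ satisfies a polynomial of degree dividing $\det A$ over $k$, its coefficients being the elementary symmetric functions over the $\Gamma$-orbit, which are $\Gamma$-invariant and hence in $k$), Artin's theorem forces equality of $k$ with the fixed field.
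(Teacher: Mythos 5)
Your overall strategy via Artin's theorem is viable and genuinely different from the paper's, but as written it has a gap at precisely the step that carries all the weight: the degree bound $[\rat(\D^n_\slb):k]\leq \abs{\Gamma}$. Your justification fails in two ways. First, knowing that each generator $z_j$ satisfies a polynomial over $k$ of degree dividing $\det A$ only bounds the degree of the compositum by $(\det A)^n$ (degrees multiply in a tower, they do not max out), so the inequality $[\rat(\D^n_\slb):k]\leq \det A=\abs{\Gamma}$ simply does not follow from what you state. Second, your parenthetical argument that the coefficients of the orbit polynomial lie in $k$ ``being the elementary symmetric functions over the $\Gamma$-orbit, which are $\Gamma$-invariant and hence in $k$'' is circular: $\Gamma$-invariance only places those coefficients in the fixed field of $\Theta(\Gamma)$, and the identity of that fixed field with $k$ is exactly what your Artin argument is trying to establish. (This second defect is repairable without circularity: the symmetric functions of the orbit of $z_j$ collapse to a single power $z_j^{d_j}$ with $d_j\mid \det A$, and one can verify directly that such powers are monomials in $w_1,\dots,w_n$, as in the identity $z_j^{\det A}=w^{c^j}$ with $c^j$ the $j$-th row of $\adj A$; but the multiplicativity problem in the first defect remains.) A correct proof of the needed bound runs as follows: by \eqref{eq-prop5}, two Laurent monomials $z^\alpha, z^{\alpha'}$ with $\alpha-\alpha'\in (\Z^n)^\dagger A$ differ by the factor $z^{\beta A}=w^\beta\in k$, so the $k$-span $V$ of all Laurent monomials satisfies $\dim_k V\leq \abs{(\Z^n)^\dagger/(\Z^n)^\dagger A}=\det A$ by (the transpose of) \eqref{eq-card}; since $V$ is a finite-dimensional $k$-algebra that is an integral domain, it is a field containing $k$ and $z_1,\dots,z_n$, hence $V=\rat(\D^n_\slb)$ and $[\rat(\D^n_\slb):k]\leq\det A$. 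Your alternative first route (descending an invariant function through $\Phi_A$) is also left incomplete at exactly the points you flag yourself: poles of $g$, extension of the descended function across $Z_2$, and its rationality are all nontrivial and not carried out.

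For comparison, the paper sidesteps both the fixed-field identification and any degree count. It proves the extension is Galois by exhibiting each $z_j$ as a root of $t^{\det A}-w^{c^j}\in k[t]$, all of whose roots $\omega z_j$ (with $\omega^{\det A}=1$) already lie in $\rat(\D^n_\slb)$; it then proves surjectivity of $\Theta$ constructively: any $\gamma$ in the Galois group must send $z_j$ to $\omega_j z_j$ for roots of unity $\omega_j$, the resulting diagonal unitary $\theta$ preserves $\D^n_\slb$ and satisfies $\Phi_A\circ\theta=\Phi_A$, hence $\theta\in\Gamma$ because $\Gamma$ is the \emph{full} deck group (as noted after Definition~\ref{def-quotienttype}), and $\gamma=\Theta(\theta^{-1})$. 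With the lattice-index repair above, your Artin route does close and is arguably more conceptual, since it never inspects individual Galois elements; but as submitted the proposal does not constitute a proof.
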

\begin{proof}
Denote by $w_j$ the $j$-th component of the map $\Phi_A$, where $1\leq j \leq n$. Notice that then $w_j$ can be 
identified with the function $\Phi_A^*(\pi_j)=\pi_j\circ \Phi_A$ where $\pi_j$ is the $j$-th coordinate function on $\Uu$. Since $\rat\left(\Uu\right)=\cx(\pi_1,\dots, \pi_n)$, the field $k$ is generated over $\cx$ by the functions 
$w_1,\dots, w_n$, i.e. $k=\cx(w_1,\dots, w_n)$. Denoting the coordinate functions on 
$\D^n_\slb$ by $z_1,\dots, z_n$, we see that $\rat\left(\D^n_\slb\right)= k(z_1,\dots, z_n)$. To 
show that $\rat\left(\D^n_\slb\right)/k$ is Galois, we need to show that each $z_j$ is algebraic over $k$, and all the conjugates of $z_j$ over $k$ are already present in $\rat(\D^n_\slb).$

Notice that by definition for each $1\leq j \leq n$, we have
$w_j=z_1^{a^j_1}\dots z_n^{a^j_n}=z^{a^j}$, which follows from the fact that 
$w=\Phi_A(z)=z^A$. Therefore we have
\[z^{\det A\cdot I} = z^{\adj A\cdot A} =(z^A)^{\adj A} =w^{\adj A}. \]
Denoting the $j$-th row of the integer matrix $\adj A$ by $c^j$ we see that
\[z_j^{\det A} = w^{c^j}\in k,\]
so that $z_j$ is a root of the polynomial $t^{\det A}-w^{c^j}\in k[t]$. Further,
all the roots of this polynomial (which are of the form $\omega\cdot z_j $ for a 
$(\det A)$-th root of unity, $\omega\in \cx$) are clearly in $\rat\left(\D^n_\slb\right)$. It follows that 
$\rat\left(\D^n_\slb\right)$ is Galois over the subfield $k$.

For each $\sigma\in \Gamma$ it is clear that  $\Theta(\sigma)$ is an automorphism of the field $\rat\left(\D^n_\slb\right)$. We need to show that it fixes $k$. But for $h=\Phi_A^*g\in k$, where $g\in \rat\left(\Uu\right)$, we have
\[ \Theta(\sigma)h=h\circ\sigma^{-1}=g\circ \Phi_A\circ \sigma^{-1}=g\circ \Phi_A=h,\]
since $\sigma$ is a deck transformation of $\Phi_A$. It follows that $\Theta(\sigma)\in \gal\left(\rat\left(\D^n_\slb\right)/k\right)$. It is easily verified by direct computation that $\Theta$ is an injective group homomorphism. To show that $\Theta$ is surjective,  let $\gamma\in \gal\left(\rat\left(\D^n_\slb\right)/k\right)$, and we have to find a deck-transformation $\theta \in \Gamma$ such that $\gamma(f)=f\circ \theta$, so that $\gamma=\Theta( \theta^{-1})$.
Let $\theta_j\in \rat\left(\D^n_\slb\right)$ be the function $\theta_j=\gamma(z_j)$, where $z_j\in \rat\left(\D^n_\slb\right)$ is the $j$-th coordinate function.
As we saw above, then we have $\theta_j=\omega_j\cdot z_j$, where $\omega_j$ is a $(\det A)$-th root of unity. Then $\theta=(\theta_1,\dots,\theta_n)$ defines a diagonal unitary mapping of $\cx^n$
and therefore maps $\D^n_\slb$ (a product of discs and punctured discs) into itself. Also,  the 
$j$-th component of $\Phi_A\circ \theta$ is given 
by 
\[\theta^{a^j}=\theta_1^{a^j_1}\dots \theta_n^{a^j_n}= \gamma(z_1)^{a^j_1}\dots \gamma(z_n)^{a^j_n}=\gamma(z^{a^j})=\gamma(w_j)=w_j,
\]
which is precisely the $j$-th component of $\Phi_A$.
Therefore, $\Phi_A\circ \theta=\Phi_A$, and $\theta\in \Gamma$. Finally, for a rational function
$f=f(z_1,\dots,z_n)\in \rat\left(\D^n_\slb\right)$, we have
\[\gamma(f)=f(\gamma(z_1),\dots, \gamma(z_n))=f(\theta_1,\dots,\theta_n)=f\circ\theta. \]
\end{proof}
\begin{prop}\label{prop-rationality}
The Bergman kernel of $\Uu$ is a rational function.
\end{prop}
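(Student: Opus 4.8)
The plan is to combine Bell's transformation formula for proper holomorphic maps with the Galois-theoretic descent furnished by Lemma~\ref{lem-galois}. Write $K_\Omega$ for the Bergman kernel of a domain $\Omega$ and set $u=\det\Phi_A'$. Since $\Phi_A:\D^n_\slb\to\Uu$ is proper of quotient type with finite deck group $\Gamma$ (Theorem~\ref{thm-geometry}), Bell's transformation formula (see \cite{belltransactions,bellduke}; this could equally be replaced by the transformation law developed later in the paper) gives, for $z,w\in\D^n_\slb$,
\begin{equation*}
u(z)\,\overline{u(w)}\;K_\Uu\bigl(\Phi_A(z),\Phi_A(w)\bigr)=\sum_{\sigma\in\Gamma}\overline{\det\sigma'(w)}\;K_{\D^n_\slb}(z,\sigma(w)).
\end{equation*}
My strategy is to show that the right-hand side, hence the function $\widetilde K(z,w):=K_\Uu(\Phi_A(z),\Phi_A(w))$, is a rational function of $(z,\bar w)$, and then to descend this rationality through $\Phi_A$ using the Galois correspondence of Lemma~\ref{lem-galois}.

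For the first step I would verify that each ingredient is rational. Because $\D^n_\slb$ differs from the polydisc $\D^n$ only by a thin analytic set across which $L^2$ holomorphic functions extend (a removable-singularity argument in each punctured-disc factor), one has $A^2(\D^n_\slb)=A^2(\D^n)$ and therefore $K_{\D^n_\slb}=K_{\D^n}=\prod_{j=1}^n \pi^{-1}(1-z_j\overline{w_j})^{-2}$, which is manifestly rational. By \eqref{eq-sigmanu} each $\sigma\in\Gamma$ is a diagonal linear map $\sigma(w)=\varepsilon\gdot w$ whose entries $\varepsilon$ are roots of unity, so $\det\sigma'$ is a constant and $\sigma(w)$ is a root-of-unity rescaling of $w$; hence each summand is rational in $(z,\bar w)$. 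Finally Lemma~\ref{lem-PhiA'} gives $u(z)=\det A\cdot z^{\one A-\one}$, a Laurent monomial with integer exponents, so $1/u$ is again rational. Dividing, $\widetilde K$ is rational in $(z,\bar w)$.

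For the descent I observe that $\widetilde K(z,w)=K_\Uu(\Phi_A z,\Phi_A w)$ is, by construction, invariant under $z\mapsto \sigma(z)$ and $w\mapsto \tau(w)$ for all $\sigma,\tau\in\Gamma$, since $\Phi_A\circ\sigma=\Phi_A$. Viewing $\widetilde K$ as an element of the rational function field $\cx(z_1,\dots,z_n,\overline{w_1},\dots,\overline{w_n})$ in $2n$ indeterminates, these invariances are honest identities of rational functions; note the substitution $w\mapsto\tau(w)$ acts on $\bar w$ again by an element of $\Gamma$ (the conjugate rescaling, which by \eqref{eq-sigmanu} is $\sigma_{-\nu}$). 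Applying the computation of Lemma~\ref{lem-galois} twice—once in the holomorphic variables $z$ over the base field $\cx(\bar w)$, and once in the antiholomorphic variables $\bar w$—identifies the fixed field of $\Gamma\times\Gamma$ with $\cx(\zeta_1,\dots,\zeta_n,\overline{\eta_1},\dots,\overline{\eta_n})$, where $\zeta_j=z^{a^j}$ and $\eta_j=w^{a^j}$ are the components of $\Phi_A(z)$ and $\Phi_A(w)$. Thus $\widetilde K=R(\Phi_A(z),\overline{\Phi_A(w)})$ for some rational $R$, and since $\Phi_A$ is surjective onto $\Uu$ this forces $K_\Uu(\zeta,\eta)=R(\zeta,\bar\eta)$, i.e.\ $K_\Uu$ is rational.

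The main obstacle I anticipate is making the descent step precise. One must check that the $\Gamma$-invariance of $\widetilde K$ in the antiholomorphic variables genuinely corresponds to a $\Gamma$-action of the type covered by Lemma~\ref{lem-galois}, and that the fixed field of the product action is exactly $\cx(\zeta,\overline{\eta})$ rather than something larger. This amounts to rerunning the field-theoretic argument of Lemma~\ref{lem-galois} over the base field $\cx(\bar w)$ (and, symmetrically, its conjugate), which is routine because $\cx$ is algebraically closed and all the requisite $(\det A)$-th roots of unity are present; nonetheless it is the point at which the holomorphic and antiholomorphic halves of the kernel must be separated, treated independently, and then recombined.
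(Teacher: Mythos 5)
Your proposal is correct, and it departs from the paper's proof at both ends of the argument, in ways worth recording. The paper pulls back only the \emph{first} variable through $\Phi_A$, which forces it to work with the local inverses $\psi(w)=w^{A^{-1}}$ and to compute $\det(\sigma\circ\psi)'$ via locally defined rational matrix powers; it then fixes $z$, applies Lemma~\ref{lem-galois} once to the one-variable function $L(\zeta)=\sum_{\sigma\in\Gamma}K_{\D^n}\left(\ol{z},\sigma(\zeta)\right)\det\sigma\cdot\zeta^\one$ to conclude that $K_\Uu(z,\cdot)$ is rational for each fixed $z$, and finally upgrades separate rationality to joint rationality using the Reinhardt symmetry $K_\Uu(z,w)=\wt{K}(z\gdot\ol{w})$. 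You instead symmetrize Bell's formula in \emph{both} variables (your two-sided formula follows from the one-sided one by substituting $w\mapsto\Phi_A(w)$ and using $\det\psi'(\Phi_A(w))=\left(\det\Phi_A'(w)\right)^{-1}$, so it is on the same footing as what the paper invokes), which makes every Jacobian factor a global monomial and eliminates the local inverses entirely, and you replace the Reinhardt step by a second Galois descent in the antiholomorphic variables. The one point you rightly flagged — rerunning Lemma~\ref{lem-galois} over the enlarged base — is indeed routine and closes as follows, one variable group at a time: $\cx(z,\ol{w})$ is generated over $k_1=\cx(\Phi_A(z),\ol{w})$ by $z_1,\dots,z_n$ with $z_j^{\det A}\in k_1$, so $[\cx(z,\ol{w}):k_1]\leq\det A$, while $\Gamma$ acts faithfully on $\cx(z,\ol{w})$ fixing $k_1$ (faithfulness is already visible on $\cx(z)$), so Artin's theorem gives $[\cx(z,\ol{w}):\mathrm{Fix}(\Gamma)]=\abs{\Gamma}=\det A$; since $k_1\subseteq\mathrm{Fix}(\Gamma)$, comparing degrees forces $\mathrm{Fix}(\Gamma)=k_1$, and the symmetric argument in $\ol{w}$ (using, as you observe, that conjugation carries $\sigma_\nu$ to $\sigma_{-\nu}$, so the conjugate action is again by $\Gamma$) completes the descent to $\cx(\Phi_A(z),\ol{\Phi_A(w)})$. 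What each route buys: the paper's version needs no field theory beyond Lemma~\ref{lem-galois} exactly as stated, at the cost of invoking the Reinhardt structure of $\Uu$ in a final separate step; yours delivers joint rationality in $(z,\ol{w})$ in one stroke, is independent of Reinhardt symmetry, and therefore transports verbatim to other quotient-type monomial settings where the target kernel has no rotational symmetry to exploit.
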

\begin{proof}Choosing a local continuous branch $\theta_j$ of $\arg w_j$ for each coordinate 
of $w\in \Uu$, we can write $w=\rho(w)\gdot \exp(i\theta)$ for $\theta\in \rl^n$. Then 
$\psi(w)= w^{A^{-1}}=\rho(w)^{A^{-1}}\gdot\exp(iA^{-1}\theta)$ is a 
 local inverse to the proper holomorphic map $\Phi_A:\D^n_\slb\to \Uu$.
 Since $\Phi_A$ is of the quotient type,
it follows that  the local branches of $\Phi^{-1}_A$ are $ \{\sigma\circ \psi: \sigma\in \Gamma \}.$
 By Theorem~\ref{thm-geometry}, the map $\sigma:\D^n_\slb\to \D^n_\slb$ is the restriction of a unitary linear map on $\cx^n$, so, noting that 
the argument behind the proof of \eqref{eq-det2} also applies for (locally defined)
rational matrix powers, we have
\begin{align*}
    \det (\sigma\circ \psi)'(w)&= \det\sigma'\cdot \det \psi'(w)=\det \sigma\cdot\det A^{-1}\cdot w^{\one A^{-1}-\one}\\
    &=\det \sigma\cdot\det A^{-1}\cdot(w^{A^{-1}})^\one\cdot \frac{1}{w^\one}= \frac{\det \sigma\cdot\psi(w)^\one}{\det A\cdot w^\one}.
\end{align*}

Therefore, by the Bell transformation formula, for $z\in \D^n_\slb$  and $w\in \Uu$, the Bergman kernels $K_\Uu$ and 
$K_{\D^n_\slb}=K_{\D^n}$ are related by 
\begin{align*}
    \det\Phi'_A(z)\cdot K_{\Uu}\left(\Phi_A(z),w\right)&= \sum_{\sigma\in \Gamma} K_{\D^n}\left(z, \sigma\circ\psi(w)\right)\cdot \ol{\det (\sigma\circ \psi)'(w)}\\
    &= \frac{1}{\det A\cdot \ol{w^\one}}\cdot  \sum_{\sigma\in \Gamma} K_{\D^n}\left(z, \sigma\circ\psi(w)\right)\cdot  \ol{\det \sigma\cdot\psi(w)^\one}
\end{align*}
For a fixed $z\in \D^n_\slb$, consider the function $L$ on $\D^n_\slb$ given by 
\[ L(\zeta)=\sum_{\sigma\in \Gamma} K_{\D^n}\left(\ol{z}, {\sigma(\zeta)}\right)\cdot  \det \sigma\cdot\zeta^\one, \]
where $\ol{z}=(\ol{z_1},\dots, \ol{z_n})\in \D^n_\slb $. Recalling that
	\begin{equation}
	    \label{eq-bpdsk}
	     K_{\D^n}(z,w)= \frac{1}{\pi^n}\prod_{j=1}^n \frac{1}{(1-z_j\ol{w_j})^2},
	\end{equation}
we see that $L\in \rat(\D^n_\slb)$, and 
\[  K_{\Uu}\left(z^A,w\right)= \frac{1}{(\det A)^2\cdot z^{\one A-\one}\cdot\ol{w^\one} }\cdot \ol{L(\psi(w))}.\]
We now claim that $L\in k$, where $k\subset \rat(\D^n_\slb)$ is as in \eqref{eq-kfield}. Since the extension
$\rat(\D^n_\slb)/k$ is Galois, it suffices to show that for each field-automorphism $\tau \in \gal(\rat(\D^n_\slb)/k)$, 
we have $\tau(L)=L$. But by Lemma~\ref{lem-galois}, for each $\tau\in \gal(\rat(\D^n_\slb)/k)$ there is a $\theta\in \Gamma$ such that $\tau(L)=L\circ \theta$. Notice that thanks to \eqref{eq-sigmanu}, we can think of the map $\theta:\D^n_\slb\to \D^n_\slb$ as a linear map 
of $\cx^n$  represented by a diagonal matrix. Therefore we have $(\theta(\zeta))^\one=\det \theta\cdot\zeta^\one. $ Consequently
\begin{align*}
    \tau(L)(\zeta)=L(\theta(\zeta))&= \sum_{\sigma\in \Gamma} K_{\D^n}\left(\ol{z}, {\sigma(\theta(\zeta))}\right)\cdot  \det \sigma\cdot(\theta(\zeta))^\one\\
     &=\sum_{\sigma\in \Gamma} K_{\D^n}\left(\ol{z}, {(\sigma\circ\theta)(\zeta)}\right)\cdot  \det \sigma\cdot\det\theta\cdot\zeta^\one\\
     &=\sum_{\sigma\in \Gamma} K_{\D^n}\left(\ol{z}, {(\sigma\circ\theta)(\zeta)}\right)\cdot  \det (\sigma\circ\theta)\cdot\zeta^\one\\
     &=L(\zeta),
\end{align*}
establishing the claim. 

Now since $L\in k$, there is a function $R(z,\cdot)\in \rat(\Uu)$ such that $L=\Phi_A^* R(z,\cdot)$, i,e,
\[ L(\zeta)= R(z, \Phi_A(\zeta))= R(z, \zeta^A).\]
Therefore we have
\[  K_{\Uu}\left(z^A,w\right)= \frac{1}{(\det A)^2\cdot z^{\one A-\one}\cdot\ol{w^\one} }\cdot \ol{R(z,\Phi_A(\psi(w))}=\frac{1}{(\det A)^2\cdot z^{\one A-\one}\cdot\ol{w^\one} }\cdot \ol{R(z,w)},\]
which shows that for each fixed $z\in \D^n_\slb$, the function $K_{\Uu}\left(z,\cdot\right)$ is rational. 
By the Reinhardt symmetry of $K_\Uu$, there is a function $\wt{K}$ such that 
\[ K_{\Uu}\left(z,w\right)=\wt{K}(z_1\ol{w_1},\dots, z_n\ol{w_n})= \wt{K}(z\gdot \ol{w}).\]
Since for a fixed $z$, the function $w\mapsto\wt{K}(z\gdot \ol{w})$ is rational, it follows 
that the function $t\mapsto \wt{K}(t)=\wt{K}(t_1,\dots, t_n)$ is also rational. Therefore
the function $K_\Uu$ is rational on $\Uu\times \Uu$.
\end{proof}

 \section{Transformation of \texorpdfstring{$L^p$}{Lp}-Bergman spaces under quotient maps}
\label{sec-transformation}
\subsection{Definitions} In order to state our results, we introduce some terminology:

\begin{defn}  We say that a linear map $T$ between Banach spaces $(E_1, \norm{\cdot}_1)$  and $(E_2, \norm{\cdot}_2)$  is a \emph{homothetic isomorphism} 
		if it is a continuous bijection (and therefore has a continuous inverse) and
		there is a constant $C>0$ such that for each $x\in E_1$ we have 
		\[ \norm{Tx}_2 = C \norm{x}_1.\]
\end{defn}		
		
\begin{rem}		
If $T$ is a homothetic isomorphism between Banach spaces, then   $\tfrac{1}{\norm{T}}\cdot T$ is an isometric isomorphism of Banach spaces, so a homothetic isomorphism is simply the product of an isometric isomorphism and a scalar operator. In particular, a homothetic isomorphism between Hilbert spaces preserves angles, and in particular orthogonality of vectors. 
\end{rem}
		
		The following definition and facts are standard:
		 \begin{defn}
		  Let $\Omega\subset\cx^n$ be a domain, let $\lambda>0$ be a continuous function (the weight), and let $0<p<\infty$. Then we define
		 \[ L^p(\Omega, \lambda)= \left\{f:\Omega \to \cx \text{ measurable }: \int_\Omega \abs{f}^p \lambda\, dV < \infty\right\}\]
		 and
		 \[ A^p(\Omega, \lambda)= \left\{f:\Omega \to \cx \text{ holomorphic }: \int_\Omega \abs{f}^p\lambda\, dV < \infty\right\},\]
		 where the latter is called a \emph{weighted Bergman space}. If $p\geq 1$, $L^p(\Omega,\lambda)$ and $A^p(\Omega, \lambda)$ are 
		 Banach spaces with the natural \emph{weighted norm}, and $A^p(\Omega,\lambda)$ is a closed subspace of $L^p(\Omega,\lambda)$.
	\end{defn}

We will make extensive use of the following notion:
\begin{defn}
Given a group $G$ of biholomorphic automorphisms of a domain $\Omega\subset \cx^n$, and a space $\mathfrak{F}$ of functions on $\Omega$, we denote by $\left[ \mathfrak{F}\right]^G$ the subspace of $\mathfrak{F}$ consisting of functions which are $G$-invariant in the following sense
\begin{equation}
    \label{eq-gammainv}
   \left[\mathfrak{F}\right]^G = \{f\in \mathfrak{F} : f=  \sigma^\sharp(f) \text{ for all } \sigma \in G\},  
\end{equation}
where $\sigma^\sharp$ is the pullback induced by $\sigma$ as in \eqref{eq-sharp}.
\end{defn}

\begin{rem}
Interpreting $\mathfrak{F}$ as a space of holomorphic forms on $\Omega$ by associating $f\in \mathfrak{F}$ with the form $fdz_1\wedge \dots \wedge dz_n$, this simply says that the forms in $[\mathfrak{F}]^G$ are invariant under pullback by elements of $G$. 
\end{rem}

\subsection{Transformation of Bergman spaces} With the above definitions, we are ready to state and prove the following elementary facts. For completeness, we give details of the proofs.
	\begin{prop}
	\label{prop-homothety} 
	Let $\Omega_1,\Omega_2$ be domains in $\cx^n$, and let $\Phi:\Omega_1\to \Omega_2$ be a proper holomorphic map of quotient type with group $\Gamma\subset \mathrm{Aut}(\Omega_1)$. Then for $1<p<\infty$, the pullback map $\Phi^\sharp$ gives rise to a homothetic isomorphism
\begin{equation}\label{eq-phisharp1}
\Phi^\sharp: L^p(\Omega_2)\to \left[L^p\left(\Omega_1, \abs{\det \Phi'}^{2-p}\right)\right]^{\Gamma}.
\end{equation}
This restricts to a homothetic isomorphism
\begin{equation}
\label{eq-phisharp2}
\Phi^\sharp: A^p(\Omega_2)\to \left[A^p\left(\Omega_1, \abs{\det \Phi'}^{2-p}\right)\right]^{\Gamma}.
\end{equation}
		
	\end{prop}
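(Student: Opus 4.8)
The plan is to reduce everything to a single change-of-variables identity and then read off the homothety and the two containments. First I would fix $f\in L^p(\Omega_2)$ and compute the weighted norm of $\Phi^\sharp f=(f\circ\Phi)\cdot\det\Phi'$ directly. Since $\abs{\det\Phi'}^p\cdot\abs{\det\Phi'}^{2-p}=\abs{\det\Phi'}^2$ is exactly the real Jacobian of the holomorphic map $\Phi$, one obtains $\int_{\Omega_1}\abs{\Phi^\sharp f}^p\abs{\det\Phi'}^{2-p}\,dV=\int_{\Omega_1}\abs{f\circ\Phi}^p\abs{\det\Phi'}^2\,dV$. Because $Z_1,Z_2$ are lower-dimensional analytic (hence Lebesgue-null) sets and $\Phi\colon\Omega_1\setminus Z_1\to\Omega_2\setminus Z_2$ is a covering map with exactly $\abs{\Gamma}$ sheets, the area formula turns the right-hand side into $\abs{\Gamma}\int_{\Omega_2}\abs{f}^p\,dV$. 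This gives $\norm{\Phi^\sharp f}_{L^p(\Omega_1,\abs{\det\Phi'}^{2-p})}=\abs{\Gamma}^{1/p}\norm{f}_{L^p(\Omega_2)}$, so $\Phi^\sharp$ is well defined into the weighted space, bounded below, and injective, i.e. a homothetic embedding with constant $\abs{\Gamma}^{1/p}$.

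Next I would check that the image lies in the $\Gamma$-invariant subspace, which is pure functoriality of the pullback. From the chain rule one has $(\Phi\circ\sigma)^\sharp=\sigma^\sharp\circ\Phi^\sharp$ for every $\sigma\in\Gamma$, and since $\sigma$ is a deck transformation, $\Phi\circ\sigma=\Phi$ on all of $\Omega_1$. Hence $\sigma^\sharp(\Phi^\sharp f)=(\Phi\circ\sigma)^\sharp f=\Phi^\sharp f$, so $\Phi^\sharp f$ is $\Gamma$-invariant. When $f$ is holomorphic, $\Phi^\sharp f=(f\circ\Phi)\det\Phi'$ is visibly holomorphic on $\Omega_1$, which yields the corresponding membership for $A^p$.

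For surjectivity, given an invariant $g$ in the target I would produce its preimage by descending $h:=g/\det\Phi'$. On $\Omega_1\setminus Z_1$ the covering is unbranched, so $\det\Phi'\neq 0$ and $h$ is well defined there; a short computation shows that the $\sharp$-invariance $\sigma^\sharp g=g$ is exactly equivalent to the ordinary invariance $h\circ\sigma=h$, using $\det\Phi'(\sigma(z))\det\sigma'(z)=\det\Phi'(z)$ (again from $\Phi\circ\sigma=\Phi$). Since $\Gamma$ acts transitively on each fiber of $\Phi$ over $\Omega_2\setminus Z_2$, $h$ is constant on fibers and descends to a function $f$ on $\Omega_2\setminus Z_2$ with $f\circ\Phi=h$, i.e. $\Phi^\sharp f=g$ off the null set $Z_1$. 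The norm identity then gives $f\in L^p(\Omega_2)$, which already settles the $L^p$ statement \eqref{eq-phisharp1}, as measurability is insensitive to the removed null sets.

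The delicate point, which I expect to be the main obstacle, is the holomorphic case \eqref{eq-phisharp2}: the descended $f$ is a priori only holomorphic on $\Omega_2\setminus Z_2$ and lies in $L^p(\Omega_2)$, and for $p<2$ membership in $L^p$ does \emph{not} by itself force a holomorphic extension across a codimension-one set (for instance $1/z_1$ is $L^p$ near $\{z_1=0\}$ when $p<2$). The extension must instead be extracted from the hypothesis that $g$ is holomorphic on \emph{all} of $\Omega_1$ together with its invariance. Concretely, I would show $h=g/\det\Phi'$ extends holomorphically across $Z_1$: along the smooth part of each ramification component the Galois cover has a local normal form $z\mapsto(z_1^m,z_2,\dots,z_n)$ with $\det\Phi'\sim z_1^{m-1}$ and $\sigma\colon z_1\mapsto\zeta z_1$ for a primitive $m$-th root of unity $\zeta$; expanding $g=\sum_k c_k(z')z_1^k$, the invariance $g=\zeta\,(g\circ\sigma)$ forces $c_k=0$ unless $m\mid k+1$, so $g$ vanishes to order $\geq m-1$ and the zero of $\det\Phi'$ is exactly cancelled. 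The residual bad set has codimension $\geq 2$ and is removable by the second Riemann extension theorem, so $h\in\mathcal{O}(\Omega_1)$. Being holomorphic, $h$ is locally bounded, and since $\Phi$ is proper, $f=h\circ(\text{local inverse})$ is locally bounded near $Z_2$; Riemann's removable singularity theorem then extends $f$ holomorphically across $Z_2$. Thus $f\in A^p(\Omega_2)$ with $\Phi^\sharp f=g$, giving surjectivity and completing the proof.
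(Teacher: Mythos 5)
Your proposal is correct, and in its first three steps it runs along the same lines as the paper's proof: the norm identity $\norm{\Phi^\sharp f}^p_{L^p(\Omega_1,\abs{\det\Phi'}^{2-p})}=\abs{\Gamma}\cdot\norm{f}^p_{L^p(\Omega_2)}$ (the paper obtains it by decomposing $\Omega_2\setminus Z_2$ into evenly covered pieces rather than invoking the area formula, but the computation is identical), the $\Gamma$-invariance of the image via $(\Phi\circ\sigma)^\sharp=\sigma^\sharp\circ\Phi^\sharp$ and $\Phi\circ\sigma=\Phi$, and surjectivity by descent --- your global function $h=g/\det\Phi'$ is exactly what the paper builds patchwise as $f_j=\Psi^\sharp g_j$ in \eqref{eq-fj}, since $\Psi^\sharp g=h\circ\Psi$ for any local inverse $\Psi$, and your identity $\det\Phi'(\sigma(z))\det\sigma'(z)=\det\Phi'(z)$ is the same mechanism that makes the paper's $f_j$ independent of the choice of sheet. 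Where you genuinely depart from the paper is the holomorphic surjectivity in \eqref{eq-phisharp2}: the paper disposes of it in one sentence (the $f_j$ are pullbacks of $g$ under local holomorphic inverses, hence $f$ is holomorphic), which after gluing gives holomorphy only off $Z_2$ and leaves the extension across $Z_2$ implicit. For $p\geq 2$ that extension is automatic, since $f\in L^p\subset L^2_{\mathrm{loc}}$ near $Z_2$ and a locally square-integrable holomorphic function extends across an analytic set --- and $p\geq 2$ is all that the applications in Sections~\ref{sec-unbounded} and~\ref{sec-bounded} use, the range $p<2$ of Theorem~\ref{thm-main} being recovered by H\"older duality. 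Your argument instead treats all $1<p<\infty$ uniformly, and you correctly identify why a genuine argument is needed there ($L^p$-membership alone does not remove codimension-one singularities when $p<2$, as your $1/z_1$ example shows). Your mechanism --- the local normal form $z\mapsto(z_1^m,z')$ of the Galois cover at generic points of the critical divisor, where the $\sigma^\sharp$-invariance forces $c_k=0$ in $g=\sum_k c_k(z')z_1^k$ unless $m\mid k+1$, so that $\det\Phi'\sim z_1^{m-1}$ divides $g$; then second Riemann extension across the codimension-two residual set, and properness plus first Riemann extension to push $f$ across $Z_2$ --- is sound. What it buys is a self-contained proof of the full statement as written; what it costs is the appeal to the local structure theory of regular branched coverings (realization of the local cyclic deck group inside $\Gamma$ and linearization at a generic ramification point), a standard but nontrivial input which, for the monomial maps $\Phi_A$ to which the paper actually applies the proposition, can be verified by hand.
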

\begin{proof}
Let $f$ be a function on $\Omega_2$, and let $g=\Phi^\sharp f$ be its pullback to $\Omega_1$, then we have for each $\sigma \in \Gamma$ that
\[ 
\sigma^\sharp(g)=\sigma^\sharp (\Phi^\sharp f) = (\Phi\circ \sigma)^\sharp f = \Phi^\sharp f =g,  
\]
where we have used the contravariance of the pullback $(\Phi\circ \sigma)^\sharp=\sigma^\sharp \circ\Phi^\sharp$ and the fact that $\Phi\circ \sigma =\Phi$ which follows since the action of $\Gamma$ on $\Omega_2$
restricts to actions on each of the fibers.  This shows that the range of $\Phi^\sharp$ consists of $\Gamma$-invariant functions. Special cases of this invariance were already noticed  \cite{misra,CKY19}.

To complete the proof  of \eqref{eq-phisharp1} we must show that 
\begin{itemize}
\item[(i)]for each $f\in L^p(\Omega_2)$,
\begin{equation}
    \label{eq-phisharphomo}\norm{\Phi^\sharp f}^p_{L^p\left(\Omega_1, \abs{\det \Phi'}^{2-p}\right)}= \abs{\Gamma}\cdot \norm{f}^p_{L^p(\Omega_2)},
\end{equation}
\item[(ii)] The image  $\Phi^\sharp(L^p(\Omega_2))$ is precisely $\left[L^p\left(\Omega_1, \abs{\det \Phi'}^{2-p}\right)\right]^\Gamma$.
\end{itemize}

Let $Z_1, Z_2$ be as in the Definition~\ref{def-quotienttype} of proper holomorphic maps of quotient type, 
i.e., $\Phi$ is a regular covering map from $\Omega_1\setminus Z_1$ to $\Omega_2\setminus Z_2$.
Let $U$ be an open set in $\Omega_2\setminus Z_2$ which is evenly covered 
by $\Phi$, and let $V$ be an open set of $\Omega_1\setminus Z_1$ which is mapped biholomorphically by $\Phi$ onto $U$. Then
the inverse image $\Phi^{-1}(U)$ is the  finite disjoint union $\bigcup_{\sigma \in \Gamma} \sigma V$. Therefore if
$f\in L^p(\Omega_2)$ is supported in $U$, then $\Phi^\sharp f$ is supported in $\bigcup_{\sigma \in \Gamma} \sigma V$, and we have
\begin{align*}
\norm{\Phi^\sharp f}^p_{L^p\left(\Omega_1, \abs{\det \Phi'}^{2-p}\right)}&=\int_{\Omega_1} \abs{f\circ \Phi \cdot \det \Phi'}^{p} \abs{\det \Phi'}^{2-p} dV =\sum_{\sigma \in \Gamma}\int_{\sigma V} \abs{f\circ \Phi}^p \abs{\det \Phi'}^2 dV\\
&= \sum_{\sigma\in \Gamma }\int_{U} \abs{f}^p dV  = \abs{\Gamma}\cdot\norm{f}_{L^p(\Omega_2)}^p,		\end{align*}
where we have used the change of variables formula applied to the biholomorphic map $\Phi|_{\sigma V}$
along with the fact that the real Jacobian determinant of the map $\Phi$ is equal to $\abs{\det\Phi'}^2$.	

For a general $f\in L^p(\Omega_2)$, modify the proof as follows. There is clearly a   collection of pairwise disjoint open sets  $\{U_j\}_{j\in J}$ in $\Omega_2\setminus Z_2$ such that
each $U_j$ is evenly covered by $\Phi$ and $\Omega_2 \setminus \bigcup_{j\in J} U_j$  has measure zero. Set $f_j = f\cdot\chi_j$,  where $\chi_j$ is the 
indicator function of $U_j$, so that 
each $f_j\in L^p(\Omega_2)$ and $f=\sum_{j}f_j$. Also, the functions $\Phi^\sharp f_j$ have pairwise disjoint supports in $\Omega_1$. Therefore
we have 
\[
\norm{\Phi^\sharp f}^p_{L^p\left(\Omega_1, \abs{\det \Phi'}^{2-p}\right)}= \sum_{j\in J} \norm{\Phi^\sharp f_j}^p_{L^p\left(\Omega_1, \abs{\det \Phi'}^{2-p}\right)}
= \abs{\Gamma} \sum_{j\in J} \norm{f_j}_{L^p(\Omega_2)}^p  = \abs{\Gamma}  \norm{f}_{L^p(\Omega_2)}^p.
\]
 
To complete the proof, we need to show that 
$\Phi^\sharp$ is surjective in both \eqref{eq-phisharp1} and \eqref{eq-phisharp2}. Let $g\in \left[L^p\left(\Omega_1, \abs{\det \Phi'}^{2-p}\right)\right]^{\Gamma} $. Let $\{U_j\}_{j\in J}$ be as in 
the previous paragraph, and set $g_j= g\cdot \chi_{\Phi^{-1}(U_j)}$, where $\chi_{\Phi^{-1}(U_j)}$ is the indicator function of $\Phi^{-1}(U_j)$.
Notice that $g_j\in \left[L^p\left(\Omega_1, \abs{\det \Phi'}^{2-p}\right)\right]^{\Gamma}$. Let $V_j\subset \Phi^{-1}(U_j)$ be such that 
$\Phi$ maps $V_j$ biholomorphically to $U_j$, and let $\Psi: U_j\to V_j$ be the local inverse of $\Phi$ onto $V_j$. Define
\begin{equation}
\label{eq-fj}
f_j = \Psi^\sharp (g_j).
\end{equation}

We claim that $f_j$ is defined independently of the choice of $V_j$. Indeed, any other choice is of the form $\sigma V_j$ for some $\sigma\in \Gamma$  and the
corresponding local inverse is $\sigma\circ \Psi$. But we have
\[
(\sigma\circ \Psi)^\sharp g_j = \Psi^\sharp \circ \sigma^\sharp g_j
= \Psi^\sharp g_j  =f_j,
\]
where we have used the fact that $ \sigma^\sharp g_j=g_j$ since
since $g_j\in \left[L^p\left(\Omega_1, \abs{\det \Phi'}^{2-p}\right)\right]^{\Gamma}$.

Now we define $f=\sum_{j}f_j$. Notice that the $f_j$ have pairwise disjoint support, and it is easily checked that $\Phi^\sharp f =g$. This establishes that \eqref{eq-phisharp1} is a homothetic isomorphism.

It is clear that if $f$ is holomorphic on $\Omega_2$, then $\Phi^\sharp f$ is holomorphic on $\Omega_1$, therefore, $\Phi^\sharp$ maps $A^p(\Omega_2)$ 
into $\left[A^p\left(\Omega_1, \abs{\det \Phi'}^{2-p}\right)\right]^{\Gamma}$. Now, in the argument in the previous paragraph showing that the image
$\Phi^\sharp(L^p(\Omega_2))$ is $\left[L^p\left(\Omega_1, \abs{\det \Phi'}^{2-p}\right)\right]^{\Gamma}$, local definition of the inverse map \eqref{eq-fj} shows
that if $g\in \left[A^p\left(\Omega_1, \abs{\det \Phi'}^{2-p}\right)\right]^{\Gamma}$, then the $f$ constructed by this procedure in holomorphic, and therefore 
lies in $A^p(\Omega_2)$. This completes the proof of the proposition.
\end{proof}
\subsection{Bell transformation law for quotient maps} The following is a refinement (for the class of proper holomorphic maps of quotient type)  of a  classic result of Bell   (see \cite[Theorem~1]{bellduke}, \cite[Equation~2.2]{belltransactions}).

\begin{prop}\label{prop-l2}
	Let $\Omega_1,\Omega_2$ be domains in $\cx^n$  and let $\Phi:\Omega_1\to \Omega_2$ be a proper holomorphic map of quotient type with group $\Gamma\subset \mathrm{Aut}(\Omega_1)$. Then the following diagram commutes:
	\begin{equation}\label{diag-l2}	
	\begin{tikzcd}
	L^2(\Omega_2)\arrow[r,"{\Phi}^\sharp"] \arrow[r,swap,"\cong"] \arrow[d, "\bm{B}_{\Omega_2}"] 
	&\left[L^2(\Omega_1)\right]^\Gamma \arrow[d,"\bm{B}_{\Omega_1}"] 
	\\
	{A}^2(\Omega_2)
	\arrow[r,"{\Phi}^\sharp"] \arrow[r,swap,"\cong"]
	& \left[{A}^2(\Omega_1)\right]^\Gamma
	\end{tikzcd}\end{equation}
\end{prop}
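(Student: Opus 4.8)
The plan is to run the standard orthogonal-projection argument for Bell's law, but with the extra care forced by the fact that $\Phi^\sharp$ identifies $A^2(\Omega_2)$ only with the $\Gamma$-invariant part $\left[A^2(\Omega_1)\right]^\Gamma$ of the Bergman space downstairs, whereas $\bm{B}_{\Omega_1}$ projects onto the \emph{full} space $A^2(\Omega_1)$. First I would specialize Proposition~\ref{prop-homothety} to $p=2$: the weight $\abs{\det\Phi'}^{2-p}$ becomes $\equiv 1$, so $\Phi^\sharp$ is a homothetic isomorphism $L^2(\Omega_2)\to\left[L^2(\Omega_1)\right]^\Gamma$ (with constant $\sqrt{\abs{\Gamma}}$) restricting to a homothetic isomorphism $A^2(\Omega_2)\to\left[A^2(\Omega_1)\right]^\Gamma$. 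By the Remark following the definition of homothetic isomorphism, such a map between Hilbert spaces preserves orthogonality, and this is the structural property that will drive everything; it also fixes the two horizontal arrows of the diagram as the claimed isomorphisms.

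Next I would verify that the right vertical arrow is well-defined, i.e. that $\bm{B}_{\Omega_1}$ preserves $\Gamma$-invariance. Each $\sigma\in\Gamma$ is an automorphism of $\Omega_1$, so $\sigma^\sharp$ is a unitary operator on $L^2(\Omega_1)$ mapping $A^2(\Omega_1)$ onto itself; hence $\sigma^\sharp$ commutes with the orthogonal projection $\bm{B}_{\Omega_1}$. Consequently, if $g\in\left[L^2(\Omega_1)\right]^\Gamma$ then $\sigma^\sharp\bm{B}_{\Omega_1}g=\bm{B}_{\Omega_1}\sigma^\sharp g=\bm{B}_{\Omega_1}g$, so $\bm{B}_{\Omega_1}$ carries $\left[L^2(\Omega_1)\right]^\Gamma$ into $\left[A^2(\Omega_1)\right]^\Gamma$.

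The key step, and the spot where I expect the main obstacle, is to bridge the gap between $\left[A^2(\Omega_1)\right]^\Gamma$ and $A^2(\Omega_1)$: I claim that any $h\in\left[L^2(\Omega_1)\right]^\Gamma$ orthogonal to $\left[A^2(\Omega_1)\right]^\Gamma$ is automatically orthogonal to all of $A^2(\Omega_1)$. To see this, introduce the averaging operator $\mathcal{A}=\frac{1}{\abs{\Gamma}}\sum_{\sigma\in\Gamma}\sigma^\sharp$, which is the orthogonal projection of $L^2(\Omega_1)$ onto $\left[L^2(\Omega_1)\right]^\Gamma$: it is idempotent, and self-adjoint because $(\sigma^\sharp)^\ast=(\sigma^{-1})^\sharp$ together with a reindexing of the sum gives $\mathcal{A}^\ast=\mathcal{A}$; moreover it maps $A^2(\Omega_1)$ onto $\left[A^2(\Omega_1)\right]^\Gamma$. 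Then for any $u\in A^2(\Omega_1)$, using $\mathcal{A}h=h$ and self-adjointness,
\[
\ipr{h,u}=\ipr{\mathcal{A}h,u}=\ipr{h,\mathcal{A}u},
\]
and since $\mathcal{A}u\in\left[A^2(\Omega_1)\right]^\Gamma$ the right-hand side vanishes, proving $h\perp A^2(\Omega_1)$.

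Finally I would conclude by uniqueness of orthogonal decompositions. Given $f\in L^2(\Omega_2)$, write $f=\bm{B}_{\Omega_2}f+f^\perp$ with $f^\perp\perp A^2(\Omega_2)$, and apply $\Phi^\sharp$. The first summand lands in $\left[A^2(\Omega_1)\right]^\Gamma\subset A^2(\Omega_1)$. Because $\Phi^\sharp$ preserves orthogonality and sends $A^2(\Omega_2)$ onto $\left[A^2(\Omega_1)\right]^\Gamma$, the image $\Phi^\sharp f^\perp$ lies in $\left[L^2(\Omega_1)\right]^\Gamma$ and is orthogonal to $\left[A^2(\Omega_1)\right]^\Gamma$; by the key step it is then orthogonal to all of $A^2(\Omega_1)$. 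Thus $\Phi^\sharp f=\Phi^\sharp\bm{B}_{\Omega_2}f+\Phi^\sharp f^\perp$ is exactly the orthogonal decomposition of $\Phi^\sharp f$ along $A^2(\Omega_1)\oplus A^2(\Omega_1)^\perp$, so $\bm{B}_{\Omega_1}(\Phi^\sharp f)=\Phi^\sharp(\bm{B}_{\Omega_2}f)$, which is precisely the commutativity of the diagram.
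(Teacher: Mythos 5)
Your proof is correct, and although its outer skeleton (Proposition~\ref{prop-homothety} at $p=2$, orthogonality preservation by the homothetic isomorphism $\Phi^\sharp$, and uniqueness of orthogonal decompositions) matches the paper's, the crucial bridging step is handled by a genuinely different device. The paper packages that bridge into Lemma~\ref{lem-inv}(2): after showing $\bm{B}_{\Omega_1}$ maps $[L^2(\Omega_1)]^\Gamma$ into $[A^2(\Omega_1)]^\Gamma$, it observes $\ker\bigl(\bm{B}_{\Omega_1}|_{[L^2(\Omega_1)]^\Gamma}\bigr)\subseteq \ker\bm{B}_{\Omega_1}=A^2(\Omega_1)^\perp\subseteq \bigl([A^2(\Omega_1)]^\Gamma\bigr)^\perp$, so the restricted idempotent has kernel orthogonal to its range and is therefore the orthogonal projection onto $[A^2(\Omega_1)]^\Gamma$; it then conjugates $\bm{B}_{\Omega_2}$ by $\Phi^\sharp$ to an operator $P$ on $[L^2(\Omega_1)]^\Gamma$ and identifies $P$ with this restriction by uniqueness of orthogonal projections. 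You instead prove the needed inclusion $\bigl([A^2(\Omega_1)]^\Gamma\bigr)^\perp\cap[L^2(\Omega_1)]^\Gamma\subseteq A^2(\Omega_1)^\perp$ directly, via the averaging operator $\mathcal{A}=\tfrac{1}{\abs{\Gamma}}\sum_{\sigma\in\Gamma}\sigma^\sharp$, using its self-adjointness (which you correctly reduce to $(\sigma^\sharp)^\ast=(\sigma^{-1})^\sharp$ and reindexing) and the fact that $\mathcal{A}$ compresses $A^2(\Omega_1)$ into $[A^2(\Omega_1)]^\Gamma$; you also track the orthogonal decomposition of a single $f$ rather than conjugating operators. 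Both mechanisms are sound, and your step identifying the right vertical arrow is exactly Lemma~\ref{lem-inv}(1) with a slightly different (equally valid) justification. What your route buys: the averaging projection makes completely transparent \emph{why} an invariant function orthogonal to the invariant Bergman functions is orthogonal to the whole Bergman space, a point the paper reaches more indirectly through the kernel containment. What the paper's route buys: Lemma~\ref{lem-inv} is formulated without any inner-product averaging and without assuming $\Gamma$ finite, and is reused as a standalone tool in the $L^p$ discussion surrounding Theorem~\ref{thm-transformation}, where no Hilbert-space structure is available; by contrast your operator $\mathcal{A}$ requires $\abs{\Gamma}<\infty$ --- automatic here, since properness forces finite fibers and the paper notes after Definition~\ref{def-quotienttype} that the covering has exactly $\abs{\Gamma}$ sheets, but worth flagging as an explicit hypothesis your argument consumes.
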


In order to prove the proposition, we need the following simple lemma, which shows that the Bergman projection interacts well with the action of automorphisms:
\begin{lem}
	\label{lem-inv}
	Let $\Omega\subset \cx^n$ be a domain, and let $\bm{B}_\Omega$ be its Bergman projection operator.
	\begin{enumerate}
		\item If $\sigma\in \mathrm{Aut}(\Omega)$ is a biholomorphic automorphism, then 
		\begin{equation}\label{eq-bomega1}
		\bm{B}_{\Omega}\circ \sigma^\sharp = \sigma^\sharp \circ \bm{B}_{\Omega}.
		\end{equation}
		\item If $G\subset \Aut(\Omega)$ is a group of biholomorphic automorphisms, then
		$\bm{B}_{\Omega}$ restricts to the orthogonal projection operator from $[L^2(\Omega)]^G$ 
		onto $[A^2(\Omega)]^G$.
	\end{enumerate}
\end{lem}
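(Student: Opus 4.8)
The plan is to run both parts through standard Hilbert space theory, using as the one structural input the fact (recorded in the introduction) that for a biholomorphic automorphism $\sigma \in \mathrm{Aut}(\Omega)$ the pullback $\sigma^\sharp$, defined by $\sigma^\sharp(f) = f\circ\sigma\cdot\det\sigma'$, is an \emph{isometric isomorphism} of $L^2(\Omega)$ onto itself which restricts to an isometric isomorphism of the closed subspace $A^2(\Omega)$ onto itself. In other words $\sigma^\sharp$ is a unitary operator on $L^2(\Omega)$ carrying $A^2(\Omega)$ onto $A^2(\Omega)$.

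For part (1), the first step is to observe that because $\sigma^\sharp$ is unitary and maps $A^2(\Omega)$ onto itself, it also maps the orthogonal complement $A^2(\Omega)^\perp$ onto itself: a unitary preserves orthogonality, so $\sigma^\sharp\bigl(A^2(\Omega)^\perp\bigr)\subseteq A^2(\Omega)^\perp$, and the same applied to $\sigma^{-1}$ (whose pullback is $(\sigma^\sharp)^{-1}$) gives equality. Then I would invoke the general principle that a unitary preserving a closed subspace commutes with the orthogonal projection onto it: for $f\in L^2(\Omega)$ write the orthogonal decomposition $f = \bm{B}_\Omega f + (f-\bm{B}_\Omega f)$ with $\bm{B}_\Omega f\in A^2(\Omega)$ and $f-\bm{B}_\Omega f\in A^2(\Omega)^\perp$; applying $\sigma^\sharp$ and using that it preserves both summand spaces, the uniqueness of the orthogonal decomposition of $\sigma^\sharp f$ forces $\bm{B}_\Omega(\sigma^\sharp f)=\sigma^\sharp(\bm{B}_\Omega f)$, which is exactly \eqref{eq-bomega1}.

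For part (2), I would first note that $[L^2(\Omega)]^G$ and $[A^2(\Omega)]^G$ are closed subspaces, being intersections over $\sigma\in G$ of the kernels of the continuous operators $\mathrm{id}-\sigma^\sharp$, and that $[A^2(\Omega)]^G = [L^2(\Omega)]^G\cap A^2(\Omega)$. Next, using part (1), I would check that $\bm{B}_\Omega$ maps $[L^2(\Omega)]^G$ into $[A^2(\Omega)]^G$: if $\sigma^\sharp f = f$ for all $\sigma\in G$, then $\sigma^\sharp(\bm{B}_\Omega f)=\bm{B}_\Omega(\sigma^\sharp f)=\bm{B}_\Omega f$, so $\bm{B}_\Omega f$ is $G$-invariant and lies in $A^2(\Omega)$, hence in $[A^2(\Omega)]^G$. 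Finally, viewing $[L^2(\Omega)]^G$ as a Hilbert space and $[A^2(\Omega)]^G$ as a closed subspace of it, I would identify the restriction of $\bm{B}_\Omega$ with the orthogonal projection onto $[A^2(\Omega)]^G$ by the characterization of orthogonal projections: for $f\in[L^2(\Omega)]^G$ we have $\bm{B}_\Omega f\in[A^2(\Omega)]^G$ and $f-\bm{B}_\Omega f\in[L^2(\Omega)]^G$, while $f-\bm{B}_\Omega f\perp A^2(\Omega)\supseteq[A^2(\Omega)]^G$; these two facts together say precisely that $\bm{B}_\Omega f$ is the nearest point of $[A^2(\Omega)]^G$ to $f$.

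There is no genuine obstacle here; the entire argument is routine once the unitarity of $\sigma^\sharp$ is in hand. The only point deserving a moment of care is the last step of part (2): one must record that the orthogonality $f-\bm{B}_\Omega f\perp[A^2(\Omega)]^G$ is \emph{inherited for free} from the stronger orthogonality to all of $A^2(\Omega)$, so that no separate computation of a projection within the invariant subspace is needed. This is what makes the restriction of $\bm{B}_\Omega$ automatically agree with the intrinsic Bergman projection of the invariant subspace, and it is the fact that will be used in the proof of Proposition~\ref{prop-l2}.
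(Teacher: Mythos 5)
Your proposal is correct and takes essentially the same approach as the paper: both parts rest on the unitarity of $\sigma^\sharp$ and its preservation of $A^2(\Omega)$, and your closing observation in part (2) --- that orthogonality of $f-\bm{B}_\Omega f$ to all of $A^2(\Omega)$ is inherited for free by $[A^2(\Omega)]^G$ --- is precisely the paper's step $\ker\left(\bm{B}_{\Omega}|_{[L^2(\Omega)]^G}\right)\subseteq A^2(\Omega)^\perp \subseteq \left([A^2(\Omega)]^G\right)^\perp$. The only cosmetic difference is in part (1), where you invoke uniqueness of the orthogonal decomposition while the paper conjugates, noting that $Q=\sigma^\sharp \circ \bm{B}_{\Omega}\circ (\sigma^\sharp)^{-1}$ is an orthogonal projection with range $A^2(\Omega)$ and hence equals $\bm{B}_\Omega$; these are the same idea.
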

\begin{proof}
    For (1), note that  $\sigma^\sharp$ is a unitary operator  on $L^2(\Omega)$ and $\bm{B}_{\Omega}$ is an orthogonal projection on $L^2(\Omega)$, therefore the unitarily similar operator $Q=\sigma^\sharp \circ \bm{B}_{\Omega}\circ (\sigma^\sharp)^{-1}$ is also an orthogonal projection.  Since $ \sigma^\sharp$ (and therefore its inverse)
    leaves $A^2(\Omega)$ invariant, it follows that the range of $Q$ is  $A^2(\Omega)$. Therefore $Q=\bm{B}_{\Omega}$.
	
    For (2), let $f\in [L^2(\Omega)]^G$. Then using \eqref{eq-bomega1}, we have for     $\sigma\in G$
	\[ \sigma^\sharp (\bm{B}_{\Omega}f) = \bm{B}_{\Omega}(\sigma^\sharp (f)) = \bm{B}_{\Omega}f,\]
	which shows that $\bm{B}_{\Omega}f\in [A^2(\Omega)]^G$, so that $\bm{B}_{\Omega}$ maps the $G$-invariant functions $[L^2(\Omega)]^G$ into the $G$-invariant holomorphic functions $[A^2(\Omega)]^G$. Since $\bm{B}_\Omega$ restricts to the identity on $[A^2(\Omega)]^G$, 
	it follows that the range of $\bm{B}_{\Omega}$ is $[A^2(\Omega)]^G$.  Observe that
	\[ \ker\left(\bm{B}_{\Omega}|_{[L^2(\Omega)]^G}\right)\subseteq \ker \bm{B}_{\Omega}= A^2(\Omega)^\perp \subseteq \left([A^2(\Omega)]^G\right)^\perp, \]
	which shows that kernel of the  restriction of $\bm{B}_{\Omega}$ to $[L^2(\Omega)]^G$ is  orthogonal to its range, and therefore an orthogonal projection.
	\end{proof}

\begin{proof}[Proof of Proposition~\ref{prop-l2}]
By Proposition~\ref{prop-homothety},  the $\Phi^\sharp$ represented by the top (resp. bottom) horizontal arrow
is a homothetic isomorphism from  the Hilbert space  $	{L}^2(\Omega_2)$ 
(resp. $	{A}^2(\Omega_2)$) onto the Hilbert space $[{L}^2(\Omega_1)]^\Gamma $ (resp. $[{A}^2(\Omega_1)]^\Gamma $). Therefore, $\Phi^\sharp$ preserves angles and in particular  orthogonality. Now consider the map $ P: [{L}^2(\Omega_1)]^\Gamma  \to[{A}^2(\Omega_1)]^\Gamma  $
defined by
\begin{equation}\label{eq-Pdef}		 
P= \Phi^\sharp \circ \bm{B}_{\Omega_2}\circ (\Phi^\sharp)^{-1},
\end{equation}
which, being a composition of continuous linear maps, is a continuous linear mapping of Hilbert spaces. Notice that
\[
P^2 = \Phi^\sharp \circ \bm{B}_{\Omega_2}\circ (\Phi^\sharp)^{-1}\circ  \Phi^\sharp \circ \bm{B}_{\Omega_2}\circ (\Phi^\sharp)^{-1}= \Phi^\sharp \circ \bm{B}_{\Omega_2}\circ (\Phi^\sharp)^{-1}=P, 
\]
so $P$ is a projection in $[{L}^2(\Omega_1)]^\Gamma$, with range contained in  $[{A}^2(\Omega_1)]^\Gamma$. 	Since $(\Phi^\sharp)^{-1}$ and $\Phi^\sharp|_{A^2(\Omega)}$ are isomorphisms, and $\bm{B}_{\Omega_2}$ is surjective, it follows that $P$ is a projection onto $[{A}^2(\Omega_1)]^\Gamma$.  We claim that $P$ is in fact the \emph{orthogonal} projection on to $[{A}^2(\Omega_1)]^\Gamma$,
i.e., the kernel of $P$ is $\left([{A}^2(\Omega_1)]^\Gamma\right)^\perp$, the orthogonal complement of $[{A}^2(\Omega_1)]^\Gamma$ in $[{L}^2(\Omega_1)]^\Gamma$. Since  in formula \eqref{eq-Pdef}, the maps $(\Phi^\sharp)^{-1}$   and $\Phi^\sharp$ are isomorphisms, it follows that $f\in \ker P$ if and only if $(\Phi^\sharp)^{-1}f\in \ker \bm{B}_{\Omega_2}$. But $\ker \bm{B}_{\Omega_2}= A^2(\Omega_2)^\perp$, since the Bergman projection is orthogonal. It follows that $\ker P = \Phi^\sharp(A^2(\Omega_2)^\perp)$. Notice that $\Phi^\sharp$, being a homothetic isomorphism of Hilbert spaces, preserves orthogonality,
and maps $A^2(\Omega_2)$ to $[{A}^2(\Omega_1)]^\Gamma$ isomorphically, therefore $\Phi^\sharp( (A^2(\Omega_2))^\perp)= \left([{A}^2(\Omega_1)]^\Gamma\right)^\perp$, which establishes the claim.

Therefore we have shown that $P=\Phi^\sharp \circ \bm{B}_{\Omega_2}\circ (\Phi^\sharp)^{-1}$ is the orthogonal projection from $ [{L}^2(\Omega_1)]^\Gamma$
to the subspace   $[{A}^2(\Omega_1)]^\Gamma$.  To complete the proof, we only need to show that the restriction of the Bergman projection
$\bm{B}_{\Omega_1}$ to the $\Gamma$-invariant subspace  $[{L}^2(\Omega_1)]^\Gamma$ is also the orthogonal projection from $[{L}^2(\Omega_1)]^\Gamma$ 
onto 
$[{A}^2(\Omega_1)]^\Gamma$.  But this follows from Lemma~\ref{lem-inv} above.

Thus $P=\bm{B}_{\Omega_1}|_{[L^2(\Omega_1)]^\Gamma}$, and the commutativity of \eqref{diag-l2} follows.
\end{proof}

\subsection{Transformation of the Bergman projection in $L^p$-spaces} The following result will be our main tool on studying $L^p$-regularity of the Bergman projection in the domain $\Uu$:

\begin{thm}\label{thm-transformation}	
Let $\Omega_1,\Omega_2$ be bounded domains in $\cx^n$, let $\Phi:\Omega_1\to \Omega_2$ be a proper holomorphic map of quotient type with group $\Gamma\subset \mathrm{Aut}(\Omega_1)$.  Let $p\geq 1$. The  following two assertions are equivalent:
\begin{enumerate}
\item  \label{statement-a} The Bergman projection $\bm{B}_{\Omega_2}$ gives rise to a bounded operator mapping $$L^p(\Omega_2) \to A^p(\Omega_2).$$
\item  \label{statement-b} The Bergman projection $\bm{B}_{\Omega_1}$ gives rise to a bounded operator  mapping $$\left[L^p\left(\Omega_1, \abs{\det \Phi'}^{2-p}\right)\right]^{\Gamma} \to \left[A^p\left(\Omega_1, \abs{\det \Phi'}^{2-p}\right)\right]^{\Gamma}.$$		
\end{enumerate}

If one of the conditions \eqref{statement-a} or \eqref{statement-b} holds (and therefore both hold), then the following diagram commutes, where $\bm{B}_{\Omega_j}, j=1,2$ denote the extension by 
	continuity of the Bergman projections:
	\begin{equation} 
	\label{diag-bounded} 	
	\begin{tikzcd}			
	{L}^p(\Omega_2)
	\arrow[r,"{\Phi}^\sharp"] \arrow[r,swap,"\cong"] \arrow[d, "\bm{B}_{\Omega_2}"] 
	& \left[L^p\left(\Omega_1, \abs{\det \Phi'}^{2-p}\right)\right]^{\Gamma} \arrow[d,"\bm{B}_{\Omega_1}"] 
	\\
	{A}^p(\Omega_2)
	\arrow[r,"{\Phi}^\sharp"] \arrow[r,swap,"\cong"]
	& \left[A^p\left(\Omega_1, \abs{\det \Phi'}^{2-p}\right)\right]^{\Gamma}
	\end{tikzcd}
	\end{equation}
\end{thm}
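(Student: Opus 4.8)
The plan is to reduce everything to three facts already established: Proposition~\ref{prop-homothety} (that $\Phi^\sharp$ is a homothetic isomorphism at the $L^p$ level and restricts to one at the $A^p$ level), Proposition~\ref{prop-l2} (the $L^2$ version of the diagram), and a density argument. Since $\Omega_2$ is bounded, the space $\mathcal{D}=L^2(\Omega_2)\cap L^p(\Omega_2)$ contains the bounded measurable functions and is therefore dense in $L^p(\Omega_2)$; on $\mathcal{D}$ the operator $\bm{B}_{\Omega_2}$ is literally the $L^2$-orthogonal projection. By the $p=2$ case of Proposition~\ref{prop-homothety} (trivial weight), $\Phi^\sharp$ carries $\mathcal{D}$ into $[L^2(\Omega_1)]^\Gamma\cap\left[L^p\left(\Omega_1,\abs{\det\Phi'}^{2-p}\right)\right]^\Gamma$, and because $\Phi^\sharp$ is a homeomorphism onto $\left[L^p\left(\Omega_1,\abs{\det\Phi'}^{2-p}\right)\right]^\Gamma$, the image $\Phi^\sharp\mathcal{D}$ is dense there.

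Next I would transport the $L^2$ identity of Proposition~\ref{prop-l2} to $\mathcal{D}$: for every $f\in\mathcal{D}$,
\[
\Phi^\sharp(\bm{B}_{\Omega_2}f)=\bm{B}_{\Omega_1}(\Phi^\sharp f),
\]
the right-hand $\bm{B}_{\Omega_1}$ acting on the $\Gamma$-invariant $L^2$-function $\Phi^\sharp f$. Writing $C=\abs{\Gamma}^{1/p}$ for the common homothety constant of \eqref{eq-phisharphomo}, applying \eqref{eq-phisharphomo} to $f$ and (via \eqref{eq-phisharp2}, with the same constant) to the holomorphic function $\bm{B}_{\Omega_2}f$ yields
\[
\norm{\bm{B}_{\Omega_1}(\Phi^\sharp f)}_{L^p\left(\Omega_1,\abs{\det\Phi'}^{2-p}\right)}=C\,\norm{\bm{B}_{\Omega_2}f}_{L^p(\Omega_2)},\qquad \norm{\Phi^\sharp f}_{L^p\left(\Omega_1,\abs{\det\Phi'}^{2-p}\right)}=C\,\norm{f}_{L^p(\Omega_2)}.
\]
Dividing, the restricted operators $\bm{B}_{\Omega_2}\big|_{\mathcal{D}}$ (measured in $L^p(\Omega_2)$) and $\bm{B}_{\Omega_1}\big|_{\Phi^\sharp\mathcal{D}}$ (measured in the weighted norm) have identical operator norm, finite or infinite together. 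Since $\mathcal{D}$ and $\Phi^\sharp\mathcal{D}$ are dense in the respective spaces, this common norm governs the norm of the full (extended) projection on either side, so \eqref{statement-a} holds if and only if \eqref{statement-b} does.

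Finally, assuming one (hence both) of the boundedness statements holds, I would obtain the commuting diagram \eqref{diag-bounded} by continuity. All four maps are continuous: the horizontal $\Phi^\sharp$'s by Proposition~\ref{prop-homothety}, and the vertical projections by the assumed boundedness together with the density-extension just described. The identity $\Phi^\sharp\circ\bm{B}_{\Omega_2}=\bm{B}_{\Omega_1}\circ\Phi^\sharp$ holds on the dense set $\mathcal{D}$, and two continuous maps agreeing on a dense subset agree everywhere; hence \eqref{diag-bounded} commutes on all of $L^p(\Omega_2)$.

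I expect the main obstacle to be bookkeeping rather than conceptual: one must keep the opposite inclusions $L^p\subset L^2$ (for $p\ge2$) and $L^2\subset L^p$ (for $p\le2$) straight, so that $\mathcal{D}$ and the ``extension by continuity'' of $\bm{B}_{\Omega_2}$ to $L^p$ are interpreted correctly, and one must verify that conjugation by the \emph{homothetic} (not isometric) isomorphism $\Phi^\sharp$ preserves operator norms exactly -- which it does precisely because the constant $C$ is the same on source and target and therefore cancels.
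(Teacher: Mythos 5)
Your proposal is correct and follows essentially the same route as the paper's proof: both conjugate the Bergman projection by the homothetic isomorphism $\Phi^\sharp$ of Proposition~\ref{prop-homothety}, invoke the $L^2$ intertwining identity of Proposition~\ref{prop-l2} on the dense subspace $L^2(\Omega_2)\cap L^p(\Omega_2)$, and extend by continuity. Your explicit bookkeeping of the common homothety constant $\abs{\Gamma}^{1/p}$ merely sharpens the paper's composition argument $\bm{B}_{\Omega_2}=(\Phi^\sharp)^{-1}\circ\bm{B}_{\Omega_1}\circ\Phi^\sharp$ to an exact equality of operator norms; it is not a different method.
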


\begin{rem}
Statement \eqref{statement-a} in Theorem~\ref{thm-transformation} means  the following:  the restriction of the Bergman projection to a dense subspace of $L^p(\Omega_2)$ given by
\begin{equation*}
\bm{B}_{\Omega_2}:L^2(\Omega_2)\cap L^p(\Omega_2) \to A^2(\Omega_2)
\end{equation*}
is bounded in the $L^p$-norm, i.e., there is a $C>0$ such that for all $f\in L^2(\Omega_2)\cap L^p(\Omega_2)$, 
\begin{equation*}
\norm{\bm{B}_{\Omega_2} f}_{L^p(\Omega_2)} \leq C \norm{f}_{L^p(\Omega_2)}.
\end{equation*} 
By continuity $\bm{B}_{\Omega_2}$ extends to a bounded linear operator from $L^p(\Omega_2)$ to $A^p(\Omega_2)$.
\smallskip
	
Similarly, statement \eqref{statement-b} means the following:  the restriction of the Bergman projection to the dense subspace of $\left[L^p\left(\Omega_1, \abs{\det \Phi'}^{2-p}\right)\right]^{\Gamma}$ given by
\begin{equation*} 
\bm{B}_{\Omega_1}: [L^2(\Omega_1)]^\Gamma \cap \left[L^p\left(\Omega_1, \abs{\det \Phi'}^{2-p}\right)\right]^{\Gamma} \to A^2(\Omega_1)
\end{equation*}
is bounded in the $L^p\left(\Omega_1, \abs{\det \Phi'}^{2-p}\right)$-norm, i.e., there is a $C>0$ such that for all $f\in [L^2(\Omega_1)]^\Gamma \cap \left[L^p\left(\Omega_1, \abs{\det \Phi'}^{2-p}\right)\right]^{\Gamma}$,
\begin{equation*}
\norm{\bm{B}_{\Omega_1}f}_{L^p\left(\Omega_1, \abs{\det \Phi'}^{2-p}\right)}\leq C \norm{f}_{L^p\left(\Omega_1, \abs{\det \Phi'}^{2-p}\right)}.
\end{equation*}
We now see by Lemma~\ref{lem-inv} that 
\begin{equation*} 
\bm{B}_{\Omega_1} \left(\left[L^p\left(\Omega_1, \abs{\det \Phi'}^{2-p}\right)\right]^{\Gamma}\right) \subseteq \left[A^p\left(\Omega_1, \abs{\det \Phi'}^{2-p}\right)\right]^{\Gamma},
\end{equation*} 
where we have used continuity to extend the operator.  
\end{rem}

\begin{proof}
Proposition~\ref{prop-homothety} says that $\Phi^\sharp$ is a homothetic isomorphism, mapping $$L^p(\Omega_2) \to \left[L^p(\Omega_1, \abs{\det \Phi'}^{2-p})\right]^\Gamma,$$ and that it restricts to a homothetic isomorphism on the holomorphic subspaces.  Similarly, $(\Phi^\sharp)^{-1}$ has the same properties with the domains and ranges switched.

First assume statement (2). From the diagram \eqref{diag-l2}, we write
\begin{equation}\label{eq-star} 
\bm{B}_{\Omega_2}=  (\Phi^\sharp)^{-1}\circ \bm{B}_{\Omega_1}\circ\Phi^\sharp.
\end{equation}

By hypothesis, $\bm{B}_{\Omega_1}$ is a bounded linear operator mapping $$\left[L^p(\Omega_1, \abs{\det \Phi'}^{2-p})\right]^\Gamma \to \left[A^p(\Omega_1, \abs{\det \Phi'}^{2-p})\right]^\Gamma.$$  Consequently, this composition maps $L^p(\Omega_2)$ boundedly into $A^p(\Omega_2)$, giving statement (1).  A similar argument shows that (1) implies (2).

For the commutativity of the diagram, rewrite \eqref{eq-star} and see that on the subspace $L^p(\Omega_2)\cap L^2(\Omega_2)$ of $L^p(\Omega_2)$ we have the relation 
\begin{equation}\label{eq-doublestar}
	\Phi^\sharp\circ\bm{B}_{\Omega_2}=   \bm{B}_{\Omega_1}\circ\Phi^\sharp.
\end{equation}
Using the hypothesis (for the $\bm{B}_{\Omega_j}$) and Proposition~\ref{prop-homothety} (for $\Phi^\sharp$) we see that each of the four maps
in the diagram \eqref{diag-bounded} extends to the  respective domain in that diagram and is continuous. By continuity, \eqref{eq-doublestar} continues to hold for the extended maps. This shows that the diagram \eqref{diag-bounded} is commutative.
\end{proof}

\begin{rem}
Diagram~\eqref{diag-l2} is a special case of diagram~\eqref{diag-bounded} for $p=2$. 
\end{rem}


\section{The unboundedness of the Bergman projection on \texorpdfstring{$\Uu$}{U}}\label{sec-unbounded}
Using Proposition~\ref{prop-unboundedness} below and the results of Section
\ref{sec-transformation}, we will prove in this section the following partial form of Theorem~\ref{thm-main}:
\begin{prop}
\label{prop-unboundedness}
The Bergman projection is not bounded in $L^p(\Uu)$, provided
\begin{equation}
    \label{eq-p}p \geq \frac{2\kappa(\Uu)}{\kappa(\Uu)-1},
\end{equation}
where $\kappa(\Uu)\in \N$ is the complexity of the domain $\Uu$, as defined in subsection~\ref{sec-mainresult}.
\end{prop}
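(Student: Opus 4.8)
The strategy is to use the transformation law of Theorem~\ref{thm-transformation} to reduce the unboundedness assertion on $\Uu$ to an unboundedness assertion on the model domain $\D^n_\slb$, where we have the explicit Bergman kernel \eqref{eq-bpdsk} of the polydisc at our disposal. By Theorem~\ref{thm-transformation} (with $\Omega_1=\D^n_\slb$, $\Omega_2=\Uu$, and $\Phi=\Phi_A$), the Bergman projection $\bm{B}_\Uu$ fails to be bounded on $L^p(\Uu)$ exactly when $\bm{B}_{\D^n_\slb}$ fails to be bounded on the weighted $\Gamma$-invariant space $[L^p(\D^n_\slb,\abs{\det\Phi_A'}^{2-p})]^\Gamma$. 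Using Lemma~\ref{lem-PhiA'} we compute $\abs{\det\Phi_A'(z)}=\abs{\det A}\cdot\abs{z}^{\one A-\one}$ (as a weight on $\rho(z)$), so the weight is a monomial $\rho(z)^{(2-p)(\one A-\one)}$ up to a positive constant. Since $\D^n_\slb$ differs from the polydisc $\D^n$ only by an analytic set of measure zero, $L^p(\D^n_\slb,\lambda)=L^p(\D^n,\lambda)$, and the problem becomes one of unboundedness of the weighted Bergman projection on the polydisc with a monomial weight, restricted to a finite-group-invariant subspace.

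\textbf{Key steps.}
First I would make the weight explicit: writing $A=\adj B\succeq 0$, the exponent vector $(2-p)(\one A-\one)$ determines a monomial weight on the polydisc. Second, I would produce an explicit family of test functions that witnesses the unboundedness. On the polydisc the natural candidates are monomials $z^\alpha$ (or suitable truncations/products of the reproducing kernel of the polydisc), for which all integrals factor coordinatewise into beta-type integrals. The point at which unboundedness first occurs should correspond to the coordinate $k$ achieving the maximal projective height $\mathsf{h}(B^{-1}e_k)=\kappa(\Uu)$; this is where the borderline exponent in \eqref{eq-p} comes from. Third, I would feed these test functions through the projection and estimate both $\norm{\bm{B}f}$ and $\norm{f}$ in the weighted norm, arranging a sequence along which the ratio diverges precisely when $p\geq \tfrac{2\kappa(\Uu)}{\kappa(\Uu)-1}$, i.e. when $(p-2)(\kappa(\Uu)-1)\geq 2$ so that the relevant $\Gamma$-invariant monomial lies just outside the range where the weighted integral converges after projection. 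Care must be taken that the test functions are genuinely $\Gamma$-invariant (lie in the image of $\Phi_A^\sharp$); by Proposition~\ref{prop-homothety} it suffices to build them as pullbacks of honest monomials on $\Uu$, which keeps them automatically in $[\,\cdot\,]^\Gamma$ and lets all computations be done downstairs on $\D^n$.

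\textbf{Main obstacle.}
The principal difficulty is identifying the correct borderline exponent and the correct single coordinate direction in which to push the test functions, and then showing that the divergence is sharp at the endpoint $p=\tfrac{2\kappa(\Uu)}{\kappa(\Uu)-1}$ rather than only strictly beyond it. Concretely, the height function $\mathsf{h}$ enters through the denominators appearing when one writes the $\Gamma$-invariant monomials on $\D^n_\slb$ in terms of monomials pulled back from $\Uu$ via $\Phi_A$: the integer vector defining $B^{-1}e_k$ (with $\gcd=1$) governs which powers of $z$ descend to honest functions on $\Uu$, and its $\ell^1$-norm $\kappa(\Uu)$ controls the exact threshold for $L^p$-integrability of the projected kernel. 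The computation must track this arithmetic carefully, since it is exactly this arithmetic—and not the metric shape of $\Uu$—that produces the complexity-dependent range \eqref{eq-bounds}. I expect the endpoint behavior (closed inequality $p\ge\tfrac{2\kappa}{\kappa-1}$) to require a borderline divergence argument, e.g.\ a logarithmically divergent integral at $p=\tfrac{2\kappa}{\kappa-1}$ rather than a simple power divergence.
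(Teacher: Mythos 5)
Your plan points in a sensible general direction---the quotient map $\Phi_A$ and coordinatewise beta-type integrals on the polydisc are indeed how the paper computes things, and your instinct that the endpoint failure is a logarithmic divergence turns out to be correct---but the proposal has two genuine gaps. First, the mechanism for deducing unboundedness is missing. The paper does \emph{not} route the unboundedness half through Theorem~\ref{thm-transformation} at all: it works directly on $\Uu$, exploiting the Reinhardt structure. The engine is Lemma~\ref{lem-unboundedness}: if some monomial $\varphi_\beta$ lies in $L^2(\Uu)\setminus L^p(\Uu)$, then the single \emph{bounded} test function $f(w)=\prod_{\beta_j\geq 0}w_j^{\beta_j}\cdot\prod_{\beta_k<0}\ol{w_k}^{-\beta_k}$ satisfies $\bm{B}_\Uu f=C\varphi_\beta$ with $C\neq 0$, computed exactly via the orthogonality of monomials in $A^2$ of a Reinhardt domain---no sequence of test functions and no ratio estimate are needed. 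Your proposed scheme of estimating $\norm{\bm{B}f}/\norm{f}$ for $\Gamma$-invariant test functions upstairs in $\left[L^p\left(\D^n_\slb,\abs{\det\Phi_A'}^{2-p}\right)\right]^\Gamma$ is harder than you acknowledge: $\bm{B}_{\D^n_\slb}$ is the \emph{unweighted} projection acting on a weighted space, so the explicit kernel \eqref{eq-bpdsk} does not directly control its norm there. In the paper, $\Phi_A$ enters the unboundedness argument only passively, in Proposition~\ref{prop-pallow}, to convert $\int_\Uu\abs{\varphi_\beta}^p\,dV$ into a polydisc integral and obtain the clean criterion $(p\cdot\beta+2\cdot\one)A\succ 0$.

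Second---and this is the arithmetic heart your proposal leaves unaddressed---you must actually \emph{produce} the critical multi-index. It is not enough to observe that the maximal-height column governs the threshold: one needs $\beta\in(\Z^n)^\dagger$ satisfying simultaneously the borderline equality $(\beta+\one)a_J=\gcd(a_J)$ in the column $J$ realizing the minimum in \eqref{eq-upperbound}, and the $L^2$ conditions $(\beta+\one)a_j\geq\gcd(a_j)$ in all other columns. The equality alone is the Euclidean algorithm, but making it compatible with the remaining $n-1$ inequalities is a nontrivial lattice-point problem; the paper handles it in part (2) of Proposition~\ref{prop-2allow} by parametrizing the hyperplane $\Pi_J$ via a $\Z$-basis of $\ker(x\mapsto xa_J)$ and invoking Lemma~\ref{lem-part2}, which supplies an integer point in the unbounded convex set $\{x: xP\succeq q\}$. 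Without this existence step the argument does not get off the ground even granting the rest of your plan. With it, the endpoint is immediate rather than delicate: at $p=\frac{2\kappa(\Uu)}{\kappa(\Uu)-1}$ one computes $(p\cdot\beta+2\cdot\one)a_J=p\left(\gcd(a_J)-\one\cdot a_J\right)+2\cdot\one\cdot a_J=0$, so the radial integral in the $J$-th coordinate is $\int_0^1 r^{-1}\,dr$---your anticipated logarithmic divergence, obtained by a one-line computation rather than a borderline estimate.
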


\subsection{Reinhardt domains}

Recall some elementary facts about holomorphic function theory on Reinhardt domains (which are always assumed to be centered at the origin).
 Let $\Omega \subset \mathbb{C}^n$ be a Reinhardt domain. Every holomorphic function $f\in \mathcal{O}(\Omega)$ admits a unique \emph{Laurent expansion} 
\begin{align}\label{eq-laurent}
	f = \sum_{\alpha\in (\mathbb{Z}^n)^\dagger} a_\alpha(f) \varphi_\alpha, 
\end{align}
where for $\alpha\in (\mathbb{Z}^n)^\dagger$, $\varphi_\alpha(z)$ is the Laurent monomial $z^\alpha$ as in \eqref{eq-monomial}, and 
where $a_\alpha(f) \in \cb$  is the $\alpha$-th Laurent coefficient.  The Laurent series of $f$ converges absolutely and uniformly to $f$ on every compact subset of $\Omega$. 

When $f$ lies in the Bergman space $A^2(\Omega)$, we can say more about the series \eqref{eq-laurent}: it is actually an orthogonal series converging in the Hilbert space
$A^2(\Omega)$, and the family of monomials
\begin{equation}
	\label{eq-onb}
	{\left\{\dfrac{\varphi_\alpha}{\norm{\varphi_\alpha}_{L^2}}:  \varphi_\alpha\in L^2(\Omega)\right\}}
\end{equation}
forms an orthonormal basis of $A^2(\Omega)$. In particular, if $f\in A^2(\Omega)$ then the Laurent series \eqref{eq-laurent} can have $a_\alpha(f){\neq}0$ only when $\varphi_\alpha\in L^2(\Omega)$.  It is possible to generalize some of these results to the spaces $A^p(\Omega)$; see \cite{ChEdMc19}.

\subsection{A criterion for unboundedness of the Bergman projection}
We now give an easily checkable condition which shows $L^p$-Bergman unboundedness on any Reinhardt domain.

\begin{lem}\label{lem-unboundedness}
Let $\Omega$ be a bounded Reinhardt domain in $\cx^n$, and let $p\geq 2$. Suppose that there is a multi-index $\beta\in (\mathbb{Z}^n)^\dagger$ such that 
\begin{equation} 
\varphi_\beta\in L^2(\Omega)\setminus L^p(\Omega).
\end{equation}
Then the Bergman projection $\bm{B}_\Omega$ fails to map $L^p(\Omega) \to L^p(\Omega)$.
\end{lem}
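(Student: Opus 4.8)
The plan is to exhibit a single explicit function in $L^p(\Omega)$ whose Bergman projection is not $p$-integrable, which shows that $\bm{B}_\Omega$ does not even map $L^p(\Omega)$ into $L^p(\Omega)$ as a set map. Since $\Omega$ is bounded (hence of finite measure) and $p\geq 2$, we have $L^p(\Omega)\subset L^2(\Omega)$, so $\bm{B}_\Omega$ is genuinely defined as the $L^2$-orthogonal projection on all of $L^p(\Omega)$ and no density/extension argument is needed. The starting observation is that, by the discussion surrounding \eqref{eq-onb}, the hypothesis $\varphi_\beta\in L^2(\Omega)$ already forces $\varphi_\beta\in A^2(\Omega)$; in particular $\varphi_\beta$ is holomorphic and $\bm{B}_\Omega\varphi_\beta=\varphi_\beta$.

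First I would construct a bounded test function $f$ (hence $f\in L^p(\Omega)\cap L^2(\Omega)$) whose pairing with $\varphi_\beta$ is nonzero. Since $\Omega$ is open and the coordinate hyperplanes have measure zero, the set $\Omega\cap(\cx^*)^n$ contains a compact set $K$ of positive Lebesgue measure, on which $\varphi_\beta$ is defined and nonvanishing. Taking $f=\dfrac{\varphi_\beta}{\abs{\varphi_\beta}}\chi_K$ gives $\abs{f}\leq 1$ and
\[\langle f,\varphi_\beta\rangle_{L^2}=\int_K \abs{\varphi_\beta}\,dV>0.\]
Because $f\in L^2(\Omega)$, the function $g:=\bm{B}_\Omega f\in A^2(\Omega)$ is defined, and by self-adjointness of $\bm{B}_\Omega$ together with $\bm{B}_\Omega\varphi_\beta=\varphi_\beta$, its $\beta$-th Laurent coefficient in \eqref{eq-laurent} is
\[a_\beta(g)=\frac{\langle g,\varphi_\beta\rangle}{\norm{\varphi_\beta}_{L^2}^2}=\frac{\langle f,\bm{B}_\Omega\varphi_\beta\rangle}{\norm{\varphi_\beta}_{L^2}^2}=\frac{\langle f,\varphi_\beta\rangle}{\norm{\varphi_\beta}_{L^2}^2}\neq 0.\]

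The decisive step is to isolate the single monomial $a_\beta(g)\varphi_\beta$ out of the holomorphic function $g$ by averaging over the Reinhardt torus action, and to observe that this extraction is a contraction in every $L^p$. For $\theta\in\mathbb{T}^n$ let $R_\theta$ denote the rotation $z\mapsto(e^{i\theta_1}z_1,\dots,e^{i\theta_n}z_n)$, an automorphism of $\Omega$ preserving Lebesgue measure, and set $\tau_\theta f=f\circ R_\theta$, so that $\tau_\theta$ is an isometry of $L^p(\Omega)$. Define the projection onto the $\beta$-character,
\[P_\beta h=\frac{1}{(2\pi)^n}\int_{\mathbb{T}^n}e^{-i\beta\cdot\theta}\,\tau_\theta h\,d\theta,\]
which by Minkowski's integral inequality satisfies $\norm{P_\beta h}_{L^p}\leq\norm{h}_{L^p}$ for all $p\geq 1$. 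For a holomorphic $h$, interchanging the locally uniformly convergent Laurent series \eqref{eq-laurent} with the torus integral yields $P_\beta h=a_\beta(h)\varphi_\beta$, since $\tfrac{1}{(2\pi)^n}\int_{\mathbb{T}^n}e^{i(\alpha-\beta)\cdot\theta}\,d\theta=\delta_{\alpha\beta}$. Now suppose, toward a contradiction, that $g=\bm{B}_\Omega f\in L^p(\Omega)$. Applying the contraction $P_\beta$ gives $a_\beta(g)\varphi_\beta=P_\beta g\in L^p(\Omega)$; since $a_\beta(g)\neq 0$ this forces $\varphi_\beta\in L^p(\Omega)$, contradicting $\varphi_\beta\notin L^p(\Omega)$. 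Hence $\bm{B}_\Omega f\notin L^p(\Omega)$, so $\bm{B}_\Omega$ fails to map $L^p(\Omega)\to L^p(\Omega)$.

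The main obstacle is the justification of the extraction identity $P_\beta h=a_\beta(h)\varphi_\beta$ together with the $L^p$-boundedness of $P_\beta$: one must verify that $\tau_\theta$ genuinely acts isometrically on $L^p(\Omega)$ (using that $R_\theta\in\mathrm{Aut}(\Omega)$ has unimodular constant Jacobian because $\Omega$ is Reinhardt), that the torus average commutes with the Laurent series of $g=\bm{B}_\Omega f$, and that $P_\beta$ annihilates every character except $\beta$. Everything else—the boundedness of $\Omega$, the self-adjointness of $\bm{B}_\Omega$, and the identification $\varphi_\beta\in A^2(\Omega)$ from \eqref{eq-onb}—is either elementary or already recorded in the preceding material.
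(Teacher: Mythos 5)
Your proof is correct, but it finishes differently from the paper's. The paper takes the globally defined test function $f(w)=\rho(w)^{\gamma}e^{i\beta\theta}$ (with $\gamma=(\abs{\beta_1},\dots,\abs{\beta_n})$), whose modulus is Reinhardt-symmetric; pairing $f$ against the orthonormal basis of monomials then shows \emph{all} Fourier coefficients of $\bm{B}_\Omega f$ vanish except the $\beta$-th, so $\bm{B}_\Omega f=C\varphi_\beta$ exactly and no further machinery is needed. Your truncated unimodular function $f=\tfrac{\varphi_\beta}{\abs{\varphi_\beta}}\chi_K$ does not have this property when $K$ is not torus-invariant, so $g=\bm{B}_\Omega f$ need not be a monomial; you compensate with the torus-averaging operator $P_\beta$, which is a norm-one operator on every $L^p(\Omega)$ (Minkowski's integral inequality plus the fact that the rotations $R_\theta$ are measure-preserving automorphisms of the Reinhardt domain $\Omega$) and extracts $a_\beta(g)\varphi_\beta$, the series--integral interchange being justified by uniform convergence of the Laurent series on the compact torus orbit of each point. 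What this buys you is robustness and reusability: your argument shows that \emph{any} $f\in L^p(\Omega)$ with $\ipr{f,\varphi_\beta}\neq 0$ has projection outside $L^p(\Omega)$, and the contraction $P_\beta$ is exactly the mechanism behind the general principle that $L^p$-boundedness of $\bm{B}_\Omega$ on a Reinhardt domain forces every monomial appearing in an $A^p$-expansion to lie in $L^p$. What the paper's route buys is economy: a single explicit computation in polar coordinates with no symmetrization step. (Had you chosen $K$ torus-invariant, e.g.\ a closed polyannulus, your $f$ would be a pure character in $\theta$ and your proof would collapse into the paper's.) Two small points in your favor: your observation that for $p\geq 2$ on a bounded domain one has $L^p(\Omega)\subset L^2(\Omega)$, so that $\bm{B}_\Omega$ is already defined on all of $L^p(\Omega)$ and no density/extension argument is needed, is correct and slightly cleaner than the paper's framing; and your appeal to the discussion around \eqref{eq-onb} for $\varphi_\beta\in A^2(\Omega)$ is legitimate, since on a Reinhardt domain $\varphi_\beta\in L^2(\Omega)$ forces $\Omega$ to avoid the coordinate hyperplanes $\{z_k=0\}$ for every $k$ with $\beta_k<0$, so $\varphi_\beta$ is genuinely holomorphic on $\Omega$.
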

\begin{proof}

Define subsets $\mathcal{J}_\beta, \mathcal{K}_\beta \subset \{1,2,\cdots,n \}$ with 
\begin{align*}
\mathcal{J}_\beta = \{\,j : \beta_j \ge 0 \}, \qquad \qquad  \mathcal{K}_\beta = \{\,k : \beta_k < 0 \},
\end{align*}
and let 
\[
f(w)= \prod_{j\in \mathcal{J}_\beta} w_j^{\beta_j} \times \prod_{k\in \mathcal{K}_\beta  } \left(\overline{w_k}\right)^{-\beta_k}.
\]
Then $f$ is a bounded function on $\Omega$, and therefore $f \in L^p(\Omega)$. 
We now show that $\bm{B}_\Omega f = C \varphi_\beta$ for some $C{\neq}0$. Since $\varphi_\beta\not\in L^p(\Omega)$, this will show that $\bm{B}_\Omega$ fails to map the element $f\in L^p(\Omega)$ to a function in $L^p(\Omega)$. This will imply that $ \bm{B}_\Omega$  is not bounded in the $L^p$-norm, since if it were so, it would extend to a map from 
the dense subspace $L^p(\Omega)\cap L^2(\Omega)$ to the whole of $L^p(\Omega)$. 

Let $\gamma=(\abs{\beta_1}, \dots, \abs{\beta_n})\in \mathbb{N}^n$ be the multi-index obtained by replacing each entry of $\beta$ by its absolute value. Write the polar form of $w$ as
$w=\rho(w)\gdot \exp(i\theta)$ for a $\theta\in \rl^n$. Then
$
f(w) = \rho(w)^\gamma e^{i \beta  \theta},
$
and for $\alpha\in (\mathbb{Z}^n)^\dagger$,  we  have
\[ 
\varphi_\alpha(w)=w^\alpha= \left(\rho(w)\gdot \exp(i\theta) \right)^\alpha= \rho(w)^\alpha e^{i \alpha  \theta}. 
\]
Further, denote by $\abs{\Omega}=\{\rho(z):z\in \Omega\}\subset \rl^n$ the Reinhardt shadow of $\Omega$,  and let $\mathbb{T}^n$ be the unit torus of $n$ dimensions.
Then, for each $\alpha\in \mathbb{Z}^n $, we have
\begin{align*}
	\ipr{\bm{B}_\Omega f, \varphi_\alpha}_{A^2(\Omega)}&= \ipr{f,\varphi_\alpha}_{L^2(\Omega)}= \int_\Omega f \,\ol{\varphi_\alpha}dV\\
	&=\int_\Omega r^\gamma e^{i\beta\theta}\cdot r^\alpha e^{-i\alpha\theta} dV
	= \int_{\abs{\Omega}} r^{\gamma+\alpha} r^\one dr \times \int_{\mathbb{T}^n} e^{i((\beta-\alpha)\cdot\theta)} d\theta\\
	& \begin{cases} =0 & \text{if }\alpha\not = \beta \\
	> 0 & \text{if } \alpha = \beta.
	\end{cases}
\end{align*}
Since \eqref{eq-onb} is an orthonormal basis of $A^2(\Omega)$ it follows that
all the Fourier coefficients of $\bm{B}_\Omega f$ with respect this basis vanish, except the $\beta$-th coefficient, which is nonzero. Therefore, 
$\bm{B}_\Omega f = C \varphi_\beta \notin L^p(\Omega),
$ for some constant $C{\neq}0$.
\end{proof}

\subsection{Preliminaries}\label{sec-prelim}

In this section (and the following section~\ref{sec-bounded}), we will use the following default notation and conventions: 
\begin{enumerate}
    \item $B\in M_n(\Z)$ is a matrix
such that the domain $\Uu$ is represented as in \eqref{eq-udef2}, 
\item $B$ satisfies the properties \eqref{eq-B}, which is not a loss of generality by Proposition~\ref{prop-uopen}.
\item We have $A=\adj B$.
\end{enumerate}
Observe that then by Theorem~\ref{thm-geometry}, the monomial map $\Phi_A$ is a proper holomorphic map of quotient type. 

We note the following computation:
\begin{prop} The upper bound in \eqref{eq-bounds}
is given by \begin{equation}
    \label{eq-upperbound}\frac{2\kappa(\Uu)}{\kappa(\Uu)-1}=\min_{j} \frac{2\cdot \one\cdot a_j}{\one\cdot a_j- \gcd(a_j)},
\end{equation}
where, as usual, $a_j$ is the $j$-th column of the matrix $A$.
\end{prop}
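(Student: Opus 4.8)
The plan is to unwind the definition \eqref{eq-kappa} of $\kappa(\Uu)$ in terms of the columns $a_j$ of $A=\adj B$, and then exploit the monotonicity of $x\mapsto 2x/(x-1)$. First I would use that $B^{-1}=(\det B)^{-1}\adj B=(\det B)^{-1}A$, so that the $k$-th column $B^{-1}e_k$ of $B^{-1}$ equals $(\det B)^{-1}a_k$, a nonzero rational multiple of the \emph{integer} column $a_k$. Since the projective height $\mathsf{h}$ depends only on the class in $\mathbb{P}^{n-1}(\Q)$, we have $\mathsf{h}(B^{-1}e_k)=\mathsf{h}(a_k)$; note $a_k\neq 0$ because $A$ is invertible, as $\det A=(\det B)^{n-1}\neq 0$.

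The key computation is to evaluate $\mathsf{h}(a_k)$ explicitly. Writing $\gcd(a_k)$ for the gcd of the entries of the integer column $a_k$, the primitive integer representative of $[a_k]$ is $a_k/\gcd(a_k)$, so \eqref{eq-height} gives $\mathsf{h}(a_k)=\sum_{i=1}^n \abs{(a_k)_i}/\gcd(a_k)$. Here I would invoke Proposition~\ref{prop-uopen}: since $B^{-1}\succeq 0$ and $\det B>0$, we have $A=(\det B)B^{-1}\succeq 0$, so every entry of $a_k$ is nonnegative and $\sum_i\abs{(a_k)_i}=\sum_i (a_k)_i=\one\cdot a_k$. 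Hence
\[
\mathsf{h}(B^{-1}e_k)=\frac{\one\cdot a_k}{\gcd(a_k)},\qquad\text{and therefore}\qquad \kappa(\Uu)=\max_{1\le k\le n}\frac{\one\cdot a_k}{\gcd(a_k)}.
\]

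Finally, set $g(x)=\frac{2x}{x-1}$, which is strictly decreasing for $x>1$, and write $h_k=\one\cdot a_k/\gcd(a_k)$ for the (positive integer) heights, so that $\kappa(\Uu)=\max_k h_k$. Applying the decreasing function $g$ converts the maximum into a minimum, and clearing the common factor $\gcd(a_k)$ identifies the individual terms:
\[
\frac{2\kappa(\Uu)}{\kappa(\Uu)-1}=g\!\left(\max_k h_k\right)=\min_{k} g(h_k)=\min_{k}\frac{2\,\one\cdot a_k}{\one\cdot a_k-\gcd(a_k)}.
\]
Since the indices $j$ and $k$ range over the same set $\{1,\dots,n\}$, this is exactly \eqref{eq-upperbound}. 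The only point requiring care is the degenerate case $h_k=1$ for some $k$ (equivalently $a_k$ proportional to a standard basis vector): the corresponding right-hand term is then $+\infty$, which is harmless since, $g$ being decreasing, such a term can be the minimum only when \emph{all} $h_k=1$, i.e. $\kappa(\Uu)=1$, in which case both sides equal $+\infty$. I expect this bookkeeping around the max/min interchange and the $h_k=1$ boundary value to be the only subtlety; the remainder is routine algebra.
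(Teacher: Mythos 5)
Your proposal is correct and follows essentially the same route as the paper's proof: projective invariance of $\mathsf{h}$ to reduce $\mathsf{h}(B^{-1}e_k)$ to $\mathsf{h}(a_k)$, the explicit formula $\mathsf{h}(a_k)=\one\cdot a_k/\gcd(a_k)$ via the nonnegativity of $A=\adj B$ from Proposition~\ref{prop-uopen}, and the strict monotonicity of $x\mapsto 2x/(x-1)$ to convert the maximum into a minimum. Your extra bookkeeping around the degenerate case $h_k=1$ (where a term on the right is $+\infty$) is a point the paper passes over silently, and handling it as you do is a small but genuine improvement in rigor, not a departure in method.
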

\begin{proof}Recall the definition of the projective 
height function as in  \eqref{eq-height}. Then we have, by projective 
invariance, for each $1\leq j \leq n$:
\[\mathsf{h}(B^{-1}e_j)=\mathsf{h}(\det B\cdot B^{-1}e_j)=\mathsf{h}_j(\adj B\cdot e_j)= \mathsf{h}(A e_j)=\mathsf{h}(a_j).  \]
Since $A\succ 0$, it follows that $a_j$ is a vector of non-negative 
integers, and the vector $\frac{1}{\gcd(a_j)}a_j$ is such that its entries are coprime non-negative 
integers. Therefore
\[\mathsf{h}(B^{-1}e_j)=\mathsf{h}(a_j)=\mathsf{h}\left(\frac{1}{\gcd(a_j)}a_j\right) = \sum_{k=1}^n \frac{\abs{a_j^k}}{\gcd(a_j)}=\frac{\one\cdot a_j}{\gcd(a_j)}.  \]
So 
\[ \kappa(\Uu)=\max_{j} \mathsf{h}(B^{-1}e_j)= \max_{j}\frac{\one\cdot a_j}{\gcd(a_j)}.\]

Since the function $x\mapsto \dfrac{2x}{x-1}$ is strictly decreasing for $x>1$, we have
\[ \frac{2\kappa(\Uu)}{\kappa(\Uu)-1}=\frac{2\cdot \max_{j}\dfrac{\one\cdot a_j}{\gcd(a_j)}}{\max_{j}\dfrac{\one\cdot a_j}{\gcd(a_j)}-1}=\min_j\frac{\dfrac{2\cdot\one\cdot a_j}{\gcd(a_j)}}{\dfrac{\one\cdot a_j}{\gcd(a_j)}-1}=\min_{j} \frac{2\cdot \one\cdot a_j}{\one\cdot a_j- \gcd(a_j)}.\]
\end{proof}

\subsection{$p$-allowable multi-indices}
Let $\beta\in (\Z^n)^\dagger$ be a multi-index. We say that $\beta$ is \emph{$p$-allowable} on a Reinhardt domain $\Omega$ if we have that the monomial
$\varphi_\beta\in L^p(\Omega)$, i.e., 
 \[ \int_\Omega \abs{\varphi_\beta}^p dV<\infty.\]
 We denote the collection of $p$-allowable multi-indices on $\Omega$ by $\mathscr{S}_p(\Omega)$. We first compute 
 the $p$-allowable multi-indices on the domain $\Uu$. Recall that the conventions introduced in Section~\ref{sec-prelim} are in force.

\begin{prop}
\label{prop-pallow}
Let $\beta\in (\Z^n)^\dagger$  and $p>0$. Then $\beta\in \mathscr{S}_p(\Uu)$ if and only if 
\begin{equation}
    \label{eq-pallow}(p\cdot\beta+ 2\cdot\one)A\succ 0.
\end{equation}
\end{prop}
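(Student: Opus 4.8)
The plan is to pull the defining integral $\int_\Uu \abs{\varphi_\beta}^p\,dV$ back to the product domain $\D^n_\slb$ by means of the proper holomorphic map $\Phi_A\colon \D^n_\slb\to\Uu$ furnished by Theorem~\ref{thm-geometry}, thereby reducing the finiteness question to the elementary convergence of a single monomial integral over a product of discs. Since $\Phi_A$ restricts to a regular $\abs{\Gamma}$-sheeted covering $\D^n_\slb\setminus Z_1\to\Uu\setminus Z_2$ off the coordinate hyperplanes $Z_1=Z_2=\{z^\one=0\}$, which have Lebesgue measure zero, I would first discard these exceptional sets (this also makes every Laurent monomial $z^{\beta A}$ unambiguously defined) and then apply the change-of-variables formula for this covering. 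Recalling that the real Jacobian of a holomorphic map equals $\abs{\det\Phi_A'}^2$, this yields
\[
\int_{\D^n_\slb}\abs{\varphi_\beta\circ\Phi_A}^p\,\abs{\det\Phi_A'}^2\,dV=\abs{\Gamma}\cdot\int_\Uu\abs{\varphi_\beta}^p\,dV,
\]
so that $\varphi_\beta\in L^p(\Uu)$ if and only if the left-hand integral is finite; both sides being integrals of nonnegative functions, this identity holds even when the common value is $+\infty$.

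Next I would evaluate the pulled-back integrand explicitly. By property~\eqref{eq-prop5} we have $\varphi_\beta\circ\Phi_A(z)=(z^A)^\beta=z^{\beta A}$, and by Lemma~\ref{lem-PhiA'} we have $\det\Phi_A'(z)=\det A\cdot z^{\one A-\one}$. Since $\det A\neq 0$, collecting the monomial factors shows that the integrand equals $\abs{\det A}^2$ times $\prod_{k=1}^n\abs{z_k}^{\mu_k}$, where the real row vector $\mu$ is
\[
\mu=p\,\beta A+2(\one A-\one)=(p\,\beta+2\,\one)A-2\,\one.
\]
Thus finiteness of the integral is equivalent to $\int_{\D^n_\slb}\prod_{k=1}^n\abs{z_k}^{\mu_k}\,dV<\infty$.

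Because $\D^n_\slb$ is a product of unit discs and punctured unit discs, this integral factors as a product of one-dimensional integrals $\int_{\D}\abs{z_k}^{\mu_k}\,dA(z_k)$, the puncture being irrelevant as it has measure zero. Passing to polar coordinates, each such factor is a constant multiple of $\int_0^1 r^{\mu_k+1}\,dr$, which converges precisely when $\mu_k>-2$. Hence the whole product is finite if and only if $\mu\succ-2\,\one$, i.e. $(p\,\beta+2\,\one)A-2\,\one\succ-2\,\one$, which is exactly the asserted condition $(p\,\beta+2\,\one)A\succ 0$. The only point requiring real care is the justification of the change-of-variables formula for the branched map $\Phi_A$; I expect this to be the main (though routine) obstacle, and it is handled by restricting to the regular locus where $\Phi_A$ is a genuine covering and invoking the local-biholomorphism change of variables sheet by sheet, exactly as in the proof of Proposition~\ref{prop-homothety}.
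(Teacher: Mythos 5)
Your proposal is correct and takes essentially the same route as the paper: pull the integral back through $\Phi_A$ (where the paper directly invokes the homothety identity \eqref{eq-phisharphomo} of Proposition~\ref{prop-homothety}, you re-derive it sheet by sheet over the regular covering), reduce via Lemma~\ref{lem-PhiA'} to a single monomial integral over $\D^n_\slb$, and test convergence factor by factor in polar coordinates, arriving at the same exponent condition $(p\,\beta+2\,\one)A\succ 0$. Your explicit remark that the pullback identity remains valid with common value $+\infty$ for nonnegative integrands is a point the paper leaves implicit, and is a welcome extra precaution since \eqref{eq-phisharphomo} is stated only for $f\in L^p(\Omega_2)$.
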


\begin{proof} By Theorem~\ref{thm-geometry}, $\Phi_A:\D^n_\slb\to \Uu$ is a proper 
holomorphic map of quotient type with $\det A$ sheets. 
Therefore, by  Proposition~\ref{prop-homothety}(in particular \eqref{eq-phisharphomo}) we see that
\begin{align*}
    \int_{\Uu} \abs{\varphi_\beta}^p dV &= \frac{1}{\abs{\Gamma}}\int_{{\D^n_{\mathsf{L}(B)}}}\abs{\Phi_A^\sharp(\varphi_\beta)}dV\\
    &=\frac{1}{{\det A}}\int_{{\D^n_{\mathsf{L}(B)}}}\abs{\varphi_\beta\circ \Phi_A}^p\abs{\det\Phi_A'}^2 dV\\
    &= \frac{1}{{\det A}}\int_{{\D^n_{\mathsf{L}(B)}}} \abs{z^{\beta\cdot A}}^p\abs{{\det A}\cdot z^{\one\cdot A-\one}}^2 dV(z)\\&\text{ using \eqref{eq-prop6} and \eqref{eq-det2}}\\
    &=(2\pi)^n{\det A}\int_{(0,1)^n}r^{p\cdot \beta A}\cdot{r^{2\cdot\one\cdot A-2\one }}\cdot r^{\one} dr\\
    &=(2\pi)^n{\det A}\int_{(0,1)^n}r^{p\cdot \beta A+ 2\cdot\one\cdot A- \one}dr\\
    &= (2\pi)^n{\det A}\prod_{j=1}^n \int_0^1 r_j^{p \cdot \beta a_j + 2 \cdot \one a_j -1} dr_j,
\end{align*} 
where $a_j \in \zb^n$ is the $j$-th column of $A$. It is clear that $\int_{\Uu} \abs{\varphi_\beta}^p dV<\infty$ if and only if $p \cdot \beta a_j + 2 \cdot \one a_j>0$ for each $1\leq j \leq n$. This completes the proof.
\end{proof}
\begin{rem}The above proposition can be seen as a special case of \cite[Lemma~2.2.1]{zwonekhab}.
\end{rem}

We now consider the important case $p=2$, so that $\mathscr{S}_2(\Omega)$ corresponds to the monomials in the Bergman space. For a matrix $A\in M_n(\Z)$, none of whose columns are zero,
denote by $\gcd(a_j)$ the greatest common divisor of the entries in the $j$-th column of $A$. We then let
\begin{equation}
\label{eq-gofa}
g(A)= \left(\gcd(a_1),\dots, \gcd(a_n)\right)\in (\Z^n)^\dagger 
\end{equation}
be the integer row vector whose $j$-th entry is the greatest common divisor of the $j$-th column of $A$.
\begin{prop}\label{prop-2allow}
  \begin{enumerate} 
    \item Let $\beta\in (\Z^n)^\dagger$. Then $\beta\in \mathscr{S}_2(\Uu)$ if and only if 
    \begin{equation}
        \label{eq-2allow}
        (\beta+ \one)A\succeq g(A).
    \end{equation}
\item For $1\leq j \leq n$ let $\Pi_j$ be the integer hypersurface determined 
by equality in the $j$-th entry of \eqref{eq-2allow}, that is
\begin{equation}
    \label{eq-Pij}\Pi_j=\left\{ \beta \in (\Z^n)^\dagger :(\beta+ \one)a_j= \sum_{k=1}^n (\beta_k+1)a^k_j=\gcd(a_j) \right\}.
\end{equation}
Then we have
\[ \Pi_j\cap \mathscr{S}_2(\Uu){\neq}\emptyset.\] 
\end{enumerate}

\end{prop}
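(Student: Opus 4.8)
The plan is to derive both parts from the $p=2$ case of Proposition~\ref{prop-pallow} together with one elementary divisibility remark and, for the second part, a lattice argument. Throughout I keep the conventions of Section~\ref{sec-prelim}, so $A=\adj B$ is invertible (indeed $\det A=(\det B)^{n-1}>0$), whence no column $a_j$ vanishes and each $\gcd(a_j)$ is a \emph{positive} integer.

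For part (1), Proposition~\ref{prop-pallow} with $p=2$ says $\beta\in\mathscr{S}_2(\Uu)$ if and only if $(2\beta+2\one)A\succ 0$, equivalently $(\beta+\one)A\succ 0$. The key observation is that $(\beta+\one)A\in(\Z^n)^\dagger$ is an \emph{integer} row vector whose $j$-th entry $(\beta+\one)a_j=\sum_k(\beta_k+1)a^k_j$ is divisible by $\gcd(a_j)$, since $\gcd(a_j)$ divides every entry of the column $a_j$. A positive integer multiple of the positive integer $\gcd(a_j)$ is automatically at least $\gcd(a_j)$; therefore the strict inequality $(\beta+\one)A\succ 0$ holds if and only if $(\beta+\one)A\succeq g(A)$, which is precisely \eqref{eq-2allow}.

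For part (2), writing $m=\beta+\one$, it suffices to produce an integer row vector $m\in(\Z^n)^\dagger$ with $m\,a_j=\gcd(a_j)$ and $m\,a_k\geq\gcd(a_k)$ for all $k\neq j$, and then set $\beta=m-\one$. First I would invoke B\'ezout to find an integer vector $u$ with $u\,a_j=\gcd(a_j)$. Next I would correct $u$ inside the sublattice $L_j=\{w\in(\Z^n)^\dagger: w\,a_j=0\}$, which has rank $n-1$ because $a_j\neq 0$; adding any $w\in L_j$ leaves the $j$-th constraint at equality. The map $w\mapsto (w\,a_k)_{k\neq j}$ from $L_j$ to $\Z^{n-1}$ is injective, for if $w\,a_k=0$ for every $k$ then $wA=0$ and hence $w=0$ by invertibility of $A$; thus its image is a full-rank sublattice of $\Z^{n-1}$. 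A full-rank lattice contains points with all coordinates simultaneously larger than any prescribed bound (round a real solution of $\sum_i c_i v^{(i)}=(M,\dots,M)$ for large $M$, where the $v^{(i)}$ are a lattice basis). Choosing $w\in L_j$ with $w\,a_k\geq \gcd(a_k)-u\,a_k$ for each $k\neq j$ and setting $m=u+w$ yields $m\,a_j=\gcd(a_j)$ and $m\,a_k\geq\gcd(a_k)$, so $\beta=m-\one$ lies in $\Pi_j\cap\mathscr{S}_2(\Uu)$ by part (1).

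The main obstacle is the \emph{simultaneous} nature of the inequalities in part (2): hitting the single equality $m\,a_j=\gcd(a_j)$ is trivial, but one must then steer within $L_j$ so as to satisfy all remaining $n-1$ lower bounds at once. The clean resolution is the observation that the functionals $w\mapsto w\,a_k$, $k\neq j$, restrict to a basis of the dual of $L_j\otimes\Q$ (their vanishing on $L_j$ would force $\sum_{k\neq j}\lambda_k a_k\in\operatorname{span}(a_j)$, impossible by linear independence of the columns of $A$), so the values $(w\,a_k)_{k\neq j}$ range over a full-rank lattice and can be driven jointly to $+\infty$.
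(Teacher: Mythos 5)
Your proof is correct. Part (1) coincides with the paper's argument: both reduce via Proposition~\ref{prop-pallow} with $p=2$ to the condition $(\beta+\one)A\succ 0$, and then upgrade the strict positivity to $(\beta+\one)A\succeq g(A)$ by observing that the $j$-th entry $(\beta+\one)a_j$ is an integer divisible by $\gcd(a_j)$, so positivity forces it to be at least $\gcd(a_j)$. For part (2) your skeleton also matches the paper's --- anchor a point of $\Pi_j$ by B\'ezout/Euclid, then translate within the rank-$(n-1)$ lattice $L_j=\{w:wa_j=0\}$ to meet the remaining inequalities --- but the decisive existence step is handled differently. The paper parametrizes $\Pi_j$ by a basis matrix $D$ of $L_j$ and invokes its Lemma~\ref{lem-part2}, whose proof begins by asserting that $\mathcal{O}=\{x\in(\rl^m)^\dagger:xP\succeq q\}$ is an unbounded convex set and then locates large cubes, hence lattice points, in its slices. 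You instead prove that $w\mapsto(wa_k)_{k\neq j}$ is injective on $L_j$ (via invertibility of $A$), deduce that its image is a full-rank sublattice of $\Z^{n-1}$, and round a real solution to push all coordinates past the prescribed bounds; the rounding error is bounded by a constant depending only on a lattice basis, so this works. Your route buys something genuine: Lemma~\ref{lem-part2} as stated is actually \emph{false} without a feasibility hypothesis --- for $P=(1,-1)$ (rank $1$) and $q=(1,1)$ the system $x\geq 1$, $-x\geq 1$ has no real solution, so the set $\mathcal{O}$ in the paper's proof is empty rather than unbounded. The lemma succeeds in the paper's application only because there the image of $t\mapsto tDA$ is the entire coordinate hyperplane $\{y:y_j=0\}$ (since $xA e_j=y_j$), while the $j$-th entry of the target $g(A)-(y+\one)A$ vanishes for $y\in\Pi_j$, so the polyhedron is indeed nonempty and unbounded. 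Your full-rank-lattice argument establishes exactly this feasibility explicitly, making your part (2) self-contained and in fact more careful than the paper's on precisely the point where the paper's auxiliary lemma is overstated.
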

The following Lemma will be needed in the proof of part (2) of the proposition.
\begin{lem} \label{lem-part2} For positive integers $m,n$, with $n\geq m$,
let $P$ be an $m\times n$ integer matrix of rank $m$,
and  let $q\in (\Z^n)^\dagger$. There there is an $x\in (\Z^m)^\dagger$ such that $xP\succeq q$.
\end{lem}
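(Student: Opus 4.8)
The plan is to reduce everything to a single real inequality and then round the real solution to the integer lattice, the whole difficulty being concentrated in one linear-algebraic input. Write $p_1,\dots,p_n\in\Z^m$ for the columns of $P$, so that the $k$-th entry of $xP$ is the scalar $xp_k$ and the task is to produce $x\in(\Z^m)^\dagger$ with $xp_k\ge q_k$ for every $1\le k\le n$. The engine of the argument is a single real direction: I first produce a row vector $d\in\rl^m$ with $dP\succ 0$, and then rescale it so that $dp_k\ge 1$ for all $k$, i.e. $dP\succeq\one$. Granting such a $d$, the integer solution is almost explicit, and no separate appeal to feasibility of the real system is even needed.

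For the construction and verification, I would set $c=\max\{0,\max_k q_k\}$ and $t=\max_k\sum_{i=1}^m\abs{p^i_k}$, and take $x=\lceil (c+t)\,d\rceil$, rounding each coordinate up to the nearest integer, so that $x\in(\Z^m)^\dagger$ and the rounding error $\epsilon:=x-(c+t)d$ has entries $\epsilon_i\in[0,1)$. Since $\epsilon_i p^i_k\ge-\abs{p^i_k}$ for every $i$, one gets $\epsilon p_k\ge-\sum_{i=1}^m\abs{p^i_k}$, and then for each $k$
\[ xp_k=(c+t)\,(dp_k)+\epsilon p_k\ \ge\ (c+t)-\sum_{i=1}^m\abs{p^i_k}\ \ge\ c\ \ge\ q_k, \]
using $dp_k\ge 1$, the choice $t\ge\sum_i\abs{p^i_k}$, and $c\ge q_k$. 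This is the entire rounding step; it is routine once $d$ is in hand.

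The existence of $d$ with $dP\succ 0$ is the real content, and the step I expect to be the main obstacle. By Gordan's theorem it is equivalent to the nonexistence of a nonzero $y\succeq 0$ with $Py=0$, i.e. to the assertion that $0$ is not a nontrivial nonnegative combination of the columns $p_k$; this is exactly where the full-row-rank hypothesis enters, since when the columns are linearly independent no such relation exists and $d$ is produced at once. A clean and very concrete way to see it, in the spirit of the proof of \eqref{eq-card}, is to bring $P$ to Smith normal form $P=U\Sigma V$ with $U\in GL_m(\Z)$, $V\in GL_n(\Z)$ and $\Sigma$ diagonal of rank $m$: the substitution $x'=xU$ (a bijection of $\Z^m$) together with $q'=qV^{-1}$ decouples the system into coordinatewise inequalities $\delta_i x'_i\ge q'_i$, each solvable because $\delta_i\neq 0$, and from such an $x'$ one reads off the desired direction. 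I would carry out the argument in whichever of these two forms is shortest; the positive-direction formulation has the advantage of making the rounding computation above completely transparent, while the Smith-normal-form computation makes the role of the rank hypothesis most explicit.
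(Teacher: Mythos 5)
Your rounding step is fine as far as it goes, but the engine it relies on --- a real row vector $d$ with $dP\succ 0$ --- does not exist under the stated hypotheses, and the justification you offer for it confuses row rank with column independence. The $n$ columns of $P$ live in $\rl^m$ with $n\geq m$, so for $n>m$ they are automatically linearly dependent, and nothing in the rank hypothesis rules out a \emph{nonnegative} dependence. Concretely, take $m=1$, $n=2$, $P=\begin{pmatrix}1 & -1\end{pmatrix}$, which has rank $m=1$: then $y=(1,1)^T\succeq 0$ is nonzero with $Py=0$, so by the very Gordan alternative you cite there is no $d$ with $dP\succ 0$ (it would require $d>0$ and $-d>0$). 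This failure is not a corner case you could legislate away: in the paper's sole application of Lemma~\ref{lem-part2} (part (2) of Proposition~\ref{prop-2allow}) one has $P=DA$, where the rows of $D$ span $\ker\phi$ with $\phi(x)=xa_j$; consequently the $j$-th entry of $xP$ equals $(xD)\,a_j=0$ for \emph{every} $x$, so a strictly positive direction is impossible precisely in the intended use (the system is feasible there only because the $j$-th entry of the target, $\gcd(a_j)-(y+\one)a_j$, is exactly $0$). Your remark that ``no separate appeal to feasibility of the real system is even needed'' is therefore exactly backwards: real feasibility is the entire content. Indeed the lemma as literally stated is false --- with $P=\begin{pmatrix}1 & -1\end{pmatrix}$ and $q=(1,1)$ the system $xP\succeq q$ reads $x\geq 1$ and $-x\geq 1$, which has no real, let alone integer, solution --- so no argument can run on $\rank P=m$ alone. (For comparison, the paper's proof works directly with the polyhedron $\mathcal{O}=\{x\in(\rl^m)^\dagger:xP\succeq q\}$, tacitly assuming it is nonempty and unbounded, and then extracts an integer point by slicing along an unbounded coordinate and finding a cube of side greater than $1$ in a large slice; that route tolerates a coordinate in which $xP$ is identically constant, which is exactly what defeats your strict-direction scheme. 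The feasibility assumption is what does the real work, and it holds for the specific $P$ and $q$ of the application.)

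The Smith-normal-form fallback does not repair this, for two separate reasons. First, from $x'\Sigma V\succeq q$ you cannot pass to $x'\Sigma\succeq qV^{-1}$: right multiplication preserves elementwise inequalities between row vectors only when the multiplying matrix has nonnegative entries, and a general unimodular $V^{-1}$ does not. Second, for $n>m$ the matrix $\Sigma$ is $m\times n$ with only $m$ nonzero diagonal entries, so even your (illegitimately) decoupled system contains $n-m$ constraints of the form $0\geq q'_k$ with no free variable attached; the claim ``each solvable because $\delta_i\neq 0$'' silently assumes $m=n$. In that square case your whole plan is actually correct --- $P$ is invertible, $d=\one P^{-1}$ gives $dP=\one\succ 0$, and the rounding computation finishes the proof --- but the case needed in the paper is $m=n-1<n$, where the strict direction provably does not exist. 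To salvage your approach you would have to replace ``find $d$ with $dP\succ 0$'' by establishing, from the specific structure of $P=DA$ and the specific right-hand side, that $\mathcal{O}$ is nonempty with a recession cone rich enough to create slack in all coordinates where slack is possible, treating the identically-zero coordinate separately; that is essentially the content the paper's slicing argument is organized around, and it is not a consequence of the rank hypothesis.
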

\begin{proof}
Let $\mathcal{O}=\{x\in (\rl^m)^\dagger:xP\succeq q \} .$ Then $\mathcal{O}$ is an unbounded convex set, so at least one of the coordinates $x_1,\dots, x_m$ is unbounded on $\mathcal{O}$. Rename the coordinates so that $x_1$ is unbounded. It follows that the projection
\[ \{x_1\in\rl: (x_1,\dots, x_m)\in \mathcal{O}\} \]
of $\mathcal{O}$
on the coordinate axis $x_1$ is an unbounded convex set, and therefore a ray or the whole of $\rl$.
For an integer $N$ let $C_N=\{x_1=N\}\cap \mathcal{O}$, which is naturally thought of as a subset of $(\rl^{m-1})^\dagger$.
Therefore   either there is an $N_1$ such that $C_N$ is nonempty if $N\geq N_1$
or there is an $N_2$ such that $C_N$ is  nonempty if $N\leq N_2$. Assuming the former, we see that the sets $C_N$ are convex subsets of $(\rl^{m-1})^\dagger$ and similar to each other, i.e. they are dilations of the same set. As the size of
each $C_N$ becomes infinite as $N\to \infty$, for large $N$, the set $C_N$ contains cubes of arbitrarily large size, where a cube is a product of intervals of the same size in each coordinate. As soon as $C_N$ contains a cube 
of side $(1+\epsilon)$ for some $\epsilon>0$, we see that there is a point
$M\in (\Z^{m-1})^\dagger$ that belongs to $C_N$. It follows that the point $(N,M)\in \Z\times(\Z^{m-1})^\dagger$ belongs to $\mathcal{O}$.

\end{proof}
\begin{proof}[Proof of Proposition~\ref{prop-2allow}]
	(1) If $p=2$, the condition \eqref{eq-pallow} becomes $2(\beta+\one)A\succ 0$, which is equivalent to 
	\begin{equation}
	    \label{eq-2allow2}  (\beta+\one)A\succ 0.
	\end{equation}
	Now the $j$-th entry of the row vector on the left of the above equation is given by
	$(\beta+\one)a_j=\sum_{k=1}^n (\beta_k+1)a^k_j , $
	which is a positive integer divisible by 
	$\gcd(a_j)=\gcd(a^1_j,\dots, a^n_j).$ It follows that \eqref{eq-2allow2} holds if and only if
		\[(\beta+\one)a_j \succeq \gcd(a_j),\]
	which is precisely the content of \eqref{eq-2allow}.
	
	(2) Fix $1\leq j \leq n$. By the Euclidean algorithm, $\Pi_j {\neq}\emptyset$.  Choose $y \in \Pi_j$. Define a $\Z$-module homomorphism  $\phi: (\Z^n)^\dagger \to \Z$ by setting $\phi(x)=xa_j$, i.e.,
	\[ \phi(x_1, \ldots, x_n) =  \sum_{k=1}^n x_k a^k_j.\]
	We  then see that $\Pi_j = y + \ker\phi$. Since  $\Z$ is a principal ideal domain,   $\ker \phi$ is a free   $\Z$-submodule of $(\Z^n)^\dagger$  of  rank $\leq n$ (see \cite[ Theorem 4, Chapter 12 (page 460)]{dummit}). Moreover as $\phi$ is surjective,  the quotient $\Z$-module $(\Z^n)^\dagger/\ker\phi$  is isomorphic to $\Z$. It can be seen, by tensoring with $\mathbb Q$ for example, that  the rank of $\ker \phi$ is $n-1$.  Let
	$D$ be an $(n-1)\times n$ integer matrix whose rows are a $\Z$-basis of $\ker \phi$. Then the map $f:(\Z^{n-1})^\dagger\to (\Z^n)^\dagger$ given by
	\[ f(t)=y+tD\]
	is a parametrization of $\Pi_j$, i.e., it is one-to-one and its range is precisely $\Pi_j$. To complete the proof of the  result, it is sufficient to show that 
	$f^{-1}(\mathscr{S}_2(\Uu))$ is a non-empty subset of $(\Z^{n-1})^\dagger$. Notice now that an integer vector $t\in  f^{-1}(\mathscr{S}_2(\Uu))$, 
	i.e. $f(t)\in \mathscr{S}_2(\Uu)\cap \Pi_j$ if and only if 
	\[ (y+tD+\one)A\succeq g(A),\]
	i.e.
	\[tDA \succeq g(A)-(y+\one)A.\]
	By Lemma~\ref{lem-part2}, there is an integer vector $t\in (\Z^{n-1})^\dagger$ that satisfies the above system of inequalities. This concludes the proof of part (2)

\end{proof}

\subsection{Proof of Proposition~\ref{prop-unboundedness}}
Recall that $p$ satisfies  \eqref{eq-p}. Now by \eqref{eq-upperbound},
\[ p\geq \frac{2\kappa(\Uu)}{\kappa(\Uu)-1}=\min_{j} \frac{2\cdot \one\cdot a_j}{\one\cdot a_j- \gcd(a_j)}, \]
so that there is a 
$J$ with $1\leq J \leq n$ such that 
\begin{equation}
    \label{eq-p2}p \geq \frac{2\cdot\one\cdot a_J}{\one\cdot a_J- \gcd(a_J)}.
\end{equation}
By part 2 of proposition~\ref{prop-2allow}, there is an $\beta\in (\Z^n)^\dagger$
which lies in $\mathscr{S}_2(\Uu)\cap \Pi_J$. Such a $\beta$ satisfies:
\begin{equation}
    \label{eq-beta1}(   \beta+\one)a_J = \gcd(a_J),
\end{equation}
and also
\begin{equation}
    \label{eq-beta2}
    (\beta+\one)A \succeq g(A),
\end{equation}
with $g(A)$ as in \eqref{eq-gofa}.
By construction, $\beta\in \mathscr{S}_2(\Uu)$. We now claim that $\beta\not \in \mathscr{S}_p(\Uu)$. By Lemma~\ref{lem-unboundedness} this shows that the Bergman projection is not bounded in $L^p(\Uu)$. To establish the claim we note that the $J$-th entry of the row vector
$(p\cdot\beta+2\cdot \one)A$ is
\begin{align}
    (p\cdot\beta+2\cdot \one)a_J&= 
    p\cdot(\beta_k+\one)\cdot a_J+(2-p) \cdot\one\cdot a_J\nonumber\\
    &= p \cdot \gcd(a_J)+(2-p)\cdot \one\cdot a_J\nonumber\\
    &=p\cdot \left(\gcd(a_J)- \one \cdot a_J \right)+2\cdot \one\cdot a_J\label{eq-beta3}
\end{align}
where in the second line we have used \eqref{eq-beta1}.
Thanks to the inequality \eqref{eq-p2} it follows that the 
quantity in \eqref{eq-beta3} is not positive. It follows by 
Proposition~\ref{prop-pallow} that $\beta$ is not in $L^p(\Uu)$, which establishes the claim and completes the proof.

\section{Boundedness of the Bergman projection}\label{sec-bounded}
In this section, we obtain the following part of
Theorem~\ref{thm-main}:
\begin{prop}
\label{prop-bounded}
Let
\begin{equation}
    \label{eq-hypbounded}
     2\leq p < \frac{2\kappa(\Uu)}{\kappa(\Uu)-1},
\end{equation}
then the Bergman projection is bounded on $L^p(\Uu)$.
\end{prop}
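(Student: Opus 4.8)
The plan is to reduce the statement to a weighted estimate on the model domain $\D^n_\slb$ via the transformation law, and then to prove that estimate by classical weighted estimates on the polydisc, using the $\Gamma$-invariance in an essential way. First I would invoke Theorem~\ref{thm-transformation}: with $A=\adj B$ and $\Gamma$ the deck group of $\Phi_A$ from Theorem~\ref{thm-geometry}, the boundedness of $\bm{B}_\Uu$ on $L^p(\Uu)$ is \emph{equivalent} to the boundedness of
\[ \bm{B}_{\D^n_\slb}:\left[L^p\!\left(\D^n_\slb,\abs{\det\Phi_A'}^{2-p}\right)\right]^{\Gamma}\to \left[A^p\!\left(\D^n_\slb,\abs{\det\Phi_A'}^{2-p}\right)\right]^{\Gamma}.\]
By Lemma~\ref{lem-PhiA'} the weight is $\abs{\det\Phi_A'}^{2-p}=\abs{\det A}^{2-p}\,\rho(z)^{(2-p)(\one A-\one)}$, a monomial weight; since $A\succeq 0$ is invertible we have $\one A\succeq\one$, and because $p\geq 2$ every exponent $(2-p)(\one\cdot a_j-1)$ is $\leq 0$. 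As $\D^n_\slb$ differs from the polydisc by a set of measure zero and has the same square-integrable monomials, $\bm{B}_{\D^n_\slb}=\bm{B}_{\D^n}$, whose kernel is the explicit product kernel \eqref{eq-bpdsk}. So the task reduces to a weighted $L^p$ bound for the polydisc Bergman projection on a $\Gamma$-invariant subspace.

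The second step is to prove this reduced estimate. Since everything is Reinhardt, $\bm{B}_{\D^n}$ is diagonalized by the Laurent monomials $\varphi_\alpha$, and by Proposition~\ref{prop-homothety} the $\Gamma$-invariant monomials are exactly those with $(\alpha+\one)A^{-1}\in(\Z^n)^\dagger$; equivalently, they are the pullbacks under $\Phi_A^\sharp$ of the full family $\varphi_\beta$, $\beta\in(\Z^n)^\dagger$, on $\Uu$. I would bound the operator by a Schur test applied to \eqref{eq-bpdsk} \emph{averaged over $\Gamma$} (so that the kernel is supported on the invariant exponent lattice), using product test functions of the form $\prod_j\abs{z_j}^{-s_j}$. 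The resulting one-dimensional integrals are handled by classical estimates for weighted Bergman projections on the disc; the Schur inequalities hold precisely when the $s_j$ can be chosen in a range whose admissibility is controlled by $p$, and a valid choice exists exactly for $p$ in the asserted interval.

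The crux — and the main obstacle — is that the $\Gamma$-invariance must be used to reach the \emph{sharp} upper endpoint. If one ignores invariance and estimates on the full weighted polydisc space, the weight alone only permits the integrability threshold governed by the column sums $\one\cdot a_j$, giving boundedness merely for $p<\min_j \tfrac{2(\one\cdot a_j)}{\one\cdot a_j-1}$; this is \emph{strictly} smaller than $\tfrac{2\kappa(\Uu)}{\kappa(\Uu)-1}$ whenever a relevant column $a_j$ of $A$ fails to be primitive (such non-primitive columns do occur for $n\ge 3$ even when $B$ has primitive rows). The gain comes from the invariance: on the invariant subspace the admissible exponents in each direction lie in a coset of the sublattice cut out by $(\alpha+\one)A^{-1}\in\Z^n$, whose gaps are measured by $\gcd(a_j)$, and this replaces $\one\cdot a_j$ by the projective height $\mathsf{h}(a_j)=\tfrac{\one\cdot a_j}{\gcd(a_j)}$ in the threshold. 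By \eqref{eq-upperbound} the worst direction then gives exactly $\min_j \tfrac{2\,\mathsf{h}(a_j)}{\mathsf{h}(a_j)-1}=\tfrac{2\kappa(\Uu)}{\kappa(\Uu)-1}$. Making this lattice bookkeeping rigorous — e.g. by straightening the invariance with a unimodular monomial substitution so that it decouples into one-dimensional gap conditions, or by carrying the $\gcd(a_j)$ through the averaged Schur test — is the technical heart of the proof.

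Finally, once boundedness is established on the invariant weighted space for $2\leq p<\tfrac{2\kappa(\Uu)}{\kappa(\Uu)-1}$, Lemma~\ref{lem-inv} guarantees that $\bm{B}_{\D^n}$ preserves $\Gamma$-invariance, so the commuting diagram \eqref{diag-bounded} transfers the bound back to $\bm{B}_\Uu$ on $L^p(\Uu)$, which proves Proposition~\ref{prop-bounded}. (Together with the complementary range $\tfrac{2\kappa(\Uu)}{\kappa(\Uu)+1}<p\leq 2$, deduced from this case by the self-adjointness of the Bergman projection, and with Proposition~\ref{prop-unboundedness}, this yields the full range in Theorem~\ref{thm-main}.)
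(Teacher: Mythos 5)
Your first step coincides with the paper's: via Theorem~\ref{thm-geometry} and Theorem~\ref{thm-transformation} the problem reduces to the boundedness of $\bm{B}_{\D^n_\slb}$ on $\left[L^p\left(\D^n_\slb,\abs{\det\Phi_A'}^{2-p}\right)\right]^{\Gamma}$, and you correctly diagnose that the $\gcd(a_j)$ gain coming from $\Gamma$-invariance is what upgrades the naive threshold $\min_j 2\,\one a_j/(\one a_j-1)$ to the sharp one in \eqref{eq-upperbound}. But the step that must deliver this gain --- your ``averaged Schur test'' with its lattice/gcd bookkeeping --- is exactly what you leave unexecuted, and you say so yourself. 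This is a genuine gap rather than a routine verification: Schur's test proves boundedness of a positive-kernel operator on an \emph{entire} weighted $L^p$ space, whereas (as you note) the needed estimate \emph{fails} on the full weighted space and holds only on the invariant subspace. To make a Schur-type argument see the invariance you would have to replace the polydisc kernel by its $\Gamma$-average and then carry out explicit lattice-sum estimates of precisely the kind that the earlier works \cite{EdhMcN16b,Chen17,zhang1,zhang2} performed downstairs with closed-form kernels; nothing in your sketch indicates how the auxiliary exponents $s_j$ would be chosen so that the averaged kernel, rather than the full kernel, controls the test integrals, and this is where the entire difficulty of the sharp endpoint sits.

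The paper closes this gap by a soft factorization worth comparing with your plan. It writes the reduced operator as $\jmath\circ P_\Gamma\circ\imath$, where: (i) for $p\geq 2$ the weight $\abs{\det\Phi_A'}^{2-p}=\abs{\det A}^{2-p}\rho^{(2-p)(\one A-\one)}$ is bounded below on $\D^n$, so the weighted invariant space includes continuously into $[L^p(\D^n)]^\Gamma$; (ii) the \emph{unweighted} projection is bounded by Proposition~\ref{P:LpBergmanPolydisc} and preserves invariance by Lemma~\ref{lem-inv}, so no weighted kernel estimate is ever needed; (iii) the invariance is spent exactly once, on the holomorphic side, through Corollary~\ref{cor-invfun}: every $f\in[A^p(\D^n)]^\Gamma$ factors as $f=z^{g(A)-\one}h$ with $h\in A^p(\D^n)$, and the hypothesis $p<2\kappa(\Uu)/(\kappa(\Uu)-1)$ is precisely what makes the resulting exponent $p(g(A)-\one A)+2\cdot\one A-2\cdot\one\succ-2\cdot\one$, after which the elementary Lemmas~\ref{lem-polydisc} and~\ref{lem-secondestimate} give the weighted inclusion $[A^p(\D^n)]^\Gamma\hookrightarrow\left[A^p\left(\D^n,\abs{\det\Phi_A'}^{2-p}\right)\right]^\Gamma$. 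That divisibility statement is the rigorous form of your ``lattice gap'' heuristic, converting the sublattice structure into a pointwise monomial factor $z^{g(A)-\one}$ that can be moved in and out of norms; supplying it, or an equivalent quantitative substitute inside your Schur framework, is what your proposal still needs. (A minor misattribution: the characterization of invariant monomials via $(\alpha+\one)A^{-1}\in(\Z^n)^\dagger$ is not Proposition~\ref{prop-homothety} but the unnumbered proposition preceding Corollary~\ref{cor-invfun}.)
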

We begin by recalling the following fact, which will be the main ``hard analysis" ingredient of the proof:

\begin{prop}\label{P:LpBergmanPolydisc}
The Bergman projection on the polydisc $\mathbb{D}^n$ is a bounded operator $\bm{B}_{\mathbb{D}^n}:L^p(\mathbb{D}^n) \to A^p(\mathbb{D}^n)$ for all $1<p<\infty$.
\end{prop}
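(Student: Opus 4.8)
The plan is to exploit the product structure of the polydisc and reduce the entire statement to the classical one-variable theorem. From the explicit formula \eqref{eq-bpdsk} the Bergman kernel factors as
\[ K_{\D^n}(z,w)=\prod_{j=1}^n K_{\D}(z_j,w_j),\qquad K_{\D}(\zeta,\eta)=\frac{1}{\pi}\frac{1}{(1-\zeta\ol\eta)^2}, \]
so that on the dense subspace $L^2(\D^n)\cap L^p(\D^n)$, Fubini's theorem lets us write $\bm{B}_{\D^n}$ as the composition $\bm{B}^{(1)}\circ\cdots\circ\bm{B}^{(n)}$ of $n$ commuting ``partial'' Bergman projections, where $\bm{B}^{(j)}$ integrates against $K_{\D}(z_j,w_j)$ in the single variable $w_j$ and leaves the other coordinates frozen. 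It then suffices to bound each $\bm{B}^{(j)}$ on $L^p(\D^n)$ uniformly and take the product of the $n$ operator norms.

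The analytic heart of the matter is the one-variable estimate $\bm{B}_{\D}\colon L^p(\D)\to L^p(\D)$ for $1<p<\infty$, which I would quote from \cite{ZahJud64,zhubergman} or reprove by Schur's test. Bounding the kernel by its absolute value $\tfrac{1}{\pi}\abs{1-\zeta\ol\eta}^{-2}$ and choosing the auxiliary weight $h(\zeta)=(1-\abs{\zeta}^2)^{-1/(pp')}$, with $p'$ the conjugate exponent, the two required Schur inequalities
\[ \int_{\D}\frac{h(\eta)^{p'}}{\abs{1-\zeta\ol\eta}^2}\,dA(\eta)\le C\,h(\zeta)^{p'}, \qquad \int_{\D}\frac{h(\zeta)^{p}}{\abs{1-\zeta\ol\eta}^2}\,dA(\zeta)\le C\,h(\eta)^{p} \]
both reduce, since $-1/p$ and $-1/p'$ lie in $(-1,0)$, to the standard Forelli--Rudin asymptotics for $\int_{\D}(1-\abs{\eta}^2)^{c}\abs{1-\zeta\ol\eta}^{-2}\,dA(\eta)$. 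This produces a constant $C_p$ with $\norm{\bm{B}_{\D}}_{L^p\to L^p}\le C_p$.

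Granting this, the passage to the polydisc is a routine slicing argument. For almost every choice of the variables $z_k$ ($k\ne j$), the slice $z_j\mapsto f(\dots,z_j,\dots)$ belongs to $L^p(\D)$, and the one-variable bound gives $\int_{\D}\abs{\bm{B}^{(j)}f}^p\,dA(z_j)\le C_p^{\,p}\int_{\D}\abs{f}^p\,dA(z_j)$ for each fixed value of those variables; integrating in the remaining $n-1$ variables and applying Tonelli's theorem yields $\norm{\bm{B}^{(j)}f}_{L^p(\D^n)}\le C_p\norm{f}_{L^p(\D^n)}$. Composing the $n$ partial projections gives $\norm{\bm{B}_{\D^n}}_{L^p\to L^p}\le C_p^{\,n}<\infty$. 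The only genuine obstacle is the one-dimensional Schur estimate; everything after it is formal, the single point requiring care being the justification of the factorization $\bm{B}_{\D^n}=\bm{B}^{(1)}\circ\cdots\circ\bm{B}^{(n)}$ and of the Fubini--Tonelli interchanges, both of which are legitimate on the dense subspace $L^2(\D^n)\cap L^p(\D^n)$ by absolute convergence.
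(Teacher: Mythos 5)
Your proposal is correct and takes essentially the same route as the paper: the one-variable case via Schur's test with the weight $(1-\abs{\zeta}^2)^{-1/(pp')}$ and Forelli--Rudin asymptotics (the argument of \cite{rudin,axler,durenbergman} that the paper cites), followed by a Fubini--Tonelli slicing argument exploiting the product kernel \eqref{eq-bpdsk} --- precisely the paper's ``textbook application of Fubini's theorem.'' The only difference is that you supply in full the details the paper delegates to references, including the factorization of $\bm{B}_{\D^n}$ into commuting partial projections and the absolute-convergence justification on $L^2(\D^n)\cap L^p(\D^n)$.
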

\begin{proof} For the polydisc $\mathbb{D}^n$, the Bergman projection has the well-known integral representation 
	\[ \bm{B}_{\mathbb{D}^n} f (z)= \int_{\mathbb{D}^n} K(z,w) f(w)  dV(w),\quad f\in L^2(\mathbb{D}^n),\]
	where $K$ is the Bergman kernel of the polydisc, which is easily shown to be given by the well-known formula \eqref{eq-bpdsk}.

The  case $n=1$ of Propsition~\ref{P:LpBergmanPolydisc}  is by now a staple result in Bergman theory, going back to \cite{ZahJud64}, where it was proved using the $L^p$-boundedness of a  Calderon-Zygmund singular integral operator. Another approach, based on Schur's test for $L^p$-boundedness of an integral operator,  was used in \cite{rudin}. An
alternative  proof of the main estimate needed in this method can be found in \cite{axler} and in the monograph \cite{durenbergman}.

   Since $\mathbb{D}^n$ is a product domain, the theorem in higher dimensions follows from a textbook application of Fubini's theorem to the case $n=1$.
\end{proof}

\subsection{Two lemmas} The following two simple lemmas will be used to deduce monomially weighted estimates starting from 
Proposition~\ref{P:LpBergmanPolydisc}:

\begin{lem}\label{lem-polydisc}
	Let $1\leq p<\infty$, let $n\geq 1$, and let $\gamma\in (\rl^n)^\dagger$ be such that 
	$\gamma\succ -2\cdot\one$. Then
	there is a $C>0$ such that for any $f\in A^p(\mathbb{D}^n)$ we have
	\begin{equation}\label{eq-firstestimate}
	\int_{\mathbb{D}^n} \abs{f}^p \rho^\gamma \,dV \leq C  \int_{\mathbb{D}^n} \abs{f}^p dV,
	\end{equation}
	where as usual, $\rho(z)^\gamma= \prod_{j=1}^n \abs{z_j}^{\gamma_j}$.
	\end{lem}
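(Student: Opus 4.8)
The plan is to reduce the $n$-dimensional weighted estimate to a one-dimensional estimate applied in each variable separately, and to prove the one-dimensional estimate by separating the singularity of the weight from the boundary behavior of $f$.

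First I would establish the following one-variable version: for $g\in A^p(\mathbb{D})$ and $\gamma>-2$ there is a constant $C(\gamma)>0$, independent of $g$, such that
\[
\int_{\mathbb{D}} \abs{g}^p \abs{z}^\gamma \, dA \le C(\gamma)\int_{\mathbb{D}}\abs{g}^p\, dA.
\]
To prove this, split the disc into the inner disc $\{\abs{z}<\tfrac12\}$ and the annulus $\{\tfrac12\le\abs{z}<1\}$. On the annulus the weight satisfies $\abs{z}^\gamma\le 2^{\abs{\gamma}}$, so that part of the integral is trivially dominated by $2^{\abs{\gamma}}\int_{\mathbb{D}}\abs{g}^p$. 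On the inner disc the weight is singular (when $\gamma<0$), but here $g$ is far from the boundary: for every $z$ with $\abs{z}<\tfrac12$ the disc $D(z,\tfrac14)$ lies in $\mathbb{D}$, so the sub-mean-value property of the subharmonic function $\abs{g}^p$ (valid for every $p>0$, since $\abs{g}^p=\exp(p\log\abs{g})$ is subharmonic) gives the uniform pointwise bound $\abs{g(z)}^p\le \tfrac{16}{\pi}\int_{\mathbb{D}}\abs{g}^p\,dA$. The weight itself is integrable on the inner disc precisely because $\gamma>-2$ forces $\int_0^{1/2}r^{\gamma+1}\,dr<\infty$, and multiplying the two estimates closes the one-dimensional case. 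The key point is that the resulting constant depends only on $\gamma$ (and not on $g$), which is what makes the iteration below work.

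Next I would iterate this over the coordinates. Setting $I_k=\int_{\mathbb{D}^n}\abs{f}^p\prod_{j=k}^n\abs{z_j}^{\gamma_j}\,dV$ for $1\le k\le n+1$ (so $I_1$ is the full weighted integral and $I_{n+1}=\int_{\mathbb{D}^n}\abs{f}^p\,dV$), I would freeze all variables except $z_k$ and apply the one-variable estimate to the slice $z_k\mapsto f(\dots,z_k,\dots)$, which is holomorphic in $z_k$. The remaining weights $\prod_{j>k}\abs{z_j}^{\gamma_j}$ are constants in $z_k$ and pull out of the inner integral, so Tonelli's theorem (all integrands are nonnegative, so no integrability hypothesis is needed in advance) yields $I_k\le C(\gamma_k)\,I_{k+1}$. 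Chaining these inequalities gives $I_1\le\bigl(\prod_{k=1}^n C(\gamma_k)\bigr)I_{n+1}$, which is exactly \eqref{eq-firstestimate} with $C=\prod_k C(\gamma_k)$.

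The only genuinely analytic point is the one-dimensional estimate, and within it the handling of the singular weight for general $p$: since orthogonality of monomials (which trivializes the case $p=2$) is unavailable, the sub-mean-value inequality for $\abs{g}^p$ is the essential tool, and the hypothesis $\gamma\succ-2\cdot\one$ enters exactly as the condition guaranteeing local integrability of the singular weight near the origin. Everything else is a bookkeeping application of Tonelli's theorem, and the uniformity of the one-dimensional constant across the frozen variables is what permits the clean telescoping.
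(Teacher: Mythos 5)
Your proof is correct and takes essentially the same route as the paper's: the same split of the disc into $\{\abs{z}<\tfrac12\}$ and the annulus, with a uniform sup bound on the inner disc (the paper cites the Bergman inequality where you derive it from the sub-mean-value property of $\abs{g}^p$) and the hypothesis $\gamma_j>-2$ entering exactly through integrability of the weight near $0$. Your telescoping over coordinates via Tonelli is just a repackaging of the paper's induction on $n$ with Fubini, so the two arguments are the same in substance.
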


\begin{proof} Throughout this proof, $C$ will denote a constant that depends only on $p$ and $\gamma$.  The actual value of $C$ may change from line to line.
	
Proceed by induction on the dimension $n$. First consider the base case $n=1$.  We have, by the Bergman inequality (cf. \cite[Theorem~1]{durenbergman})  that there is a $C>0$ such that
	\[ \sup_{\abs{z}< \frac{1}{2}} \abs{f(z)}\leq C \norm{f}_{L^p(\mathbb{D})} \]
	for all $f\in A^p(\mathbb{D})$. Therefore, for $f\in A^p(\mathbb{D})$ we have the estimate
	\begin{equation}
	\label{eq-est1}
	\int_{\abs{z}<\frac{1}{2}}\abs{f(z)}^p \abs{z}^\gamma dV(z)\leq  \sup_{\abs{z}< \frac{1}{2}} \abs{f(z)}^p \cdot \int_{\abs{z}<\frac{1}{2}} \abs{z}^\gamma dV(z) < C \cdot\norm{f}_{L^p(\mathbb{D})}^p,
	\end{equation}
	where we have used the fact that since $\gamma>-2$ we have
	\[\int_{\abs{z}<\frac{1}{2}} \abs{z}^\gamma dV(z)=2\pi\int_0^{\frac{1}{2}} r^{\gamma+1}dr = \frac{2\pi}{\gamma+2}\left(\frac{1}{2}\right)^{\gamma+2}<\infty. \]
	On the other hand, 
	\begin{equation}
	\label{eq-est2}
	\int_{\frac{1}{2}\leq \abs{z}<1} \abs{f(z)}^p \abs{z}^\gamma dV(z)\leq C \norm{f}_{L^p(\mathbb{D})}^p,
	\end{equation}
	where we have used the fact that
\[ \sup_{\frac{1}{2}\leq \abs{z}<1} \abs{z}^\gamma <\infty. \]	

	Adding \eqref{eq-est1} and \eqref{eq-est2}, the estimate ~\eqref{eq-firstestimate} follows in the case $n=1$.
	
	For the general case, assume the result established in $n-1$ dimensions. Write the coordinates of $\cx^n$ as $z=(z',z_{n})\in \cx^{n-1}\times \cx$, and $\gamma=(\gamma',\gamma_n)\in \rl^{n-1}\times \rl$.
 Then, using Fubini's theorem
	\begin{align*}
	\int_{\mathbb{D}^n}\abs{f}^p \rho^{\gamma} dV& = \int_{\mathbb{D}^{n-1}}\rho(z')^{\gamma'}\left(\int_{\mathbb{D}} \abs{f(z',z_n)}^p\abs{z_n}^{\gamma_n}dV(z_n)\right)dV(z')\\
	&\leq C  \int_{\mathbb{D}^{n-1}}\rho(z')^{\gamma'}\left(\int_{\mathbb{D}} \abs{f(z',z_n)}^p dV(z_n)\right)dV(z')\\
	&\leq C \int_{\mathbb{D}}\left(\int_{\mathbb{D}^{n-1}}\abs{f(z',z_n)}^p \rho(z')^{\gamma'} dV(z')\right)dV(z_n)\\
	&\leq C  \int_{\mathbb{D}}\left(\int_{\mathbb{D}^{n-1}}\abs{f(z',z_n)}^p dV(z')\right)dV(z_n)\\
	&=C 	\int_{\mathbb{D}^n}\abs{f(z',z_n)}^p  dV(z',z_n),
	\end{align*}
	which proves the result.
\end{proof}

\begin{lem} \label{lem-secondestimate}	Let $1\leq p<\infty$, let $n\geq 1$ and let $\lambda\in \mathbb{N}^n$ be a multi-index of non-negative integers.
Then  there is a $C>0$ such that for all $f\in A^p(\mathbb{D}^n)$ we have the estimate:
	\begin{equation}
	\label{eq-polydisc2}\int_{\mathbb{D}^n}\abs{f}^p dV \leq C \int_{\mathbb{D}^n}\abs{\varphi_\lambda f}^p dV,
	\end{equation}
where $\varphi_\lambda(z)=z^\lambda$ is as in \eqref{eq-monomial}.
\end{lem}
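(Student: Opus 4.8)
The plan is to reduce the $n$-dimensional multi-index estimate to the one-variable estimate with a single power, and then to prove the latter using the monotonicity of the $L^p$-integral means of a holomorphic function.

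First I would reduce to the case $n=1$, $\lambda = 1$. Writing $m = \sum_{j=1}^n \lambda_j$, the monomial $\varphi_\lambda$ can be built up one factor at a time, so it suffices to prove that for every $h \in A^p(\mathbb{D}^n)$ and every index $j$ there is a constant $C_p$, depending only on $p$, with $\int_{\mathbb{D}^n}\abs{h}^p\,dV \le C_p\int_{\mathbb{D}^n}\abs{z_j h}^p\,dV$. Applying this $m$ times along a chain of monomials $z^\mu$ increasing from $0$ to $\lambda$ — at each stage $h=z^\mu f$ again lies in $A^p(\mathbb{D}^n)$, being the product of the bounded function $z^\mu$ with $f$ — yields \eqref{eq-polydisc2} with $C = C_p^m$. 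In turn, freezing the coordinates $z'$ other than $z_j$ and applying Fubini's theorem in the $z_j$-variable (the slice is holomorphic on $\mathbb{D}$ for every $z'$ and lies in $A^p(\mathbb{D})$ for almost every $z'$), this single-factor inequality reduces to the one-variable statement
\[ \int_{\mathbb{D}}\abs{u}^p\,dV \le C_p\int_{\mathbb{D}}\abs{z u}^p\,dV, \qquad u \in A^p(\mathbb{D}), \]
the constant being usable uniformly in $z'$ precisely because it depends on $p$ alone.

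To prove this one-variable inequality I would pass to polar coordinates and set $\phi(r) = \int_0^{2\pi}\abs{u(re^{i\theta})}^p\,d\theta$. The key input is the classical fact that, since $\abs{u}^p$ is subharmonic (here $p \ge 1$), the circular mean $\phi$ is non-decreasing on $[0,1)$ (monotonicity of integral means, cf. \cite{durenbergman}). Then $\int_{\mathbb{D}}\abs{u}^p\,dV = \int_0^1 \phi(r)\,r\,dr$ while $\int_{\mathbb{D}}\abs{z u}^p\,dV = \int_0^1 \phi(r)\,r^{p+1}\,dr$, so the claim becomes $\int_0^1\phi(r)\,r\,dr \le C_p\int_0^1\phi(r)\,r^{p+1}\,dr$. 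I would split the left integral at $r=\tfrac12$: on $(\tfrac12,1)$ one has $r \le 2^p r^{p+1}$, giving direct control by the right-hand integral; on $(0,\tfrac12)$ monotonicity gives $\int_0^{1/2}\phi(r)\,r\,dr \le \tfrac18\phi(\tfrac12)$, and again by monotonicity $\phi(\tfrac12)$ is bounded by a fixed multiple of $\int_{1/2}^1\phi(r)\,r^{p+1}\,dr$, completing the estimate.

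The conceptual heart of the argument — and the only place where holomorphy enters — is the monotonicity of the means $\phi(r)$: it is exactly this that prevents a holomorphic function from concentrating its $L^p$-mass near the coordinate axes, where the weight $\abs{\varphi_\lambda}^p = \rho^{p\lambda}$ degenerates. Everything else is the bookkeeping of the Fubini reduction and an elementary radial splitting, so I do not anticipate a serious obstacle; the only point requiring care is the uniformity of the one-variable constant across the frozen coordinates, which is automatic since it depends on $p$ alone.
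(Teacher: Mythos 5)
Your proof is correct, and its outer skeleton matches the paper's: reduce to a single coordinate factor $z_j$, iterate over the entries of $\lambda$ (each intermediate $z^\mu f$ staying in $A^p(\mathbb{D}^n)$ since $z^\mu$ is bounded), pass to one variable by Fubini with uniformity across the frozen coordinates coming from a constant depending on $p$ alone, and split the radial integral at $r=\tfrac12$ with the trivial bound $r\le 2^p r^{p+1}$ on the outer annulus. Where you genuinely diverge is in the one-variable core. The paper controls the inner disc $\{|z|\le\tfrac12\}$ via a sup-norm route: it applies the maximum principle to $zf(z)$ together with Bergman's pointwise inequality, bounding $\sup_{|w|\le 1/2}|wf(w)|^p$ by $C\int_{\mathbb{D}}|wf(w)|^p\,dV$ and then invoking the maximum principle once more for $f$ itself. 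You instead use that $|u|^p$ is subharmonic, so the circular means $\phi(r)=\int_0^{2\pi}|u(re^{i\theta})|^p\,d\theta$ are non-decreasing, and compare $\int_0^1\phi(r)\,r\,dr$ with $\int_0^1\phi(r)\,r^{p+1}\,dr$; your numerics check out, since $\int_0^{1/2}\phi(r)\,r\,dr\le\tfrac18\phi(\tfrac12)$ and $\phi(\tfrac12)\le \frac{p+2}{1-2^{-(p+2)}}\int_{1/2}^1\phi(r)\,r^{p+1}\,dr$, both by monotonicity. Your route buys two things: it is more self-contained, needing no pointwise evaluation estimate (the paper's Bergman inequality) but only the classical monotonicity of means; and since $|u|^p$ is subharmonic for \emph{every} $p>0$, it extends verbatim below $p=1$, which the statement does not need but which costs nothing. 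One small remark: your one-variable inequality holds for every holomorphic $u$ on $\mathbb{D}$ with the convention that it is vacuous when the right side is infinite, so in the Fubini step you do not actually need the slices to lie in $A^p(\mathbb{D})$ --- your parenthetical ``for almost every $z'$'' is harmless but unnecessary.
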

\begin{proof}
	We need only to prove the case in which $\lambda=(1,0,\dots,0)$, so that $\varphi_\lambda(z)=z_1$. Once this special case is established, the general result follows by repeatedly applying it and 
	permuting the coordinates.
	
	In what follows, $C$ will denote some positive constant that depends only on $p$ and $\lambda$, where the actual value of $C$ may change from line to line.
	First consider the one dimensional case, so that we have to show that for a holomorphic function $f$ on the disc we have
	\[\int_{\mathbb{D}}\abs{f(z)}^p dV(z) \leq C \int_{\mathbb{D}}\abs{z f(z)}^p dV(z),  \]
	where the left hand side is assumed to be finite (and therefore the right hand side is finite.) First note that we obviously have
	\begin{equation}
	\label{eq-est3}
	\int_{\frac{1}{2}\leq \abs{z}<1}\abs{f(z)}^p dV(z) \leq 2^p  \int_{\frac{1}{2}\leq \abs{z}<1}\abs{z f(z)}^p dV(z).
	\end{equation}
	On the other hand, if $\abs{z}=\frac{1}{2}$, we have
\begin{align*}
    \abs{f(z)}^p &= 2^p \abs{z f(z)}^p\\
	&\leq 2^p \sup_{\abs{w}\leq \frac{1}{2}} \abs{w f(w)}^p& \text{(maximum principle)}\\
	&\leq C \int_{\mathbb{D}}\abs{w f(w)}^p dV(w) &\text{(Bergman's inequality)}
	\end{align*}
	The maximum principle now implies
	\[  \sup_{\abs{z}\leq \frac{1}{2}} \abs{f(z)}^p \leq C \int_{\mathbb{D}}\abs{z f(z)}^p dV(z),  \]
	so that we have
	\begin{equation}\label{eq-est4}
	\int_{\abs{z}\leq \frac{1}{2}} \abs{f(z)}^p \leq C \int_{\mathbb{D}}\abs{z f(z)}^p dV(z). 
	\end{equation}
	Combining \eqref{eq-est3} and \eqref{eq-est4} the result follows for $n=1$.
	
	For the higher-dimensional case, denote the coordinates of $\cx^n$ as $(z_1, z')$ where $z'=(z_2,\dots, z_n)$. Then for $f\in A^p(\mathbb{D}^n)$ we have
	\begin{align*}
	\int_{\mathbb{D}^n} \abs{f(z_1, z')}^pdV(z_1, z')& = \int_{\mathbb{D}^{n-1}} \left(\int_{\mathbb{D}} \abs{f(z_1,z')}^p dV(z_1)\right)dV(z')\\
	&\leq C \int_{\mathbb{D}^{n-1}} \left(\int_{\mathbb{D}} \abs{z_1f(z_1,z')}^p dV(z_1)\right)dV(z')\\
	&= C\int_{\mathbb{D}^{n}} \abs{z_1f(z_1,z')}^p dV(z_1,z').
	\end{align*}	
\end{proof}

\subsection{Determination of  $\Gamma$-invariant subspaces }
In this subsection and the next subsection \ref{sec-bddproof}, we use 
the notation established in subsection~\ref{sec-prelim} above, so that $B$ and $A$ have the same meaning as there. The group $\Gamma$ is as in 
Theorem~\ref{thm-geometry}: the deck-transformation group associated 
to the map $\Phi_A$.
We will now determine the $\Gamma$-invariant subspaces of holomorphic functions, in the sense of \eqref{eq-gammainv}:
\begin{prop}
 Let $\alpha\in (\Z^n)^\dagger$. Then the monomial $\varphi_\alpha(z)=z^\alpha$ belongs to the space
$\left[\mathcal{O}\left((\D^*)^n\right)\right]^\Gamma$ of $\Gamma$-invariant holomorphic functions on $(\D^*)^n$
 if and only if there is a
$\beta\in (\Z^n)^\dagger$ such that 
\[\alpha=\beta A-\one. \]
\end{prop}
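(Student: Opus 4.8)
The plan is to compute directly the action of each deck transformation on the monomial $\varphi_\alpha$ and thereby reduce $\Gamma$-invariance to an integrality condition. First I would note that for any $\alpha\in(\Z^n)^\dagger$ the Laurent monomial $\varphi_\alpha(z)=z^\alpha$ is holomorphic on $(\cx^*)^n$, hence on $(\D^*)^n$, so membership in $\left[\mathcal{O}\left((\D^*)^n\right)\right]^\Gamma$ is governed entirely by the invariance condition $\varphi_\alpha=\sigma^\sharp(\varphi_\alpha)$ for all $\sigma\in\Gamma$. By Theorem~\ref{thm-geometry}, $\Gamma=\{\sigma_\nu:\nu\in\Z^n\}$ with $\sigma_\nu(z)=\exp\left(2\pi i A^{-1}\nu\right)\gdot z$ as in \eqref{eq-sigmanu}; writing $c=c(\nu)=\exp\left(2\pi i A^{-1}\nu\right)$, each $\sigma_\nu$ is the diagonal linear map $z\mapsto c\gdot z$.

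Next I would evaluate $\sigma_\nu^\sharp(\varphi_\alpha)$ using the definition \eqref{eq-sharp} of the pullback. Since $\sigma_\nu$ is diagonal and linear, its complex derivative is $\diag(c_1,\dots,c_n)$, so $\det\sigma_\nu'=c^\one$; and by \eqref{eq-prop1} we have $\varphi_\alpha\circ\sigma_\nu(z)=(c\gdot z)^\alpha=c^\alpha z^\alpha$. Combining these,
\[\sigma_\nu^\sharp(\varphi_\alpha)=c^\alpha\varphi_\alpha\cdot c^\one=c^{\alpha+\one}\,\varphi_\alpha.\]
Hence $\varphi_\alpha$ is $\sigma_\nu$-invariant precisely when $c^{\alpha+\one}=1$. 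Using the exponential-power identity \eqref{eq-expprop}, $c^{\alpha+\one}=\exp\left(2\pi i A^{-1}\nu\right)^{\alpha+\one}=\exp\!\big(2\pi i\,(\alpha+\one)A^{-1}\nu\big)$, so the invariance condition for the single element $\sigma_\nu$ reads $(\alpha+\one)A^{-1}\nu\in\Z$.

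Finally I would quantify over all of $\Gamma$: the monomial $\varphi_\alpha$ lies in $\left[\mathcal{O}\left((\D^*)^n\right)\right]^\Gamma$ if and only if $(\alpha+\one)A^{-1}\nu\in\Z$ for every $\nu\in\Z^n$. Testing against $\nu=e_k$ for $1\le k\le n$ shows this is equivalent to the row vector $(\alpha+\one)A^{-1}$ having all integer entries (the converse being immediate, since an integer row vector pairs integrally with every $\nu\in\Z^n$). Setting $\beta=(\alpha+\one)A^{-1}$, integrality of $\beta$ is exactly the statement that there is a $\beta\in(\Z^n)^\dagger$ with $\alpha+\one=\beta A$, i.e.\ $\alpha=\beta A-\one$; conversely, given such a $\beta\in(\Z^n)^\dagger$ one has $(\alpha+\one)A^{-1}=\beta\in(\Z^n)^\dagger$, recovering invariance. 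This yields the desired equivalence.

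The computation is short and presents no genuine obstacle; the only points requiring care are the bookkeeping of the determinant factor $\det\sigma_\nu'=c^\one$ (which is what produces the shift by $\one$, and hence the ``$-\one$'' in the final formula), and the correct passage from ``$(\alpha+\one)A^{-1}\nu\in\Z$ for all $\nu$'' to the integrality of the entire vector $(\alpha+\one)A^{-1}$.
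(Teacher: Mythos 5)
Your proposal is correct and follows essentially the same route as the paper: both compute $\sigma_\nu^\sharp(\varphi_\alpha)=e^{2\pi i(\alpha+\one)A^{-1}\nu}\varphi_\alpha$ (your determinant factor $c^\one$ is exactly the paper's $e^{2\pi i(\one A^{-1})\nu}$), reduce invariance to the condition $(\alpha+\one)A^{-1}\nu\in\Z$ for all $\nu\in\Z^n$, and conclude by testing against the standard basis vectors that $\beta=(\alpha+\one)A^{-1}$ is an integer row vector. The argument is complete as written.
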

\begin{proof}
Recall that, by definition, the monomial  $\varphi_\alpha$ is invariant under the group action of $\Gamma$ if and only if $\sigma_\nu^\sharp(\varphi_\alpha)= (\varphi_\alpha \circ \sigma_\nu) \det \sigma_\nu' = \varphi_\alpha$
  for all $\sigma_\nu \in \Gamma$, where for $\nu \in \Z^n$, the automorphism $\sigma_\nu\in \Gamma$ is as in \eqref{eq-sigmanu}. . 
 Denoting the rows of $A^{-1}$  be denoted by  by $c^1, \ldots, c^n$, for $\nu \in \Z^n$, by the linearity of $\sigma_\nu$ 
\[\det \sigma_\nu' = \det \sigma_\nu=\prod_{j=1}^n e^{2 \pi i c^j \nu}
= e^{2 \pi i \sum_{j=1}^n c^j \nu}= e^{2 \pi i (\one A^{-1}) \nu}. \]
Also, for $z \in (\D^*)^n$ and $ \nu \in \Z^n$, 
\[\varphi_\alpha \circ \sigma_\nu (z) = \varphi_\alpha( \exp( 2 \pi i A^{-1} \nu) \gdot z ) = (\exp( 2 \pi i A^{-1} \nu) \gdot z )^\alpha
= e^{2 \pi i \alpha A^{-1} \nu} z^\alpha.
\]
Therefore
\begin{equation}
    \label{eq-sigmasharp}
    \sigma_\nu^\sharp(\varphi_\alpha)(z)= e^{2 \pi i \alpha A^{-1} \nu}\cdot   z^\alpha \cdot e^{2 \pi i \one A^{-1} \nu}=e^{2\pi i (\alpha +\one)A^{-1}\nu}\cdot z^\alpha=e^{2\pi i (\alpha +\one)A^{-1}\nu}\cdot \varphi_\alpha(z).
\end{equation}

Hence 
    $\varphi_\alpha \in  \left[\mathcal{O}\left({\D^n_{\mathsf{L}(B)}}\right)\right]^\Gamma$ if and only if $\sigma_\nu^\sharp(\varphi_\alpha)=\varphi_\alpha$ for all $\nu\in \Z^n$,
i.e.,     
    \[e^{2\pi i (\alpha +\one)A^{-1}\nu}\cdot z^\alpha = z^\alpha\]
    for all $z \in (\D^*)^n$ and $ \nu \in \Z^n, $ i.e., if and only if   $(\alpha + \one) A^{-1} \nu \in \Z \text{ for all  } \nu \in \Z^n. $
 
  Now if there is $\beta \in (\Z^n)^\dagger$ such that $\alpha = \beta A - \one$, then clearly,   $(\alpha + \one) A^{-1} \nu =\beta\nu\in \Z$   for all $\nu \in \Z^n$. Conversely, assume that  $(\alpha + \one) A^{-1} \nu \in \Z$   for all $\nu \in \Z^n$ and let $e_1,\dots, e_n$ be the standard basis of $\Z^n$. Then the $j$-th column of $(\alpha + \one) A^{-1}= (\alpha + \one) A^{-1}I$ (where $I$ is the $n\times n$ identity matrix) is  $(\alpha+\one)A^{-1}e_j$ which is therefore in $\Z$. Therefore we have $(\alpha + \one) A^{-1}=\beta \in (\Z^n)^\dagger$. It follows that
$\alpha = \beta A - \one$ 
as desired.  \end{proof}

\begin{cor}\label{cor-invfun}
Let $f\in \left[\mathcal{O}(\D^n)\right]^\Gamma$. Then there is an
$h\in \mathcal{O}(\D^n)$ such that 
\[f(z)= z^{g(A)-\one}\cdot h(z), \]
with $g(A)$ as in \eqref{eq-gofa}.
\end{cor}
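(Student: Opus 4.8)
The plan is to reduce everything to the monomial characterization in the Proposition immediately preceding this corollary, by expanding $f$ in its Taylor series. Since $f\in\mathcal{O}(\D^n)$ is holomorphic on the \emph{full} polydisc, it has a Taylor expansion $f=\sum_{\alpha\in\N^n}c_\alpha\varphi_\alpha$, converging uniformly on compact subsets, with all exponents non-negative. The $\Gamma$-invariance $\sigma_\nu^\sharp f=f$ can then be checked coefficient by coefficient: applying $\sigma_\nu^\sharp$ termwise and using the monomial identity \eqref{eq-sigmasharp} gives $\sigma_\nu^\sharp f=\sum_\alpha c_\alpha\, e^{2\pi i (\alpha+\one)A^{-1}\nu}\,\varphi_\alpha$. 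By uniqueness of Taylor coefficients, $c_\alpha\neq 0$ forces $e^{2\pi i (\alpha+\one)A^{-1}\nu}=1$ for every $\nu\in\Z^n$, which is exactly the condition extracted in the proof of the preceding proposition for $\varphi_\alpha$ to be $\Gamma$-invariant. Hence every $\alpha$ in the support of the expansion of $f$ has the form $\alpha=\beta A-\one$ for some $\beta\in(\Z^n)^\dagger$.

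The crux is then a short divisibility argument showing $\alpha\succeq g(A)-\one$ for each such $\alpha$. Fix $\alpha=\beta A-\one$ with $c_\alpha\neq 0$ and examine the $j$-th entry $(\alpha+\one)_j=(\beta A)_j=\beta a_j$. Because $f$ is holomorphic across the coordinate hyperplanes we have $\alpha\succeq 0$, so $\beta a_j=\alpha_j+1\geq 1$ is a strictly positive integer; on the other hand $\beta a_j=\sum_k \beta_k a^k_j$ is an integer multiple of $\gcd(a_j)=g(A)_j$, by \eqref{eq-gofa}. A positive integer divisible by $g(A)_j\geq 1$ is at least $g(A)_j$, so $\beta a_j\geq g(A)_j$ for every $j$, i.e. $\alpha+\one=\beta A\succeq g(A)$, equivalently $\alpha\succeq g(A)-\one$. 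Here $g(A)-\one\succeq 0$ because $A\succeq 0$ is invertible, so each column $a_j$ is a nonzero vector of non-negative integers with $\gcd(a_j)\geq 1$, and $z^{g(A)-\one}$ is a genuine monomial.

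Finally I would set $h(z)=\sum_\alpha c_\alpha\,\varphi_{\alpha-(g(A)-\one)}(z)$, a power series in non-negative exponents with the same coefficients $c_\alpha$ as $f$, hence convergent on all of $\D^n$ and holomorphic there; by construction $z^{g(A)-\one}h(z)=f(z)$ on $(\D^*)^n$, and therefore on $\D^n$ by continuity. The only point that needs genuine care is the interplay in the divisibility step between the two constraints on $\alpha$ --- the non-negativity $\alpha\succeq 0$ coming from holomorphy on the \emph{unpunctured} polydisc, and the divisibility of $(\beta A)_j$ by $\gcd(a_j)$ coming from $\Gamma$-invariance --- which together upgrade the trivial bound $\beta a_j\geq 1$ to the sharp bound $\beta a_j\geq g(A)_j$. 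Everything else is routine bookkeeping with Taylor series.
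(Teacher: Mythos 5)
Your proposal is correct and follows essentially the same route as the paper's proof: expand $f$ in its Taylor series, use \eqref{eq-sigmasharp} and uniqueness of Taylor coefficients to see that every exponent in the support has the form $\beta A-\one$, then use the divisibility of $\beta a_j$ by $\gcd(a_j)$ together with $\beta a_j\geq 1$ to conclude $\beta a_j\geq \gcd(a_j)$ and factor out $z^{g(A)-\one}$. If anything, you are slightly more careful than the paper, which leaves implicit both the convergence of the quotient series defining $h$ and the observation that $g(A)-\one\succeq 0$.
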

\begin{proof} Let 
$\displaystyle{ f(z)=\sum_{\alpha\succeq 0}a_\alpha z^\alpha}$,
be the Taylor expansion of $f$. If $f$ is $\Gamma$ invariant, then we claim that for
each $\alpha$ such that $a_\alpha{\neq}0$, we have that $z^\alpha$ is $\Gamma$-invariant. Indeed using 
\eqref{eq-sigmasharp} we have
\[f=\sigma_\nu^\sharp f =\sigma_\nu^\sharp\left(\sum_{\alpha\succeq 0}a_\alpha\varphi_\alpha  \right)=\sum_{\alpha\succeq 0}
a_\alpha e^{2\pi i (\alpha A^{-1}+\one)\nu}\cdot \varphi_\alpha,\]
comparing this with the Taylor expansion of $f$ and equating coefficients the claim follows. Therefore, the Taylor expansion of $f$ is of the form
\begin{equation}
    \label{eq-looks}
    f(z)= \sum_{\substack{\beta\in (\Z^n)^\dagger\\ \beta A\succeq\one}} a_{\beta A-\one} z^{\beta A-\one}.
\end{equation}
Notice that $\displaystyle{z^{\beta A-\one} =\prod_{j=1}^n z_j^{\beta a_j-1} } $. The integer
$\beta a_j=\sum_{k=1}^n\beta_k a_j^k$ which occurs in the exponent is divisible by $\gcd(a_j)$. Further since $\sum_{k=1}^n\beta_k a_j^k\geq 1$ it follows that $\beta a_j\geq \gcd(a_j).$ It follows that the monomial $z^{\beta A-\one}$ is of the form
$z^{g(A)-\one}\cdot z^\gamma$, where $\gamma\succeq 0$. The corollary follows from \eqref{eq-looks}.

\end{proof}
\subsection{Proof of Proposition~\ref{prop-bounded}}\label{sec-bddproof}
By Theorem~\ref{thm-geometry}, $\Phi_A: \D^n_\slb\to\Uu $ is a proper holomorphic map of quotient type
with group $\Gamma$.
Therefore, thanks to Theorem~\ref{thm-transformation}, the Bergman projection
\[ \bm{B}_{\Uu}: L^p(\Uu)\to L^p(\Uu)\]
is bounded if and only if 
\begin{equation}
    \label{eq-bdnslb} \bm{B}_{\D^n_\slb} : \left[ L^p(\D^n_\slb , \abs{\det \Phi_A'}^{2-p}) \right]^{\Gamma} \to \left[ A^p(\D^n_\slb , \abs{\det \Phi_A'}^{2-p}) \right]^{\Gamma}
\end{equation}
is bounded. We therefore show that the  map \eqref{eq-bdnslb} is bounded if $2\leq p < \frac{2\kappa(\Uu)}{\kappa(\Uu)-1}$.

To do this we express the map of \eqref{eq-bdnslb} as a composition of three maps, and show that each of these three maps is continuous:

\begin{enumerate}[wide]
    \item We first show that we have an inclusion of Banach spaces
    \[ \left[L^p(\D^n_\slb , \abs{\det \Phi_A'}^{2-p}) \right]^{\Gamma} \subseteq  \left[L^p(\D^n_\slb)\right]^\Gamma,\] and that the inclusion map 
\begin{equation}
\label{eq-inclusion}
    \imath:  \left[ L^p(\D^n , \abs{\det \Phi_A'}^{2-p}) \right]^{\Gamma} \xhookrightarrow{} \left[L^p(\D^n)\right]^\Gamma
\end{equation}
is bounded.

It is clearly sufficient to show that there is a continuous inclusion of the full space  $ L^p(\D^n_\slb , \abs{\det \Phi_A'}^{2-p}) $  into $L^p(\D^n_\slb)$. 
For
$z\in \D^n$ we have for $p\geq 2$
\[ \abs{\det \Phi_A'(z)}^{2-p}= \abs{\det A z^{\one A-\one}}^{2-p}=\abs{\det A}^{2-p}\rho(z)^{(\one\cdot A-\one)(2-p)} \geq \abs{\det A}^{2-p}, \]
where we use the fact that the exponent of $\rho(z)$ is nonpositive, since $A$ has nonnegative integer entries and
$p\geq 2$.
Thus for any $p \geq 2$ and any $f \in L^p(\D^n_\slb , \abs{\det \Phi_A'}^{2-p}) $ we have
\begin{equation}
\label{eq-intinclusion}
    \int_{\D^n_\slb} \abs{f}^p dV \leq \frac{1}{\abs{ \det A}^{2-p}} \int_{\D^n_\slb} \abs{f}^p \abs{ \det \Phi_A'}^{2-p} dV,
\end{equation}
which proves the inclusion and its continuity.

\item Let $P_\Gamma= \left.\bm{B}_{\D^n_\slb}\right|_{\left[ L^p(\D^n_\slb) \right]^{\Gamma} }$ be the restriction of the Bergman projection operator  to the $\Gamma$-invariant functions. We claim that the operator of Banach spaces
\begin{equation}
    \label{eq-pgamma}
    P_\Gamma: \left[ L^p(\D^n_\slb) \right]^{\Gamma} \to \left[ A^p(\D^n_\slb) \right]^{\Gamma}
\end{equation}
is bounded. Indeed, we have that $L^p(\D^n_\slb)=L^p(\D^n)$ (since $\D^n_\slb$ is obtained from $\D^n$ by removing an analytic set $Z$, which is  of measure zero) and $ A^p(\D^n_\slb)= A^p(\D^n)$, in the sense that the analytic set $Z$ is a removable singularity of functions integrable in the $p$-th power, $p\geq 2$ (see \cite[p. 687 ]{belltransactions}). By Proposition~\ref{P:LpBergmanPolydisc}, the Bergman projection
maps $L^p(\D^n)$ to  $A^p(\D^n)$ boundedly.  Finally, by part (2) of Lemma~\ref{lem-inv}, the Bergman projection maps $\Gamma$-invariant functions to $\Gamma$-invariant functions. The boundedness of $P_\Gamma$ follows.
\item Finally, we show that there is an inclusion $\left[A^p(\D^n_\slb)\right]^\Gamma \subset \left[ A^p (\D^n_\slb, \abs{ \det \Phi_A'}^{2-p}) \right]^\Gamma$, and the inclusion map so determined is bounded.
As noted above $A^p(\D^n_\slb)=A^p(\D^n)$, so it will suffice to show that there is a continuous inclusion
\begin{equation}
    \label{eq-j} \jmath :\left[A^p(\D^n)\right]^\Gamma \xhookrightarrow{} \left[ A^p (\D^n, \abs{ \det \Phi_A'}^{2-p}) \right]^\Gamma.
\end{equation}
Let $f \in \left[A^p(\D^n)\right]^\Gamma$, so by Corollary~\ref{cor-invfun}, there exists $h \in \mathcal{O}(\D^n)$ such that 
\begin{equation}
    \label{eq-f}f(z)=z^{g(A)-\one}h(z).
\end{equation}
In fact, $h\in A^p(\D^n)$, since 
\[\int_{\D^n}\abs{h(z)}^pdV =\int_{\rho(z)\preceq \frac{1}{2}\one}\abs{h(z)}^pdV + \int_{\frac{1}{2}\one\prec \rho(z)\prec\one }\abs{h(z)}^pdV,\]
where the first of the two integrals is clearly finite and the second integral is
\[ =\int_{\frac{1}{2}\one\prec \rho(z)\prec\one }\abs{\frac{f(z)}{z^{g(A)-\one}}}^pdV\leq \left(\prod_{j=1}^n 2^{\gcd(a_j)-1} \right)^p\cdot\int_{\frac{1}{2}\one\prec \rho(z)\prec\one} \abs{f(z)}^pdV <\infty.  \]

Now we have that
\begin{align}
    \norm{f}_{A^p \left(\D^n, \abs{ \det \Phi_A'}^{2-p}\right)}^p&= \int_{\D^n} \abs{f(z)}^p \abs{ \det \Phi_A'(z)}^{2-p}dV(z)\nonumber\\ &= \int_{\D^n}\abs{h(z)}^p \abs{z^{g(A)-\one}}^p \abs{ \det A\cdot{z^{\one A-\one}}}^{2-p} dV(z) \nonumber\\
    & \text{ using \eqref{eq-f} and \eqref{eq-det2}} \nonumber \\
    &=\abs{ \det A }^{2-p}\cdot\int_{\D^n}\abs{h(z)}^p\abs{z^{p(g(A)-\one) + (2-p)(\one A - \one)}} dV(z)\nonumber\\
    &=\abs{ \det A }^{2-p}\cdot\int_{\D^n}\abs{h}^p\rho^{ p(g(A)-\one\cdot A) + 2\cdot\one\cdot A - 2\cdot\one}dV \label{eq-fnorm}
\end{align}
Combining hypothesis \eqref{eq-hypbounded} and \eqref{eq-upperbound},
\[p< \frac{2\kappa(\Uu)}{\kappa(\Uu)-1}= \min_{j} \frac{2\cdot \one\cdot a_j}{\one\cdot a_j- \gcd(a_j)},\] so for 
each $1\leq j \leq n$,
\[p< \frac{2\cdot\one\cdot a_j }{\one\cdot a_j-\gcd(a_j)}. \]
Therefore the $j$-th component of the exponent of $\rho$ in \eqref{eq-fnorm} is 
\[p(\gcd(a_j)-\one a_j) + 2\cdot\one\cdot a_j - 2> \left(\frac{2\cdot\one\cdot a_j }{\one\cdot a_j-\gcd(a_j)} \right)(\gcd(a_j)-\one\cdot a_j) + 2\one a_j - 2=-2,\]
where we have used the obvious fact that
$\gcd(a_j)-\one\cdot a_j \leq 0$. Therefore we have
\[ p(g(A)-\one A) + 2\cdot\one A - 2\cdot\one\succ - 2\cdot\one,\]
and so we have for constants $C_1, C_2$ independent of the function $f$:
\begin{align*}
    \eqref{eq-fnorm}&\leq C_1 \int_{\D^n}\abs{h}^pdV & \text{ by Lemma~\ref{lem-polydisc}}\\
    & \leq C_2 \int_{\D^n}\abs{z^{g(A)-\one}\cdot h(z)}^pdV(z) & \text{ by Lemma~\ref{lem-secondestimate}}\\
    &= C_2\int_{\D^n}\abs{f}^pdV.
\end{align*}
It follows that the inclusion $j$ of \eqref{eq-j} is continuous.
\end{enumerate}
Therefore, the map $\bm{B}_{\D^n_\slb}$ of Banach spaces in \eqref{eq-bdnslb} can be represented as a composition
\[\bm{B}_{\D^n_\slb}= \jmath\circ P_\Gamma \circ \imath\]
where $\imath$, $P_\Gamma$ and $\jmath$ are as in \eqref{eq-inclusion}, \eqref{eq-pgamma} and \eqref{eq-j} respectively
and each of which has already been shown to be continuous. The proof of Proposition~\ref{prop-bounded} is complete.
\section{Conclusion}
\subsection{End of proof of Theorem~\ref{thm-main}}
Recall that as a consequence of the self-adjointness of the Bergman projection on $L^2$, the set of $p$ for which the Bergman projection on  a domain is
$L^p$-bounded is Hölder-symmetric (see \cite{EdhMcN16, chakzeytuncu}), i.e., $p$ belongs 
to this set if and only if the conjugate index
$p'$ also belongs to it, where $\frac{1}{p'}+\frac{1}{p}=1$. Notice now that the
index conjugate to $\frac{2\kappa(\Uu)}{\kappa(\Uu)-1}$ is $\frac{2\kappa(\Uu)}{\kappa(\Uu)+1}$ . Now by combining Propositions~\ref{prop-bounded} and \ref{prop-unboundedness}, we see that for $p\geq 2$, the Bergman projection on $\Uu$ is bounded if and only if $p\in \left[2, \frac{2\kappa(\Uu)}{\kappa(\Uu)-1} \right).$
By Hölder symmetry, for $p\leq2$, the Bergman projection is bounded if and only if 
$p\in \left(\frac{2\kappa(\Uu)}{\kappa(\Uu)+1},2\right] $. The bounds \eqref{eq-bounds} and therefore Theorem~\ref{thm-main} is proved. 

\subsection{Comments and questions}
The methods used to prove Theorem~\ref{thm-main} are more general than the result itself, and apply to the Bergman projection on various quotient domains of
simple domains with known Bergman kernels, provided the quotient type proper holomorphic map is a monomial map. For example, we may deduce using a modification of our arguments, the range of $p$ for which the Bergman projection is bounded in $L^p$ on the domain 
\[\{(\abs{z_1}^2+\dots+\abs{z_{n-1}}^2)^\frac{k_1}{2}<\abs{z_n}^{k_2}<1\}\subset\cx^n, \]
where $k_1,k_2$ are positive integers. 

Thanks to  Theoren~\ref{thm-main}, the Bergman projection is no longer bounded in 
$L^p(\Uu)$ if 
$p\geq \frac{2\kappa(\Uu)}{\kappa(\Uu)-1}$. It is natural to ask if there is an alternate projection from $L^p(\Uu)$ to $A^p(\Uu)$ for such $p$. In the special case
$\Uu=H_{m/n}$, the generalized Hartogs triangle of \eqref{eq-genhart}, it is possible to construct for each $p\geq 2$ a 
\emph{sub-Bergman projection} which gives rise to a bounded projection on $L^p$. In a future work, we will study whether this generalizes to the general monomial polyhedron $\Uu$.

Finally, we would like to understand the precise geometric significance of the arithmetic complexity of $\Uu$ without reference to the representation in terms of the matrix $B$. Such a description will pave the way of generalizing the results of this paper to wider classes of domains.

\bibliographystyle{alpha}
\bibliography{monomial}

\begin{thebibliography}{CKMM20}

\bibitem[Axl88]{axler}
Sheldon Axler.
\newblock Bergman spaces and their operators.
\newblock In {\em Surveys of some recent results in operator theory, {V}ol.
  {I}}, volume 171 of {\em Pitman Res. Notes Math. Ser.}, pages 1--50. Longman
  Sci. Tech., Harlow, 1988.

\bibitem[Bar84]{barrett84}
David~E. Barrett.
\newblock Irregularity of the {B}ergman projection on a smooth bounded domain
  in {${\bf C}^{2}$}.
\newblock {\em Ann. of Math. (2)}, 119(2):431--436, 1984.

\bibitem[BC82]{bellcat}
Steven Bell and David Catlin.
\newblock Boundary regularity of proper holomorphic mappings.
\newblock {\em Duke Math. J.}, 49(2):385--396, 1982.

\bibitem[Beh33]{behnke1933}
Heinrich Behnke.
\newblock {Z}ur {T}heorie der {S}ingularit{\"a}ten der {F}unktionen mehrerer
  komplexen {V}er{\"a}nderlichen.
\newblock {\em Mathematische Annalen}, 108(1):91--104, 1933.

\bibitem[Bel81]{bellduke}
Steven~R. Bell.
\newblock Proper holomorphic mappings and the {B}ergman projection.
\newblock {\em Duke Math. J.}, 48(1):167--175, 1981.

\bibitem[Bel82]{belltransactions}
Steven~R. Bell.
\newblock The {B}ergman kernel function and proper holomorphic mappings.
\newblock {\em Trans. Amer. Math. Soc.}, 270(2):685--691, 1982.

\bibitem[B{\c{S}}12]{SonmezBarrett}
David Barrett and S{\"o}nmez {\c{S}}ahuto{\u{g}}lu.
\newblock Irregularity of the {B}ergman projection on worm domains in
  {$\mathbb{C}^n$}.
\newblock {\em Michigan Math. J.}, 61(1):187--198, 2012.

\bibitem[Cat80]{catlin}
David Catlin.
\newblock Boundary behavior of holomorphic functions on pseudoconvex domains.
\newblock {\em J. Differential Geom.}, 15(4):605--625 (1981), 1980.

\bibitem[CEM19]{ChEdMc19}
D.~Chakrabarti, L.~D. Edholm, and J.~D. McNeal.
\newblock Duality and approximation of {B}ergman spaces.
\newblock {\em Adv. Math.}, 341:616--656, 2019.

\bibitem[Cha19]{sibony}
Debraj Chakrabarti.
\newblock On an observation of {S}ibony.
\newblock {\em Proc. Amer. Math. Soc.}, 147(8):3451--3454, 2019.

\bibitem[Che17]{Chen17}
Liwei Chen.
\newblock The {$L^p$} boundedness of the {B}ergman projection for a class of
  bounded {H}artogs domains.
\newblock {\em J. Math. Anal. Appl.}, 448(1):598--610, 2017.

\bibitem[CJY20]{yuan2}
Liwei {Chen}, Muzhi {Jin}, and Yuan {Yuan}.
\newblock {Bergman projection on the symmetrized bidisk}.
\newblock {\em arXiv e-prints}, page arXiv:2004.02785, April 2020.

\bibitem[CKMM20]{pjm}
Debraj Chakrabarti, Austin Konkel, Meera Mainkar, and Evan Miller.
\newblock Bergman kernels of elementary {R}einhardt domains.
\newblock {\em Pacific J. Math.}, 306(1):67--93, 2020.

\bibitem[CKY20]{CKY19}
Liwei Chen, Steven~G. Krantz, and Yuan Yuan.
\newblock {$L^p$} regularity of the {B}ergman projection on domains covered by
  the polydisc.
\newblock {\em J. Funct. Anal.}, 279(2):108522, 20, 2020.

\bibitem[Col66]{collatz}
Lothar Collatz.
\newblock {\em Functional analysis and numerical mathematics}.
\newblock Translated from the German by Hansj\"{o}rg Oser. Academic Press, New
  York-London, 1966.

\bibitem[CZ16]{chakzeytuncu}
Debraj Chakrabarti and Yunus~E. Zeytuncu.
\newblock {$L^p$} mapping properties of the {B}ergman projection on the
  {H}artogs triangle.
\newblock {\em Proc. Amer. Math. Soc.}, 144(4):1643--1653, 2016.

\bibitem[DF82]{difo}
Klas Diederich and John~Erik Fornaess.
\newblock Boundary regularity of proper holomorphic mappings.
\newblock {\em Invent. Math.}, 67(3):363--384, 1982.

\bibitem[DF04]{dummit}
David~S. Dummit and Richard~M. Foote.
\newblock {\em Abstract algebra}.
\newblock John Wiley \& Sons, Inc., Hoboken, NJ, third edition, 2004.

\bibitem[DS04]{durenbergman}
Peter Duren and Alexander Schuster.
\newblock {\em Bergman spaces}, volume 100 of {\em Mathematical Surveys and
  Monographs}.
\newblock American Mathematical Society, Providence, RI, 2004.

\bibitem[Edh16]{Edh16}
Luke~D. Edholm.
\newblock {B}ergman theory of certain generalized {H}artogs triangles.
\newblock {\em Pacific J. Math.}, 284(2):327--342, 2016.

\bibitem[EM16]{EdhMcN16}
L.~D. Edholm and J.~D. McNeal.
\newblock The {B}ergman projection on fat {H}artogs triangles: ${L}^p$
  boundedness.
\newblock {\em Proc. Amer. Math. Soc.}, 144(5):2185--2196, 2016.

\bibitem[EM17]{EdhMcN16b}
L.~D. Edholm and J.~D. McNeal.
\newblock Bergman subspaces and subkernels: degenerate ${L}^p$ mapping and
  zeroes.
\newblock {\em J. Geom. Anal.}, 27(4):2658--2683, 2017.

\bibitem[EM20]{EdhMcN20}
L.~D. Edholm and J.~D. McNeal.
\newblock Sobolev mapping of some holomorphic projections.
\newblock {\em To appear in J. Geom. Anal.}, 2020.

\bibitem[FR75]{rudin}
Frank Forelli and Walter Rudin.
\newblock Projections on spaces of holomorphic functions in balls.
\newblock {\em Indiana Univ. Math. J.}, 24:593--602, 1974/75.

\bibitem[Gr{\"{u}}03]{gb}
Branko Gr{\"{u}}nbaum.
\newblock {\em Convex polytopes}, volume 221 of {\em Graduate Texts in
  Mathematics}.
\newblock Springer-Verlag, New York, second edition, 2003.
\newblock Prepared and with a preface by Volker Kaibel, Victor Klee and
  G\"{u}nter M. Ziegler.

\bibitem[Hed02]{Hed02}
H.~Hedenmalm.
\newblock The dual of a {B}ergman space on simply connected domains.
\newblock {\em J. Anal. Math}, 88(311--335), 2002.

\bibitem[HKZ00]{zhubergman}
Haakan Hedenmalm, Boris Korenblum, and Kehe Zhu.
\newblock {\em Theory of {B}ergman spaces}, volume 199 of {\em Graduate Texts
  in Mathematics}.
\newblock Springer-Verlag, New York, 2000.

\bibitem[HS80]{hakimsibony}
Monique Hakim and Nessim Sibony.
\newblock Spectre de {$A(\bar \Omega )$}\ pour des domaines born{\'e}s
  faiblement pseudoconvexes r{\'e}guliers.
\newblock {\em J. Funct. Anal.}, 37(2):127--135, 1980.

\bibitem[HS00]{silverman}
Marc Hindry and Joseph~H. Silverman.
\newblock {\em Diophantine geometry}, volume 201 of {\em Graduate Texts in
  Mathematics}.
\newblock Springer-Verlag, New York, 2000.
\newblock An introduction.

\bibitem[Huo18]{huo1}
Zhenghui Huo.
\newblock {$L^p$} estimates for the {B}ergman projection on some {R}einhardt
  domains.
\newblock {\em Proc. Amer. Math. Soc.}, 146(6):2541--2553, 2018.

\bibitem[HW19]{wick}
Zhenghui {Huo} and Brett~D. {Wick}.
\newblock {Weak-type estimates for the Bergman projection on the polydisc and
  the Hartogs triangle}.
\newblock {\em arXiv e-prints}, page arXiv:1909.05902, Sep 2019.

\bibitem[JP08]{jarpflubook}
Marek Jarnicki and Peter Pflug.
\newblock {\em First steps in several complex variables: {R}einhardt domains}.
\newblock EMS Textbooks in Mathematics. European Mathematical Society (EMS),
  Z{\"u}rich, 2008.

\bibitem[Kob59]{kobayashi}
Shoshichi Kobayashi.
\newblock Geometry of bounded domains.
\newblock {\em Trans. Amer. Math. Soc.}, 92:267--290, 1959.

\bibitem[KP08]{KrantzPeloso08}
Steven~G. Krantz and Marco~M. Peloso.
\newblock Analysis and geometry on worm domains.
\newblock {\em J. Geom. Anal.}, 18(2):478--510, 2008.

\bibitem[Kra13]{krantzbergman}
Steven~G. Krantz.
\newblock {\em Geometric analysis of the {B}ergman kernel and metric}, volume
  268 of {\em Graduate Texts in Mathematics}.
\newblock Springer, New York, 2013.

\bibitem[Mas91]{massey}
William~S. Massey.
\newblock {\em A basic course in algebraic topology}, volume 127 of {\em
  Graduate Texts in Mathematics}.
\newblock Springer-Verlag, New York, 1991.

\bibitem[MS94]{McNSte94}
J.~D. McNeal and E.~M. Stein.
\newblock Mapping properties of the {B}ergman projection on convex domains of
  finite type.
\newblock {\em Duke Math. J.}, 73(1):177--199, 1994.

\bibitem[MSRZ13]{misra}
Gadadhar Misra, Subrata Shyam~Roy, and Genkai Zhang.
\newblock Reproducing kernel for a class of weighted {B}ergman spaces on the
  symmetrized polydisc.
\newblock {\em Proc. Amer. Math. Soc.}, 141(7):2361--2370, 2013.

\bibitem[Mun84]{munkres}
James~R. Munkres.
\newblock {\em Elements of algebraic topology}.
\newblock Addison-Wesley Publishing Company, Menlo Park, CA, 1984.

\bibitem[NP09]{nagelduke}
Alexander Nagel and Malabika Pramanik.
\newblock Maximal averages over linear and monomial polyhedra.
\newblock {\em Duke Math. J.}, 149(2):209--277, 2009.

\bibitem[NP20]{nagelpramanik}
Alexander Nagel and Malabika Pramanik.
\newblock Bergman spaces under maps of monomial type.
\newblock {\em The Journal of Geometric Analysis}, 2020.

\bibitem[Par18]{park}
Jong-Do Park.
\newblock The explicit forms and zeros of the {B}ergman kernel for
  3-dimensional {H}artogs triangles.
\newblock {\em J. Math. Anal. Appl.}, 460(2):954--975, 2018.

\bibitem[PS77]{PhoSte77}
D.~H. Phong and E.~M. Stein.
\newblock Estimates for the {B}ergman and {S}zeg\"o projections on strongly
  pseudo-convex domains.
\newblock {\em Duke Math. J.}, 44(3):695--704, 1977.

\bibitem[Sib75]{sibony1975}
Nessim Sibony.
\newblock Prolongement des fonctions holomorphes born{\'e}es et m{\'e}trique de
  {C}arath{\'e}odory.
\newblock {\em Invent. Math.}, 29(3):205--230, 1975.

\bibitem[Vla66]{vladi}
Vasili\u{\i}~Sergeevi\v{c} Vladimirov.
\newblock {\em Methods of the theory of functions of many complex variables}.
\newblock Translated from the Russian by Scripta Technica, Inc. Translation
  edited by Leon Ehrenpreis. The M.I.T. Press, Cambridge, Mass.-London, 1966.

\bibitem[Zha19]{zhang2}
Shuo Zhang.
\newblock {Mapping properties of the Bergman projections on elementary
  Reinhardt domains}.
\newblock Submitted, 2019.

\bibitem[Zha20]{zhang1}
Shuo Zhang.
\newblock {$L^p$ boundedness for the Bergman projections over $n$-dimensional
  generalized Hartogs triangles}.
\newblock {\em Complex Variables and Elliptic Equations}, Published
  electronically: \url{ https://doi.org/10.1080/17476933.2020.1769085 }:1--18,
  2020.

\bibitem[Zie95]{ziegler}
G\"{u}nter~M. Ziegler.
\newblock {\em Lectures on polytopes}, volume 152 of {\em Graduate Texts in
  Mathematics}.
\newblock Springer-Verlag, New York, 1995.

\bibitem[ZJ64]{ZahJud64}
V.P. Zaharjuta and V.I. Judovic.
\newblock The general form of a linear functional on ${H}_p'$.
\newblock {\em Uspekhi Mat. Nauk.}, 19(2):139--142, 1964.

\bibitem[Zwo99]{zwonek99}
W\l~odzimierz Zwonek.
\newblock On hyperbolicity of pseudoconvex {R}einhardt domains.
\newblock {\em Arch. Math. (Basel)}, 72(4):304--314, 1999.

\bibitem[Zwo00]{zwonekhab}
W\l~odzimierz Zwonek.
\newblock Completeness, {R}einhardt domains and the method of complex geodesics
  in the theory of invariant functions.
\newblock {\em Dissertationes Math. (Rozprawy Mat.)}, 388:103, 2000.

\end{thebibliography}
\end{document}